\documentclass{amsart}

\usepackage{amssymb,amsmath}
\usepackage{amsthm}
\usepackage{fancyhdr}
\usepackage{tikz}
\usetikzlibrary{decorations.markings}

\newcommand{\RR}{{\mathbb R}}
\newcommand{\CC}{{\mathbb C}}

\newcommand{\HH}{\mathcal{H}}
\newcommand{\Jq}{\langle q \rangle}

\DeclareMathOperator{\sech}{sech}
\DeclareMathOperator{\Tanh}{tanh}

\newtheorem{theorem}{Theorem}
\newtheorem{corollary}{Corollary}
\newtheorem{lemma}{Lemma} 
\newtheorem{proposition}{Proposition}

\newtheorem{remark}{Remark}

\begin{document}

\title[]{Global solution of the Wadati-Konno-Ichikawa equation with small initial data}
\author{Yusuke Shimabukuro} 
\address{ Institute of Mathematics, Academia Sinica, Taipei, Taiwan, 10617}
\email{shimaby@gate.sinica.edu.tw}

\date{\today}
\maketitle

\begin{abstract}
We show the existence of global solution for the Wadati-Konno-Ichikawa (WKI) equation with small initial data in the smooth function space. Our approach is based on the scattering and inverse scattering maps in the weighted Sobolev spaces for the WKI spectral problem. In addition, we derive one soliton solution as well as a bursting soliton from our matrix Riemann-Hilbert problem.
\end{abstract}

\section{Introduction}
The paper addresses the initial value problem of the Wadati-Konno-Ichikawa (WKI) equation,
\begin{equation} \label{WKI}
\left\{ \begin{array}{l}
iq_t+\left(\frac{q}{\sqrt{1+|q|^2}}\right)_{xx}=0, \\
\left. q\right|_{t=0}=q_0, \end{array} \right. 
\end{equation}
where $q(x,t)$ is a complex-valued function on $(x,t)\in \RR\times [0,\infty)$. The WKI equation is integrable and has an infinite number of conserved quantities. The first two conserved quantities of the WKI equation are 
$$E_1=\int_{\RR}(\sqrt{1+|q|^2}-1)dx,$$
$$E_2=\int_{\RR}( \frac{1}{2}\frac{(|q|^2)_x}{1+|q|^2}+\frac{q_x}{q}\frac{1-\sqrt{1+|q|^2}}{\sqrt{1+|q|^2}})dx,$$
satisfying $\frac{d}{dt}E_j=0$, $j=1,2$.

The two WKI equations are introduced in \cite{Wadati-Konno-Ichikawa-1979}. The first equation is \eqref{WKI} and the second equation is 
\begin{equation} \label{WKI-2}
q_t+\left(\frac{q_x}{(1+|q|^2)^{3/2}}\right)_{xx}=0,
\end{equation}
which can be derived to model oscillation of elastic beams \cite{Ichikawa-Konno-Wadati-1981}. 

Shimizu and Wadati first studied the WKI equation \eqref{WKI} by the inverse scattering transformation \cite{Shimizu-Wadati-1980}. They discovered that one soliton solution with a particular parameter becomes a bursting soliton that has infinite hight locally in space. Wadati, Konno, and Ichikawa considered a modified version of \eqref{WKI-2} and obtained a loop soliton solution \cite{Konno-Ichikawa-Wadati-1981-2}.
Around the same time, the same authors showed that
$$q_t-2\left(\frac{1}{\sqrt{1+q}}\right)_{xxx}=0$$
is integrable and has a cusp soliton solution \cite{Wadati-Ichikawa-Shimizu-1980}, whose slope is infinite at some point.

The above integrable equations admit the exotic soliton solutions that are not smooth. Other well-known soliton equations of this type include the Camassa-Holm equation, the Degasperis-Procesi equation, the Harry-Dym equation, the Short-Pulse equation and so fourth. Numerous results on well-posedness and wave breaking are obtained in recent years among these equations. We only list a shortlist. See \cite{Constantin-Escher-1998} for the Camassa-Holm equation by Constantin and Escher, \cite{Liu-Yin-2006} for the Degasperis-Procesi equation by Liu and Yin, and \cite{Pelinovsky-Sakovich-2010,Liu-Pelinovsky-Sakovich-2009} for the Short-Pulse equation by Liu, Pelinovsky and Sakovich.  
What above results have in common is that the global well-posedness can be achieved with some initial profile but not every initial profile in the same function space, since there exists an initial profile such that slope of a corresponding solution becomes infinity at some point in a finite time.
 
The WKI equation admits a bursting soliton whose maximum amplitude is infinity. This type of singularity differs with peakon type solutions. 
Natural questions rise for the WKI equation what conditions are required for the global existence of solution and whether blow-up phenomena can happen or not. In this paper, we address the first question by the inverse scattering method. We also derive one soliton solution as well as a bursting soliton by the framework of our matrix Riemann-Hilbert problem that appears to be simpler than the original derivation \cite{Shimizu-Wadati-1980}. More exact soliton solutions will be reported in our follow-up work. 

Solution of the WKI equation appears in the Wadati-Konno-Ichikawa spectral problem,  
\begin{equation} \label{WKI-spectral}
\psi_x=[i\lambda \sigma_3-\lambda \mathcal{M}]\psi, \quad \mathcal{M}=\begin{pmatrix} 0 & q\\ -\bar{q} & 0 \end{pmatrix},
\end{equation} 
where $\lambda \in \CC$.
Formally, given the time evolution 
\begin{equation} \label{WKI-spectral-t}
\psi_t=[-2i\lambda^2\sigma_3-2i\frac{1-\sqrt{1+|q|^2}}{\sqrt{1+|q|^2}}\lambda^2\sigma_3+2\frac{1}{\sqrt{1+|q|^2}}\lambda^2 \mathcal{M}-i\lambda \; \mathcal{N}]\psi 
\end{equation}
where $ \mathcal{N}=\begin{pmatrix} 0 & \left(\frac{q}{\sqrt{1+|q|^2}}\right)_x\\ \left(\frac{\bar{q}}{\sqrt{1+|q|^2}}\right)_x & 0 \end{pmatrix}$, $\partial_{xt}\psi=\partial_{tx}\psi$ gives the WKI equation \eqref{WKI}.

The inverse scattering transform by Shimizu and Wadati \cite{Shimizu-Wadati-1980} is based on the Gelfand-Levitan equation under assumption of compact support for the potential. Boiti, Gerdjikov, and Pempinelli studied the squared solutions of the WKI spectral problem with the potential in the Schwartz function space. A relationship between the Ablowitz-Kaup-Newell-Segur (AKNS) problem and the WKI problem is shown in \cite{Ishimori-1982} by Ishimori. One soliton solution is derived for the WKI evolution equations by using the known solitons of the AKNS evolution equations. This relationship between the ANKS and WKI spectral problems is reported by Wadati and Sogo \cite{Wadati-Sogo-1983} as well. However, it is hard to tell whether existence of solution for a AKNS evolution equation implies that of solution for a WKI evolution equation since the latter solution contains the inverse power of a ANKS solution, e.g., in \cite{Ishimori-1982} (equation (3.6)). Furthermore, in \cite{Wadati-Sogo-1983} (equation (4.9)), the Isotropic Heisenberg equation is related to the WKI equation whose solution contains the inverse power of the former. This is not the case for relation of the Isotropic Heisenberg equation to the nonlinear Schr\"{o}dinger equation. 

We study the scattering and inverse scattering maps for the potential $q$ in weighted Sobolev spaces. 
We follow the machinery of the Riemann-Hilbert approach by Deift and Zhou \cite{Deift-Zhou-1991, Zhou-1989, Zhou-1998}. Their work on long-time asymptotics of the defocusing NLS \cite{Deift-Zhou-2002} also contains many important details of the method. 
 
 At present we are unable to close regularities of the potential $q$ in a single space under the scattering and inverse scattering maps. We, therefore, require the infinite smoothness of the potential $q$ in order to conclude the solvability of the WKI equation by the inverse scattering transform. Existence of local solution of \eqref{WKI} can be obtained in the same smooth space. We denote $L^2(\RR)$ as the space of square integrable functions equipped with the norm,
$$\|f\|_{L^2(\RR)}:=\left(\int_{\RR}|f(x)|^2dx\right)^{1/2}.$$  
We will denote $X_n$ as the function space defined by
$$X_n:=\{ f \in L^2(\RR) :
\langle x\rangle \partial_x^jf
\in L^2(\RR)   \quad \forall j=0,1,\cdots,n\},$$
where $\langle x\rangle=\sqrt{1+x^2}$, and $\partial_x$ is the weak partial derivative.
When $n=\infty$, we have
$$X_{\infty}=\{f\in L^2(\RR): \| \langle x \rangle \partial_x^j f \|_{L^2} < \infty \quad \forall j \in \mathbb{N} \}.$$
Every function in $X_{\infty}$ is smooth and any $j$ th derivative of a function with a weight $\langle x\rangle$ belongs to $L^2$. 

From Theorems \ref{b-a-regularity} and \ref{construct-q}, the potential $q \in X_{n+1}$ after a composition of the scattering and the inverse scattering maps is 
$$X_{n+1}\ni q \mapsto q\in X_{n-1}, \quad n\geq 2.$$

As Corollary, the following result is obtained.
\begin{theorem}\label{main-theorem}
If $q_0 \in X_{\infty}$ and 
$$\|\langle x \rangle q_0\|_{L^2}+\|\langle x\rangle \partial_xq_0\|_{L^2}$$
is sufficiently small, then there exists a unique solution $q(\cdot,t)\in X_{\infty}$ of the initial value problem \eqref{WKI} for all $t \in \RR^+$. 
\end{theorem}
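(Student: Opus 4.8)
The plan is to construct the solution by the inverse scattering transform and then read off the stated conclusion from the regularity and solvability already contained in Theorems \ref{b-a-regularity} and \ref{construct-q}. First I would apply the direct scattering map to $q_0$. Since $q_0\in X_\infty\subset X_{n+1}$ for every $n$, and since the smallness of $\|\langle x\rangle q_0\|_{L^2}+\|\langle x\rangle\partial_xq_0\|_{L^2}$ keeps the Jost coefficient $a(\lambda)$ bounded away from zero, the spectral problem \eqref{WKI-spectral} carries no discrete eigenvalues and the reflection coefficient $r(\lambda,0)=b(\lambda,0)/a(\lambda)$ is a well-defined element of the weighted Sobolev space furnished by Theorem \ref{b-a-regularity}. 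The absence of eigenvalues is what reduces the reconstruction to a pole-free Riemann--Hilbert problem, and this is exactly where the smallness hypothesis is consumed.

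Next I would propagate the scattering data. Taking the $x\to\pm\infty$ limit of \eqref{WKI-spectral-t}, where $q\to 0$ and $\sqrt{1+|q|^2}\to 1$, the evolution of the Jost solutions is governed by $\psi_t\to -2i\lambda^2\sigma_3\psi$, so the off-diagonal scattering coefficient acquires the \emph{unimodular} factor $e^{-4i\lambda^2 t}$ while $a(\lambda)$ is conserved. Hence $r(\lambda,t)=r(\lambda,0)\,e^{-4i\lambda^2 t}$ and in particular $|r(\lambda,t)|=|r(\lambda,0)|$ for all $t$. The decisive consequence is that the quantity controlling solvability of the inverse problem --- essentially $\|r(\cdot,t)\|_{L^\infty}$ together with the $L^2$ size measured in Theorem \ref{b-a-regularity} --- is \emph{time independent}, so the smallness inherited from $q_0$ persists for every $t\in\RR^+$ and the Riemann--Hilbert problem of Theorem \ref{construct-q} remains solvable with a $t$-independent threshold. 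I would then set $q(\cdot,t):=\mathcal I\big(r(\cdot,t)\big)$, the potential reconstructed from the time-evolved data by the inverse map of Theorem \ref{construct-q}.

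For the regularity I would run the round-trip estimate at each level. For each fixed $n\geq 2$ the composition bound $X_{n+1}\ni q\mapsto q\in X_{n-1}$ applies to the evolved data, giving $q(\cdot,t)\in X_{n-1}$; since $q_0\in X_{n+1}$ for \emph{every} $n\geq 2$, intersecting over $n$ yields $q(\cdot,t)\in\bigcap_{m}X_m=X_\infty$ at each $t$. This is precisely the role of the $X_\infty$ hypothesis: it absorbs the two-derivative loss of the scattering/inverse-scattering round trip at all regularity orders simultaneously. The higher weighted $\lambda$-derivatives of $r(\cdot,t)$ do grow polynomially in $t$ through the phase $e^{-4i\lambda^2 t}$, but this feeds only into the $t$-dependent size of the $X_n$-norms and not into membership or solvability, so the conclusion $q(\cdot,t)\in X_\infty$ holds for all $t$ without a uniform bound being claimed.

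Finally, uniqueness would follow from the local well-posedness in $X_\infty$ mentioned above: any local $X_\infty$-solution of \eqref{WKI} has scattering data obeying the same linear law, so it coincides with the inverse-scattering solution on its maximal interval and hence extends to all of $\RR^+$. I expect the main obstacle to lie not in any single estimate, which are supplied by Theorems \ref{b-a-regularity} and \ref{construct-q}, but in certifying that the hypotheses of Theorem \ref{construct-q} survive the flow for \emph{every} $t$ rather than only for small $t$: one must confirm that solvability is driven solely by the time-invariant modulus $|r|$ while the growing phase part contributes only to the $t$-dependent $X_n$-sizes, and then verify through the Lax-pair compatibility of \eqref{WKI-spectral}--\eqref{WKI-spectral-t} that the reconstructed $q$ genuinely satisfies \eqref{WKI}. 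This last point is the delicate one for WKI, since the spectral parameter $\lambda$ multiplies the potential in \eqref{WKI-spectral} and the nonlinearity enters through the inverse power $q/\sqrt{1+|q|^2}$, so extracting the PDE from the large-$\lambda$ expansion of the Riemann--Hilbert solution is more involved than in the AKNS/NLS setting and must be reconciled with the $\lambda$-dependent normalization of the potential.
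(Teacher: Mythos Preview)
Your proposal is correct and assembles the same pieces as the paper --- Theorems \ref{b-a-regularity} and \ref{construct-q}, the linear evolution of $r$, and local well-posedness in $X_\infty$ --- but the logical order differs in one respect that matters. You propose to \emph{define} $q(\cdot,t)$ by inverse scattering and then verify that it solves \eqref{WKI}, correctly flagging that verification as the delicate step for WKI. The paper avoids it entirely: it first obtains a local $X_\infty$ solution on $[0,T)$ by a direct fixed-point argument on the integral form of \eqref{WKI}, observes that \emph{its} scattering data evolves as $r(\lambda,t)=r(\lambda)e^{4i\lambda^2 t}$, and then invokes Theorem \ref{construct-q} purely as an a priori bound on this already-existing solution, giving $q(\cdot,t)\in X_m$ for every $m$ and every $t\geq 0$, hence $T=\infty$. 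Your own uniqueness paragraph already contains this shortcut (identifying the IST construction with the local solution on its interval of existence); if you lead with local existence rather than with the IST construction, the large-$\lambda$ PDE verification you worry about becomes unnecessary. The paper's ordering buys exactly that economy; your ordering, if the verification were carried out, would yield a self-contained IST proof independent of any separate local theory, at the cost of more work. (A minor sign: the paper has $e^{+4i\lambda^2 t}$, not $e^{-4i\lambda^2 t}$, but this is immaterial to the argument.)
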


The solution implied in Theorem \ref{main-theorem} has no soliton solution because of the smallness condition on an initial data. It is worth mentioning that Theorem \ref{main-theorem} requires infinite smoothness of an initial data, and it would be interesting to see existence of local/global solution in lower regularity for the WKI equation. 

We note that our work can be readily extended to the study of long-time asymptotic solution by using the steepest descent method. We expect that the leading order decays in the order of $1/\sqrt{t}$, since the WKI equation is formally written as the nonlinear Schrodinger type equation if $|q|$ is small. As for other evolution equations from the WKI hierarchy, the long-time asymptotic solution of the Short-Pulse equation is recently reported in \cite{Xu-2016}. 

In the scattering problem, there are mainly three important steps for technical aspects, (i) transformation of the WKI spectral problem to the ANKS spectral problem for existence, analyticity, and estimates of the fundamental solutions, (ii) the change of spectral parameter $\lambda \rightarrow -\frac{1}{\lambda}$ for correct normalization, (iii) the change of the spatial variable that depends on the dependent variable to eliminate the potential $q$ in the jump condition of the Riemann-Hilbert problem. 
This type of transformation in (iii) is called the reciprocal transformation or the hodograph transformation in many literatures, e.g., see \cite{Feng-Inoguchi-Kajiwara-Maruno-2011} for integrable discretizations of the WKI elastic beam equation \eqref{WKI-2}. 



The paper is organized as follows.

 In Section \ref{WKI-AKNS}, we transform the WKI spectral problem to the AKNS type spectral problem. From the ANKS type spectral problem, we obtain various results and relate them back to the WKI spectral problem. 
 
 In Section \ref{Scattering-coefficients}, we introduce the scattering coefficients $a(\lambda)$ and $b(\lambda)$ and give their regularity based on regularity of the potential $q\in X_n$ in Theorem \ref{b-a-regularity}. 
 
 In Section \ref{inverse-transformation}, we make the change of the spectral parameter, $\lambda \mapsto -\frac{1}{\lambda}$, and give the sectionally analytic matrix function \eqref{normalized-m} that is normalized to identity at infinity.  
 
 In Section \ref{change-space-variable}, we introduce the change of the spatial variable to address an appearance of the potential $q$ in the jump condition \eqref{jump-RHP}. The new spatial variable will be denoted as $x_{\HH}$ and the new potential as $q_{\HH}$ that depends on $x_{\HH}$. We finally give the jump condition \eqref{jump-RHP-2} for the Riemann-Hilbert problem, used in the inverse problem. 
 
In Section \ref{RHP-section}, we recover and estimate the potential $q_{\HH}$ on the variable $x_{\HH}$ in Lemma \ref{qH-construct} for every $t\geq 0.$ Finally, we estimate the potential $q$ on the original variable $x$ in Theorem \ref{construct-q} for every $t\geq 0$. 

In Section \ref{soliton}, we derive one soliton solution by using our matrix Riemann-Hilbert formulation. \\


\emph{Notations}. We define $|M|:= \left(\sum_{ij}|M_{ij}|^2\right)^{1/2}$ for a $n\times m$ matrix $M$ with an $(i,j)$ th element $M_{ij}$. If $M(z)$ is a matrix function on $z\in \Sigma$ for some contour $\Sigma\subset \CC$, then $\| M(x)\|_{X}:=\| |M(x)|\|_{X}$, where $\|\cdot\|_{X}$ is the function norm for the function space $X$, e.g., the $L^2$ space on $\Sigma$. We denote $\langle x\rangle=\sqrt{1+x^2}$. When we write $L^2(\langle x \rangle^2 dx)$, we mean the weighted $L^2$ space and its norm $\| f\|_{L^2(\langle x \rangle^2dx)}=\| \langle x \rangle f\|_{L^2}$.

We frequently use the third pauli matrix $\sigma_3=\begin{pmatrix} 1 & 0 \\ 0 & -1 \end{pmatrix}$, e.g., $a^{\sigma_3}=\begin{pmatrix} a & 0 \\ 0 & a^{-1} \end{pmatrix}$ for $a\in \CC$.

\section{Transformation to the AKNS-type system} \label{WKI-AKNS}
From now on, we adapt the Japanese bracket notation $\langle \cdot \rangle$ for the potential $q$, i.e., $\langle q \rangle:= \sqrt{1+|q|^2}$. 
We perform the standard eigen decomposition for the operator $\mathcal{L}=-i\lambda \sigma_3+\lambda \mathcal{M}$,
\begin{equation} \label{eig-decomp}
\mathcal{L}=\mathcal{G}\begin{pmatrix} i\lambda\Jq & 0\\ 0 &  -i\lambda\Jq\end{pmatrix} \mathcal{G}^{-1},
\end{equation}
where 
\begin{equation} \label{G-matrix}
\mathcal{G}=\frac{1}{\sqrt{2}(\Jq^2+\Jq)^{1/2}}\begin{pmatrix}  1+\Jq & -iq \\ -i\bar{q} & 1+\Jq \end{pmatrix}.
\end{equation}
We observe that $\mathcal{G}$ invertible since $\det(\mathcal{G})=1$. We introduce the new matrix function
\begin{equation} \label{G-transformation}
\phi_1=\mathcal{G}^{-1}\psi
\end{equation}
and expressing \eqref{WKI-spectral} in terms of $\phi_1$ gives 
\begin{equation} \label{WKI-spectral-2}
\partial_x\phi_1=[\sigma_3 i\lambda \Jq+ \mathcal{V}_1]\phi_1, \quad \mathcal{V}_1=-\mathcal{G}^{-1}\mathcal{G}_x= \begin{pmatrix} B & Q \\ -\bar{Q} & \bar{B} \end{pmatrix},
\end{equation}
where 
$$Q= \frac{-i}{4(\Jq^2+\Jq)}(q\Jq_x -q_x(1+\Jq)), \quad B= \frac{1}{4}\frac{q_x\bar{q}-q\bar{q}_x}{\Jq^2+\Jq}.$$ 
We observe that $B$ is purely imaginary. We shall apply the gauge transformation to eliminate the diagonal elements in $\mathcal{V}_1$, 
\begin{equation}\label{gauge-transformation}
\phi_2=g_{x_0}\phi_1,
\end{equation}
with 
$$g_{x_0}=\exp\left( -\sigma_3 \int_{x_0}^xB(y)dy\right).$$
The subscript $x_0$ is written to keep track of limits $x_0=+\infty$ or $x_0=-\infty$.
This gauge transformation transforms \eqref{WKI-spectral-2} into
\begin{equation} \label{WKI-spectral-3}
\partial_x\phi_2=[\sigma_3 i\lambda\Jq + \mathcal{V}_2]\phi_2, \quad \mathcal{V}_2= g_{x_0}\begin{pmatrix}0 & Q \\ -\bar{Q} & 0 \end{pmatrix}g_{x_0}^{-1}.
\end{equation}
Observe that $\mathcal{V}_2$ is an off-diagonal matrix. The spectral problem \eqref{WKI-spectral-3} is an analogue to the ANKS system. Lastly, we introduce $\varphi$ to normalize $\phi_2$ as follows
\begin{equation}\label{gauge-transformation-2}
\varphi=\phi_2 e^{-i\lambda \sigma_3p_{x_0}(x)}
\end{equation}
with
\begin{equation} \label{p}
p_{x_0}(x)=x + \int_{x_0}^x(\Jq-1)dy
\end{equation}
where we note that $\int_{\RR}(\Jq-1)dx$ is a conserved quantity. 
From \eqref{WKI-spectral-3}, the integral equation for $\varphi$ with the initial condition $\varphi=I$ at $x=x_0$ is given as
\begin{equation}\label{mu-integral}
\varphi=I + \int_{x_0}^{x}e^{i\lambda(p_{x_0}(x)-p_{x_0}(y)) \sigma_3}[\mathcal{V}_2\varphi] e^{-i\lambda (p_{x_0}(x)-p_{x_0}(y))\sigma_3} dy.
\end{equation}
For solution $\varphi$ corresponding with $x_0=\pm \infty$, we simply denote $\varphi_{\pm}$ for $x_0=\pm\infty$ above.

We easily get the following estimate on the entry of $\mathcal{V}_2$, if $q\in H^1$ and $q_x \in L^1$, then
\begin{align}\label{estimate-V}
\|Q\|_{L^1} &\leq \frac{1}{8}(\|q^2q_x\|_{L^1}+\|q_x\|_{L^1}+\|q_x\Jq\|_{L^1})
\end{align}
is bounded.
 We see that $X_1$ regularity for $q$ is sufficient to have a bounded estimate above since if $q\in X_1$, then $q \in H^1$ and $q_x\in L^1$. Furthermore, the integral that appears in $p_{x_0}$ of \eqref{p} is bounded as 
 $$\int_{\RR}(\sqrt{1+|q|^2}-1)dx \leq \int_{\RR}|q|dx,$$
 where we used the fact that $\sqrt{1+s^2}-1 \leq s$ for $s \geq 0$.\\
 
We first address existence and uniqueness of bounded solutions $\varphi_{\pm}(\cdot;\lambda)$ of \eqref{mu-integral} for every fixed $\lambda \in \RR$. This follows from the fixed-point argument. If $q\in X_1$, there exist a finite number of sub-intervals of $\RR$ such that $Q$ is sufficiently small in $L^1$ norm in each sub-interval for the argument to work. By uniqueness, gluing solutions together, we deduce a unique bounded solutions $\varphi_{\pm}(\cdot,\lambda)$ for every $\lambda \in \RR$.
\begin{proposition} \label{existence-mu}
If $q\in X_1$, then there exists unique $L^{\infty}_x$ solution $\varphi_{\pm}$ in \eqref{mu-integral} for every $\lambda \in \RR$.
\end{proposition}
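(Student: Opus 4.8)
The plan is to read \eqref{mu-integral} as the fixed-point equation $\varphi = T\varphi$, where
$$
(T\varphi)(x) = I + \int_{x_0}^{x} e^{i\lambda(p_{x_0}(x)-p_{x_0}(y))\sigma_3}\,[\mathcal{V}_2(y)\varphi(y)]\,e^{-i\lambda(p_{x_0}(x)-p_{x_0}(y))\sigma_3}\,dy,
$$
and to solve it by Picard iteration in the Banach space of bounded continuous $2\times 2$ matrix functions. The first thing I would record are two elementary but decisive unitarity facts, both valid precisely because $\lambda\in\RR$. Since $B$ is purely imaginary, $g_{x_0}=\exp(-\sigma_3\int_{x_0}^x B)$ is diagonal with unimodular entries, hence unitary, so that by \eqref{WKI-spectral-3} the conjugation does not change the Frobenius norm and $|\mathcal{V}_2(y)| = \sqrt{2}\,|Q(y)|$. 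Likewise, for real $\lambda$ the diagonal matrix $e^{i\lambda(p_{x_0}(x)-p_{x_0}(y))\sigma_3}$ is unitary, so conjugation by it also preserves the Frobenius norm. Combining these with submultiplicativity of $|\cdot|$ gives the pointwise kernel bound $|\mathcal{V}_2(y)\varphi(y)|\le \sqrt{2}\,|Q(y)|\,|\varphi(y)|$, and by \eqref{estimate-V} the weight $w(y):=\sqrt{2}\,|Q(y)|$ lies in $L^1(\RR)$ whenever $q\in X_1$.

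With $\varphi^{(0)}=I$ and $\varphi^{(n+1)}=T\varphi^{(n)}$, the kernel bound gives, in the case $x_0=-\infty$ corresponding to $\varphi_-$,
$$
|\varphi^{(n+1)}(x)-\varphi^{(n)}(x)| \le \int_{-\infty}^{x} w(y)\,|\varphi^{(n)}(y)-\varphi^{(n-1)}(y)|\,dy.
$$
Setting $W(x)=\int_{-\infty}^x w$, an induction on $n$ starting from $|\varphi^{(1)}-\varphi^{(0)}|(x)\le W(x)$ yields the classical Volterra estimate $|\varphi^{(n+1)}(x)-\varphi^{(n)}(x)|\le W(x)^n/n! \le \|w\|_{L^1}^n/n!$. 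Since $\sum_n \|w\|_{L^1}^n/n! = e^{\|w\|_{L^1}}<\infty$, the telescoping series $\sum_n(\varphi^{(n+1)}-\varphi^{(n)})$ converges uniformly, its sum $\varphi_-$ is a bounded continuous solution of \eqref{mu-integral}, and $\|\varphi_-\|_{L^\infty}\le e^{\|w\|_{L^1}}$. The case $x_0=+\infty$ is identical with $W(x)=\int_x^{+\infty} w$. I would point out that the factorial is exactly what lets me dispense with any smallness of $\|Q\|_{L^1}$, so this single argument replaces the interval-by-interval contraction and gluing scheme and handles both tails at once; the finite integrability $\|Q\|_{L^1}<\infty$ is all that is needed to make sense of the unbounded limit $x_0=\pm\infty$.

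For uniqueness I would feed the same estimate back on itself: if $\varphi,\tilde\varphi$ are two bounded solutions then $\varphi-\tilde\varphi=T\varphi-T\tilde\varphi$, and iterating the Volterra bound $n$ times gives $|\varphi(x)-\tilde\varphi(x)|\le (W(x)^n/n!)\,\|\varphi-\tilde\varphi\|_{L^\infty}\to 0$ as $n\to\infty$, forcing $\varphi=\tilde\varphi$. The only genuine obstacle is conceptual rather than computational: the operator $T$ is not a contraction in a single step because $\|Q\|_{L^1}$ is merely finite, not small, and the integration runs out to $\pm\infty$. Both difficulties dissolve once one exploits (i) the unitarity of the exponential factors for real $\lambda$, which keeps the kernel uniformly bounded in $\lambda$, and (ii) the factorial gain of the Volterra iteration, which converts finite total mass into a convergent Neumann series. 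Finally, since none of the bounds depend on $\lambda$, uniformity in $\lambda\in\RR$ is automatic, which is what is needed for the subsequent construction of the scattering coefficients.
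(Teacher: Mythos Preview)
Your proof is correct and takes a genuinely different, more direct route than the paper. The paper argues by subdividing $\RR$ into finitely many subintervals on which $\|Q\|_{L^1}$ is small enough for the map $T$ to be a strict contraction, applies the Banach fixed-point theorem on each piece, and then glues the local solutions by uniqueness. You instead run the Picard--Volterra iteration globally and exploit the factorial gain $W(x)^n/n!$ coming from the ordered (nested) integration, so no smallness is ever needed and there is nothing to glue. What your approach buys is a single clean estimate $\|\varphi_\pm\|_{L^\infty_x}\le e^{\sqrt{2}\|Q\|_{L^1}}$, uniform in $\lambda\in\RR$, obtained in one stroke; the paper's approach buys nothing extra here but is the standard template when one later wants contraction-type control (as in the paper's inverse problem, where smallness genuinely enters). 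Your explicit use of the unitarity of $g_{x_0}$ and of $e^{i\lambda(p_{x_0}(x)-p_{x_0}(y))\sigma_3}$ for real $\lambda$ is exactly the right observation to make the kernel bound $\lambda$-independent, and it is worth noting---as you do implicitly---that this unitarity fails off the real axis, which is why analyticity in $\CC^\pm$ requires the separate column-by-column argument of Proposition~\ref{analyticity-mu}.
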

Furthermore, we have analyticity properties.
\begin{proposition} \label{analyticity-mu}
Let $q\in X_1$. The first column of $\varphi_+$ and the second column of $\varphi_-$ are analytic in $\CC^+$. The first column of $\varphi_-$ and the second column of $\varphi_+$ are analytic in $\CC^{-}$. They are normalized at infinity, i.e.,
$$[(\varphi_+)_1, (\varphi_-)_2] \rightarrow I, \quad [(\varphi_-)_1, (\varphi_+)_2] \rightarrow I \quad   |\lambda| \rightarrow \infty$$
for $\lambda$ in their analytic domains. 
\end{proposition}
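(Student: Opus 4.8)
The plan is to read off the analyticity directly from the Neumann (successive-approximation) series for \eqref{mu-integral}, whose convergence is already guaranteed by the $L^1$ bound \eqref{estimate-V} and Proposition \ref{existence-mu}. The first step is to record the componentwise form of \eqref{mu-integral}. Since $g_{x_0}$ is diagonal, $\mathcal{V}_2$ is off-diagonal, say $\mathcal{V}_2=\begin{pmatrix}0 & \tilde Q\\ -\bar{\tilde Q} & 0\end{pmatrix}$ with $|\tilde Q|=|Q|$, and the conjugation by $e^{i\lambda(p_{x_0}(x)-p_{x_0}(y))\sigma_3}$ attaches the scalar phase $e^{-2i\lambda P}$ to the $(2,1)$ entry and $e^{+2i\lambda P}$ to the $(1,2)$ entry, where $P:=p_{x_0}(x)-p_{x_0}(y)$. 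The decisive observation is the identity
$$P=\int_y^x\Jq\,dy',$$
which is independent of $x_0$ and, since $\Jq\geq 1>0$, has the same sign as $x-y$. Hence for $\varphi_+$ (where $y\geq x$) one has $P\leq 0$, while for $\varphi_-$ (where $y\leq x$) one has $P\geq 0$.

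Writing $\lambda=\xi+i\eta$, the modulus of the phase on the $(2,1)$ entry is $e^{2\eta P}$ and that on the $(1,2)$ entry is $e^{-2\eta P}$. Feeding in the signs: the first column of $\varphi$ involves only the phase $e^{-2i\lambda P}$, whose modulus $e^{2\eta P}$ is $\leq 1$ precisely when $\eta\geq 0$ for $\varphi_+$ (as $P\leq 0$) and when $\eta\leq 0$ for $\varphi_-$ (as $P\geq 0$); the second column involves only $e^{+2i\lambda P}$, with modulus $\leq 1$ when $\eta\leq 0$ for $\varphi_+$ and $\eta\geq 0$ for $\varphi_-$. I would then build the Neumann iterates $\varphi^{(n)}$, each an iterated $y$-integral of products of $\tilde Q$, $\bar{\tilde Q}$ and these phases. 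Every such phase is entire in $\lambda$, so each $\varphi^{(n)}$ is analytic in $\lambda$ on the half-plane dictated by the column; on that half-plane the phase is bounded by $1$, so the standard Volterra estimate built on \eqref{estimate-V} bounds the series by $\exp(C\|Q\|_{L^1})$ uniformly on compact subsets. Weierstrass's theorem then promotes the locally uniform limit to an analytic function, column by column, which is the first assertion.

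For the normalization I would use the operator relation $\varphi-I=(\text{Volterra term})$ and bootstrap from the off-diagonal entries. Concretely, for $\varphi_+$ the component $\varphi_{21}=-\int e^{-2i\lambda P}\bar{\tilde Q}\,\varphi_{11}\,dy$ has $\varphi_{11}$ bounded and $\bar{\tilde Q}\in L^1$, so the integral tends to $0$ as $|\lambda|\to\infty$ inside $\CC^+$: by dominated convergence along directions with $\eta\to\infty$ (the modulus $e^{2\eta P}\to 0$ pointwise since $P<0$ a.e.) and by a Riemann--Lebesgue argument along the real directions. Once $\varphi_{21}\to 0$, the diagonal relation $\varphi_{11}=1+\int\tilde Q\,\varphi_{21}\,dy$ forces $\varphi_{11}\to 1$, so $(\varphi_+)_1\to(1,0)^{\mathsf T}$; symmetrically $(\varphi_-)_2\to(0,1)^{\mathsf T}$ via $\varphi_{12}=\int e^{2i\lambda P}\tilde Q\,\varphi_{22}\,dy$ and $\varphi_{22}=1-\int\bar{\tilde Q}\,\varphi_{12}\,dy$. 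Together these give $[(\varphi_+)_1,(\varphi_-)_2]\to I$, and the same reasoning in $\CC^-$ gives $[(\varphi_-)_1,(\varphi_+)_2]\to I$.

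I expect the main obstacle to be making the limit $|\lambda|\to\infty$ uniform over the whole half-plane rather than along fixed rays: the two mechanisms that annihilate the off-diagonal integral (exponential decay when $\eta$ is large, oscillation when $\xi$ is large) must be combined into a single estimate valid as $|\lambda|\to\infty$, and this should be done uniformly in $x$ so that the bootstrap to the diagonal entries closes. A secondary technical point is the behaviour of the phase exactly on $\RR$, the common boundary of the two analytic domains, where the modulus equals $1$ and only oscillation is available; there I would rely on Proposition \ref{existence-mu} for the boundary values and on the $L^1$ control of $Q$ from \eqref{estimate-V} to justify differentiation under the integral sign in the open half-planes.
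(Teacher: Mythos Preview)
Your proposal is correct and follows essentially the same route as the paper: the paper's argument reduces to checking that $p_{x_0}(x)-p_{x_0}(y)=\int_y^x\Jq\,d\tau$ has the sign of $x-y$, and then defers the Volterra/Neumann-series argument (analyticity via uniformly bounded iterates, normalization via Riemann--Lebesgue/dominated convergence) to the standard reference \cite{Ablowitz-Prinari-Trubatch-2004}, Lemma~2.1. You have simply written out that standard argument in more detail than the paper does; the concerns you flag about uniformity in $\lambda$ and boundary behaviour are exactly those handled in that reference.
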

In order to prove Proposition \ref{analyticity-mu}, it is sufficient to check the sign of $p_{x_0}(x)-p_{x_0}(y)$, i.e., 
\begin{equation} \label{exponent}
p_{x_0}(x)-p_{x_0}(y)=(x-y)+\int_{y}^x(\Jq-1)d\tau
\end{equation}
and find that $p_{x_0}(x)-p_{x_0}(y)$ is positive when $y<x$ and negative when $y>x$, i.e., the sign of the integral $\int_{y}^x(\Jq-1)d\tau$ is consistent with that of $(x-y)$. The full statement of Proposition can be proved analogously as in (\cite{Ablowitz-Prinari-Trubatch-2004}, Lemma 2.1).

In the following, we give useful estimates for the oscillatory integral involving with \eqref{exponent} in the exponent, i.e., for $h\in L^2$,
$$\int_x^{\infty}e^{2i\lambda \ell(x,y)}h(y)dy$$
where $\ell(x,y)=y-x+\int_x^y(\Jq-1)d\tau$. 

\begin{proposition} \label{Fourier-estimate}
Let $q\in X_1$. If $h \in L^2$, then 
$$\|\int_{\pm\infty}^xe^{2i\lambda \ell(x,y)}h(y)dy\|_{L^2_{\lambda}} \leq \sqrt{\pi} \|h\|_{L^2(\RR)}$$
and, furthermore, if $\langle x \rangle h \in L^2$, then
$$\|\langle x\rangle \int_{\pm\infty}^xe^{2i\lambda \ell(x,y)}h(y)dy\|_{L^2_{\lambda}} \leq \sqrt{\pi} \|\langle y \rangle h\|_{L^2(\RR)} \quad \pm x \geq 0,$$
where $\ell(x,y)=y-x+\int_x^y(\langle q(\tau)\rangle-1)d\tau.$
\end{proposition}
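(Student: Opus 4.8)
The plan is to recognize the phase $\ell(x,y)$ as a difference of values of a single monotone function, so that the oscillatory integral becomes a genuine half-line Fourier transform to which Plancherel's theorem applies. Define $p(y)=y+\int_{x_0}^y(\Jq-1)\,d\tau$ as in \eqref{p}; then $\ell(x,y)=p(y)-p(x)$ and $p'(y)=\Jq(y)\geq 1$. Since $q\in X_1$ gives $q\in L^1$ (by Cauchy--Schwarz, $\|q\|_{L^1}\leq\|\langle x\rangle q\|_{L^2}\|\langle x\rangle^{-1}\|_{L^2}$) and $\Jq-1\leq|q|$, the integral $\int_{\RR}(\Jq-1)\,d\tau$ is finite, so $p$ is a strictly increasing absolutely continuous bijection of $\RR$ with $p(\pm\infty)=\pm\infty$.

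First I would treat the unweighted bound, say for the lower limit $-\infty$. Substituting $\xi=p(y)-p(x)$, so that $d\xi=\Jq(y)\,dy$, converts
$$F(\lambda):=\int_{-\infty}^x e^{2i\lambda\ell(x,y)}h(y)\,dy=\int_{-\infty}^0 e^{2i\lambda\xi}H(\xi)\,d\xi,\qquad H(\xi):=\frac{h(y(\xi))}{\Jq(y(\xi))},$$
where $y(\xi)=p^{-1}(\xi+p(x))$. Extending $H$ by zero to $(0,\infty)$, $F$ is, up to the factor $2$ in the exponent, the Fourier transform of an $L^2$ function, so Plancherel's identity yields $\|F\|_{L^2_\lambda}^2=\pi\int_{-\infty}^0|H(\xi)|^2\,d\xi$. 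Changing variables back gives $\int_{-\infty}^0|H|^2\,d\xi=\int_{-\infty}^x|h(y)|^2/\Jq(y)\,dy\leq\|h\|_{L^2}^2$ because $\Jq\geq 1$, which is the claimed bound. The $+\infty$ case and the overall constant $\sqrt{\pi}$ follow identically.

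For the weighted estimate I would run the same computation and then move the weight inside the integral. Carrying $\langle x\rangle$ through the above gives $\|\langle x\rangle F\|_{L^2_\lambda}^2=\pi\langle x\rangle^2\int_{-\infty}^x|h(y)|^2/\Jq(y)\,dy\leq\pi\langle x\rangle^2\int_{-\infty}^x|h|^2\,dy$. Under the sign restriction $\pm x\geq 0$ (here $x\leq 0$ for the $-\infty$ limit) the domain of integration lies entirely where $|y|\geq|x|$, hence $\langle x\rangle\leq\langle y\rangle$ on that set; replacing $\langle x\rangle^2$ by $\langle y\rangle^2$ inside the integral produces $\pi\int_{-\infty}^x\langle y\rangle^2|h(y)|^2\,dy\leq\pi\|\langle y\rangle h\|_{L^2}^2$, and taking square roots finishes the proof.

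The substance here is light; the points requiring care are the rigorous justification that $p$ furnishes a valid change of variable in both directions for $L^2$ functions, and the bookkeeping of the factor $2$ in the exponent within the Plancherel constant. The genuinely essential idea is the monotonicity observation $\langle x\rangle\leq\langle y\rangle$ on the integration domain, which is exactly what the hypothesis $\pm x\geq 0$ secures and is the only place the sign condition enters.
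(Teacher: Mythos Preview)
Your proof is correct and follows essentially the same approach as the paper: the change of variable $\xi=\ell(x,y)$ (equivalently $\xi=p(y)-p(x)$) reduces the oscillatory integral to a half-line Fourier transform, Plancherel gives the constant $\pi$, and the weighted bound comes from the pointwise inequality $\langle x\rangle\leq\langle y\rangle$ on the integration domain under the sign hypothesis $\pm x\geq 0$. The only cosmetic difference is that the paper writes out the $+\infty$ case while you write out the $-\infty$ case.
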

\begin{proof}
We shall use the change of variable $\xi=\ell(x,y)$. 
For any fixed $x\in \RR$, we denote $y(\xi)$ as a function on $\xi$ that is determined from 
$$\xi =y-x +\int_x^{y}(\Jq-1)d\tau.$$
Since $\frac{d\xi}{dy}=\Jq$ and $\xi=0$ if and only if $y=x$, the change of variable gives 
$$\int_x^{\infty}e^{2i\lambda \ell(x,y)}h(y)dy=\int_0^{\infty}e^{2i\lambda \xi}\frac{h(y(\xi))}{\sqrt{1+|q(y(\xi))|^2}}d\xi.$$
By the Plancherel's theorem, we find
\begin{align} \label{Fourier-est-1}
\left\|\int_0^{\infty}e^{2i\lambda\xi} \frac{h(y(\xi))}{\sqrt{1+|q(y(\xi))|^2}}d\xi\right\|^2_{L^2_{\lambda}} &=\pi \int_0^{\infty}\frac{|h(y(\xi))|^2}{1+|q(y(\xi))|^2}d\xi \nonumber \\
 &=\pi \int_x^{\infty}\frac{|h(y)|^2}{\sqrt{1+|q(y)|^2}}dy.
\end{align}
The first estimate in the statement follows from \eqref{Fourier-est-1}. 

The second estimate follows from estimating \eqref{Fourier-est-1} as
$$\int_x^{\infty}\frac{|h(y)|^2}{\sqrt{1+|q(y)|^2}}dy \leq \langle x\rangle^{-2} \int_x^{\infty}\frac{\langle x\rangle^2}{\langle y\rangle^2} \frac{|\langle y\rangle h(y)|^2}{\sqrt{1+|q(y)|^2}}dy\quad x\geq0.$$
The analogous way works for the integral $\int_{-\infty}^x$ as well. 
\end{proof}
We shall give various estimates involving the integral equation \eqref{mu-integral} which is expressed as
\begin{equation}\label{compact-mu-integral}
(I-K_{\pm\infty})\varphi_{\pm}=I
\end{equation}
with 
$$ 
K_{\pm \infty}\varphi_{\pm} =  \int_{\pm \infty}^{x}e^{-i\lambda\ell(x,y) \sigma_3}[\mathcal{V}_2\varphi] e^{i\lambda \ell(x,y)\sigma_3} dy.
$$
To estimate $\varphi_{\pm}-I$, we write
\begin{equation} \label{estimate-integral-1}
(I-K_{\pm\infty})(\varphi_{\pm}-I)=K_{\pm\infty}I.
\end{equation}
We recall 
$$Q=\frac{-i}{4(1+|q|^2+\sqrt{1+|q|^2})}(q(\sqrt{1+|q|^2})_x -q_x(1+\sqrt{1+|q|^2})),$$
which appears in $\mathcal{V}_2=\begin{pmatrix} 0 & Q \\ - \bar{Q} & 0 \end{pmatrix}$.
We can easily check that for $n\geq 0$
$$q \in H^{n+1} \Rightarrow Q \in H^n,$$
$$q\in X_{n+1} \Rightarrow Q\in X_n.$$
Above, we denote $H^n$ as the Sobolev space defined by
$$H^n:=\{f\in L^2(\RR): \partial_x^jf\in L^2(\RR) \quad \forall j=0,1,\cdots,n\}.$$

The following result tells that smoothness of $q$ implies weighted $L^2$ property of of $\varphi_{\pm}$ in the $\lambda$ variable.

\begin{lemma} \label{mu-weight} 
If $q\in X_{1}\cap H^{n+1}$ for $n\geq 0$, there exist $\lambda$-independent matrices $\{\mathcal{C}_j(x)\}_{j=0}^n$ with $\mathcal{C}_0=I$ and $\mathcal{C}_j(x) \in H^{n-j}(dx)$ such that 
$$\|\lambda^k(\varphi_{\pm}-\sum_{j=0}^k\frac{1}{\lambda^j}g_{\pm\infty}\mathcal{C}_j(x)g_{\pm\infty}^{-1})\|_{L^2_{\lambda}}<M$$
for all $k=0,1,\cdots, n$, where $M$ is a positive constant, independent of $x$.
\end{lemma}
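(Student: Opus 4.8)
The plan is to turn the integral equation \eqref{compact-mu-integral} into a WKB-type expansion in inverse powers of $\lambda$ and to control the remainder in $L^2_{\lambda}$ by the oscillatory estimate of Proposition \ref{Fourier-estimate}. First I would record that $\varphi_{\pm}$ satisfies the matrix ODE
\[
\partial_x\varphi_{\pm}=i\lambda\Jq\,[\sigma_3,\varphi_{\pm}]+\mathcal{V}_2\varphi_{\pm},
\]
obtained from \eqref{WKI-spectral-3} and \eqref{gauge-transformation-2} using $\partial_xp_{\pm\infty}=\Jq$ and the fact that $\sigma_3$ commutes with $e^{-i\lambda\sigma_3 p}$. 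Here $[\sigma_3,\cdot]$ annihilates diagonal matrices and doubles off-diagonal ones, while $\mathcal{V}_2$ is purely off-diagonal; this diagonal/off-diagonal splitting drives everything.

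Second, I would construct the coefficients. Writing a formal series $\varphi_{\pm}\sim\sum_{j\ge0}\lambda^{-j}m_j(x)$ and matching powers gives the recursion $\partial_xm_j=i\Jq\,[\sigma_3,m_{j+1}]+\mathcal{V}_2 m_j$. Its off-diagonal part determines $m_{j+1}^{\mathrm{od}}$ algebraically (dividing by $\Jq$ and differentiating $m_j^{\mathrm{od}}$ once), while its diagonal part is an ODE $\partial_xm_j^{\mathrm{d}}=(\mathcal{V}_2 m_j^{\mathrm{od}})^{\mathrm{d}}$ solved by integration from $\pm\infty$. The normalization $\varphi_{\pm}\to I$ forces $m_0=I$; the non-oscillatory diagonal integrations reproduce exactly the gauge exponentials $g_{\pm\infty}$, which is why the coefficients assemble in the conjugated form $m_j=g_{\pm\infty}\mathcal{C}_j g_{\pm\infty}^{-1}$ with $\mathcal{C}_0=I$. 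Since each algebraic step costs one derivative and $q\in X_1\cap H^{n+1}$ yields $Q\in H^n$ (and likewise $B\in H^n$), an induction shows $\mathcal{C}_j\in H^{n-j}$ and, for $j\ge1$, that $\mathcal{C}_j$ decays at $\pm\infty$.

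Third, I would estimate the true remainder $\rho_k:=\lambda^k(\varphi_{\pm}-\sum_{j=0}^k\lambda^{-j}m_j)$. Subtracting the ODE satisfied by the partial sum (whose only uncancelled term is the order-$\lambda^{-k}$ residual built from $\partial_xm_k$ and $\mathcal{V}_2 m_k$) gives an integral equation $(I-K_{\pm\infty})\rho_k=F_k$, where $F_k$ is an oscillatory integral $\int_{\pm\infty}^x e^{\pm2i\lambda\ell(x,y)}h_k(y)\,dy$ whose amplitude $h_k$ is an off-diagonal expression involving at most $\partial_x m_k$, hence $h_k\in L^2$ precisely when $k\le n$. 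Proposition \ref{Fourier-estimate} then bounds $\|F_k\|_{L^2_{\lambda}}\le\sqrt{\pi}\,\|h_k\|_{L^2}$ uniformly in $x$. To invert $I-K_{\pm\infty}$ on $L^2_{\lambda}$ I would work in the norm $\sup_x\|\cdot\|_{L^2_{\lambda}}$: since $|e^{\pm2i\lambda\ell}|=1$, Minkowski's inequality gives $\|K_{\pm\infty}\rho\|_{L^2_{\lambda}}\le\|\mathcal{V}_2\|_{L^1}\sup_y\|\rho(y,\cdot)\|_{L^2_{\lambda}}$, so the same subinterval-splitting used for Proposition \ref{existence-mu} yields a contraction and a bound on $\rho_k$ uniform in $x$.

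The main obstacle is the bookkeeping in the second and third steps: one must verify that the recursively defined diagonal integrations converge and reproduce $g_{\pm\infty}$, that the decay of the $\mathcal{C}_j$ at $\pm\infty$ is strong enough for the residual amplitude $h_k$ to lie in $L^2(\RR)$ (this is exactly where $q\in H^{n+1}$ and the restriction $k\le n$ enter), and that the constant $M$ does not deteriorate as $x\to\pm\infty$. The uniformity in $x$ is delivered automatically by the $x$-independent right-hand side of Proposition \ref{Fourier-estimate} together with the $\sup_x\|\cdot\|_{L^2_{\lambda}}$ contraction, so the real work is the derivative-counting that ties the order $k$ of the expansion to the Sobolev regularity $H^{n+1}$ of the potential.
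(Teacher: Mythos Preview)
Your approach is correct and essentially the same as the paper's: the paper obtains the coefficients by iterated integration by parts on the integral equation $(I-K_{\pm\infty})(\varphi_\pm-I)=K_{\pm\infty}I$ rather than from the ODE recursion, but each integration by parts is exactly one step of your WKB recursion (the boundary term is the algebraic off-diagonal step, and applying $K_{\pm\infty}$ to an already-extracted off-diagonal term is the diagonal integration), and both routes land on the same remainder equation $(I-K_{\pm\infty})\rho_k=F_k$ controlled via Proposition~\ref{Fourier-estimate} together with the Volterra bound $\|(I-K_{\pm\infty})^{-1}\|_{L^\infty_xL^2_\lambda\to L^\infty_xL^2_\lambda}\le e^{2\|Q\|_{L^1}}$. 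One small correction: the conjugated form $g_{\pm\infty}\mathcal{C}_j g_{\pm\infty}^{-1}$ is not produced by the diagonal integrations ``reproducing'' $g_{\pm\infty}$; it appears simply because $\mathcal{V}_2=g_{\pm\infty}\mathcal{M}(Q)g_{\pm\infty}^{-1}$ already carries the conjugation and the diagonal matrix $g_{\pm\infty}$ commutes with every $\sigma_3$ operation in the recursion, so it factors straight through.
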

 
 \begin{proof}
 The integral operator $(I-K_{\pm\infty})^{-1}$ is a bounded operator from $L^2_{\lambda}$ to $L^2_{\lambda}$ for every $x\in \RR$ if $Q \in L^1$ that holds if $q\in X_1$, i.e., we have the estimate
 \begin{equation} \label{K-inverse}
 \|(I-K_{\pm \infty})^{-1}\|_{L_x^{\infty}L_{\lambda}^2\rightarrow L_x^{\infty}L_{\lambda}^2}\leq e^{2\|Q\|_{L^1}}, 
 \end{equation} 
 where $f \in L_x^{\infty}L_{\lambda}^2$ means $\| \|f\|_{L^{2}_{\lambda}}\|_{L^{\infty}_x}<\infty.$

Also, if $Q \in L^2$, from Proposition \ref{Fourier-estimate}, we have that $K_{\pm \infty}I \in L^2_{\lambda}$. Therefore, from \eqref{estimate-integral-1}, we easily deduce that $\varphi_{\pm}-I \in L^2_{\lambda}$
for every $x\in \RR$ if $q\in X_1$. 

 By the integration by parts in the right-hand side of \eqref{estimate-integral-1}, we have 
\begin{align*}
K_{\pm\infty}I &=\int_{\pm\infty}^xe^{-i\lambda \ell(x,y) \sigma_3}g_{\pm \infty} \begin{pmatrix} 0 & Q \\ -\bar{Q} & 0 \end{pmatrix} g_{\pm \infty}^{-1}e^{i\lambda \ell(x,y)\sigma_3}dy\\
&=\frac{1}{2i\lambda \Jq}g_{\pm\infty}\mathcal{M}(Q)\sigma_3g_{\pm\infty}^{-1}+\frac{1}{\lambda}\int_{\pm \infty}^xe^{-i\lambda \ell(x,y) \sigma_3}g_{\pm\infty} \mathcal{\widetilde{M}} g_{\pm \infty}^{-1}e^{i\lambda \ell(x,y) \sigma_3}dy
\end{align*}
where we denote $\mathcal{M}(Q)=\begin{pmatrix} 0 & Q \\ -\bar{Q} & 0 \end{pmatrix}$, and $\widetilde{M}$ is an off-diagonal matrix independent of $\lambda$. It is easy to check that $\widetilde{\mathcal{M}} \in L^2(dx)$ if $q \in H^2$.  


 We can write \eqref{estimate-integral-1} as 
 $$(I-K_{\pm\infty})(\varphi_{\pm}-I-\frac{1}{2i\lambda \Jq}g_{\pm\infty}\mathcal{M}(Q)\sigma_3g_{\pm\infty}^{-1})=\frac{1}{2i\lambda}K_{\pm\infty}(g_{\pm\infty}\Jq^{-1}\mathcal{M}(Q)\sigma_3g_{\pm\infty}^{-1})$$
 $$\quad \quad +\frac{1}{\lambda}\int_{\pm \infty}^xe^{-i\lambda \ell(x,y) \sigma_3}g_{\pm\infty} \mathcal{\widetilde{M}} g_{\pm \infty}^{-1}e^{i\lambda \ell(x,y) \sigma_3}dy.$$
 
Carrying out integration by parts again and doing the same procedure, we get   
$$ 
(I-K_{\pm\infty})(\varphi_{\pm}-I-\sum_{j=2}^k\frac{1}{\lambda^j}g_{\pm\infty}\mathcal{D}_jg_{\pm\infty}^{-1}-\sum_{j=1}^k\frac{1}{\lambda^j}g_{\pm\infty}\mathcal{F}_jg_{\pm\infty}^{-1}) =
$$
\begin{equation} \label{mu-integral-weight}
\quad \quad +\frac{1}{\lambda^k}\int_{\pm \infty}^xe^{-i\lambda \ell(x,y) \sigma_3}g_{\pm\infty} \mathcal{A}_k g_{\pm \infty}^{-1}e^{i\lambda \ell(x,y) \sigma_3}dy,
\end{equation}
where $\mathcal{D}_j$ is a diagonal matrix and $\mathcal{F}_j$ is an off-diagonal matrix, and $\mathcal{D}_j, \mathcal{F}_j$, $\mathcal{A}_k$ depend only on $x$. Also, $\mathcal{D}_j$ and $\mathcal{F}_j$ contain at most $j$ th derivative of $Q$, and $\mathcal{A}_k$ contains at most $k$ th derivative of $Q$.  Since $\mathcal{A}_k\in L^2$ if $q \in H^{k+1}$, then from Proposition \ref{Fourier-estimate} the right hand-side of \eqref{mu-integral-weight} belongs to $L^2$. Since we need $q\in X_1$ to invert the operator $I-K_{\pm\infty}$ in \eqref{K-inverse}, multiplying $\lambda^k$ to \eqref{mu-integral-weight}, we need $q\in X_1\cap H^{k+1}$ to obtain the desired result.
\end{proof}

\begin{lemma} \label{mu-weight-L2} 
If $q\in X_{n+1}$ for $n\geq 0$, there exist $\lambda$-independent matrices $\{\mathcal{C}_j(x)\}_{j=0}^n$ with $\mathcal{C}_0=I$ and $\mathcal{C}_j \in H^{n-j}(dx)$ such that 
$$\|\|\lambda^k(\varphi_{\pm}-\sum_{j=0}^k\frac{1}{\lambda^j}g_{\pm\infty}\mathcal{C}_j(x)g_{\pm\infty}^{-1})\|_{L^2_{\lambda}}\|_{L^2_x(\RR_{\pm})}<M$$
for all $k=0,1,\cdots, n$, where $M$ is a positive constant.
\end{lemma}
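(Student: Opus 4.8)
The plan is to rerun the integration-by-parts scheme behind Lemma \ref{mu-weight}, but to carry a spatial weight through the estimates so that the uniform-in-$x$ control is upgraded to square-integrability in $x$. First I would record that $q\in X_{n+1}$ implies both $q\in X_1\cap H^{n+1}$ and $Q\in X_n$: the former lets me invoke Lemma \ref{mu-weight} verbatim, so the same $\lambda$-independent matrices $\mathcal{C}_j$ (with $\mathcal{C}_0=I$ and $\mathcal{C}_j\in H^{n-j}$) are available and the identity \eqref{mu-integral-weight} holds for each $k\le n$, while the latter gives $\langle y\rangle\mathcal{A}_k\in L^2$ for $k\le n$, since $\mathcal{A}_k$ carries at most the $k$-th derivative of $Q$ together with lower-order products controlled by placing the weight on the top-order factor and using $Q\in L^\infty$. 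Writing $R_k:=\varphi_{\pm}-\sum_{j=0}^k\lambda^{-j}g_{\pm\infty}\mathcal{C}_jg_{\pm\infty}^{-1}$, I would recast \eqref{mu-integral-weight} as $(I-K_{\pm\infty})R_k=\lambda^{-k}G_k$ with $G_k=\int_{\pm\infty}^xe^{-i\lambda\ell\sigma_3}g_{\pm\infty}\mathcal{A}_kg_{\pm\infty}^{-1}e^{i\lambda\ell\sigma_3}\,dy$.

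The crucial algebraic observation is that multiplication by $\lambda^k$ commutes with $K_{\pm\infty}$, because the only $\lambda$-dependence of the kernel sits in the unitary exponentials and produces no polynomial growth; hence $(I-K_{\pm\infty})(\lambda^kR_k)=G_k$, that is, $\lambda^kR_k=G_k+K_{\pm\infty}(\lambda^kR_k)$. Setting $u(x):=\|\lambda^kR_k(x,\cdot)\|_{L^2_\lambda}$ and using that the exponentials and the diagonal matrices $g_{\pm\infty}$ are unitary (so Frobenius norms are preserved) together with Minkowski's integral inequality, I would obtain the scalar Volterra inequality $u(x)\le r(x)+\int_{\pm\infty}^x|\mathcal{V}_2(y)|\,u(y)\,dy$, where the kernel $|\mathcal{V}_2|\sim|Q|$ lies in $L^1$ and $r(x):=\|G_k(x,\cdot)\|_{L^2_\lambda}$. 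The gain now comes from the weighted half of Proposition \ref{Fourier-estimate}, applied entrywise to $G_k$: it yields $r(x)\le C\langle x\rangle^{-1}\|\langle y\rangle\mathcal{A}_k\|_{L^2}$ for $\pm x\ge0$, so the forcing already carries the $\langle x\rangle^{-1}$ decay that is square-integrable on $\RR_{\pm}$.

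It remains to propagate this decay through $(I-K_{\pm\infty})^{-1}$, and this is where I expect the \textbf{main obstacle}: the operator estimate \eqref{K-inverse} is only an $L^\infty_xL^2_\lambda$ bound, so a direct application would destroy the decay and return merely a uniform-in-$x$ constant. To avoid this I would solve the Volterra inequality by Gr\"onwall rather than by the operator norm, obtaining $u(x)\le r(x)+e^{\|\mathcal{V}_2\|_{L^1}}\int_{\pm\infty}^xr(y)|\mathcal{V}_2(y)|\,dy$. The decisive point is the orientation of the Volterra integral: in $\int_{\pm\infty}^x$ the variable $y$ lies beyond $x$ (farther from the origin) on $\RR_{\pm}$, so $\langle y\rangle^{-1}\le\langle x\rangle^{-1}$ there; pulling this factor out against the $L^1$ kernel gives $u(x)\le C'\langle x\rangle^{-1}$ on $\RR_{\pm}$. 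Since $\langle x\rangle^{-1}\in L^2(\RR_{\pm})$, taking the $L^2_x(\RR_{\pm})$ norm yields the claimed bound $M$, depending only on $\|Q\|_{L^1}$ and $\|\langle y\rangle\mathcal{A}_k\|_{L^2}$, both finite for $q\in X_{n+1}$. The argument runs identically for the two signs, matching $\varphi_{\pm}$ with $\RR_{\pm}$.
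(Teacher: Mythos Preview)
Your approach is essentially the paper's: reuse the identity \eqref{mu-integral-weight} from Lemma \ref{mu-weight}, observe that $q\in X_{k+1}$ upgrades $\mathcal{A}_k$ to $L^2(\langle x\rangle^2\,dx)$, apply the weighted half of Proposition \ref{Fourier-estimate} to the forcing $G_k$, and then propagate the resulting $\langle x\rangle^{-1}$ pointwise decay through the Volterra inverse on the appropriate half-line before taking $L^2_x(\RR_\pm)$. The paper simply cites \eqref{K-inverse} at this last step and states the pointwise bound $\|\lambda^k R_k\|_{L^2_\lambda}\le M\langle x\rangle^{-1}$ without elaboration; your Gr\"onwall argument (exploiting that $\langle y\rangle^{-1}\le\langle x\rangle^{-1}$ for $y$ in the Volterra range when $\pm x\ge 0$) is exactly the mechanism that justifies that pointwise estimate, so you are in fact filling in what the paper leaves implicit rather than taking a different route.
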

\begin{proof}
The same argument from the above proof applies here. The only difference is that, since $q\in X_{k+1}$, then $\mathcal{A}_k$ in \eqref{mu-integral-weight} belongs to $L^2(\langle x\rangle^2dx)$. From Proposition \ref{Fourier-estimate}, \eqref{K-inverse}, and \eqref{mu-integral-weight}, we obtain
$$\|\lambda^k(\varphi_{\pm}-\sum_{j=0}^k\frac{1}{\lambda^j}g_{\pm\infty}\mathcal{C}_j(x)g_{\pm\infty}^{-1})\|_{L^2_{\lambda}} \leq M (1+x^2)^{-1/2}, \quad \pm x\geq 0,$$
where $M$ is some positive constant, independent of $x$. This proves the statement. 
\end{proof}

The following result tells that weighted $L^2$ property of $q$ implies smoothness property of $\varphi_{\pm}$ in the $\lambda $ variable.

\begin{lemma} \label{H1-mu-weight}
If $q\in X_{n+1}$ for $n\geq 0$, there exist $\lambda$-independent matrices $\{\mathcal{C}_j(x)\}_{j=0}^n$ with $\mathcal{C}_0=I$ and $\mathcal{C}_j \in H^{n-j}(dx)$ such that  
$$\|\partial_{\lambda}\left\{\lambda^k (\varphi_{\pm}(0;\lambda)-\sum_{j=0}^k\frac{1}{\lambda^j}g_{\pm\infty}(0)\mathcal{C}_j(0)g_{\pm\infty}^{-1}(0))\right\}\|_{L^2_{\lambda}} < M,$$
for all $k=0,1,\cdots,n$, where a positive constant $M>0$
\end{lemma}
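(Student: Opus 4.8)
The plan is to differentiate the remainder identity \eqref{mu-integral-weight} in $\lambda$ and to observe that each $\partial_\lambda$ costs exactly one power of the weight $\langle y\rangle$, which the hypothesis $q\in X_{n+1}$ together with the decay supplied by Lemma \ref{mu-weight-L2} is precisely strong enough to absorb. Write $R_k:=\lambda^k(\varphi_{\pm}-\sum_{j=0}^k\lambda^{-j}g_{\pm\infty}\mathcal{C}_jg_{\pm\infty}^{-1})$, so that multiplying \eqref{mu-integral-weight} by $\lambda^k$ gives $(I-K_{\pm\infty})R_k=G_k$, where $G_k=\int_{\pm\infty}^xe^{-i\lambda\ell\sigma_3}g_{\pm\infty}\mathcal{A}_kg_{\pm\infty}^{-1}e^{i\lambda\ell\sigma_3}\,dy$ and $\mathcal{A}_k\in L^2(\langle x\rangle^2dx)$ since $q\in X_{k+1}$. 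Expanded out, $R_k=\lambda^k\varphi_{\pm}-\sum_{j=0}^k\lambda^{k-j}g_{\pm\infty}\mathcal{C}_jg_{\pm\infty}^{-1}$ carries only nonnegative powers of $\lambda$, hence is a genuine $C^1$ function of $\lambda\in\RR$ with no singularity at $\lambda=0$; differentiating the identity is therefore legitimate and yields $(I-K_{\pm\infty})\partial_\lambda R_k=\partial_\lambda G_k+(\partial_\lambda K_{\pm\infty})R_k$.

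The structural fact I would use everywhere is $\ell(x,y)=\ell(0,y)-\ell(0,x)$ with $|\ell(0,z)|\le|z|+\|q\|_{L^1}\lesssim\langle z\rangle$, so that every $\partial_\lambda$ acting on an exponential $e^{\pm2i\lambda\ell(x,y)}$ produces the factor $\ell(x,y)$, bounded by $\langle y\rangle+\langle x\rangle$. For the forcing term I would split $\partial_\lambda G_k$ using $\ell(x,y)=\ell(0,y)-\ell(0,x)$ into a piece $\ell(0,x)G_k$ and a piece whose integrand carries the extra factor $\ell(0,y)$. The first piece is controlled by $\langle x\rangle\|G_k(x,\cdot)\|_{L^2_\lambda}$, which stays bounded because $\|G_k(x,\cdot)\|_{L^2_\lambda}\lesssim\langle x\rangle^{-1}$ by the weighted estimate of Proposition \ref{Fourier-estimate} applied to $\langle y\rangle\mathcal{A}_k\in L^2$; the second piece is a bare oscillatory integral with density $\ell(0,y)\,(g_{\pm\infty}\mathcal{A}_kg_{\pm\infty}^{-1})$, to which the first (unweighted) estimate of Proposition \ref{Fourier-estimate} applies since $\|\ell(0,\cdot)\mathcal{A}_k\|_{L^2}\lesssim\|\langle y\rangle\mathcal{A}_k\|_{L^2}<\infty$. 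This gives $\partial_\lambda G_k\in L^\infty_xL^2_\lambda$.

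For the operator term $(\partial_\lambda K_{\pm\infty})R_k$, differentiating the kernel brings down a factor $\ell(x,y)\sigma_3$; since $e^{\pm i\lambda\ell\sigma_3}$ is unitary for real $\lambda$, Minkowski's integral inequality gives the pointwise-in-$x$ bound $\|(\partial_\lambda K_{\pm\infty})R_k(x,\cdot)\|_{L^2_\lambda}\lesssim\int_x^{\pm\infty}|\ell(x,y)|\,|Q(y)|\,\|R_k(y,\cdot)\|_{L^2_\lambda}\,dy$. Here Lemma \ref{mu-weight-L2} is essential: it supplies $\|R_k(y,\cdot)\|_{L^2_\lambda}\le M\langle y\rangle^{-1}$, and on the relevant half-line $\pm y\ge\pm x\ge0$ one has $\langle x\rangle\le\langle y\rangle$, so $|\ell(x,y)|\lesssim\langle y\rangle+\langle x\rangle\lesssim\langle y\rangle$ and the weight cancels against $\langle y\rangle^{-1}$, leaving $\lesssim\|Q\|_{L^1}<\infty$ uniformly in $x$. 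Hence $(\partial_\lambda K_{\pm\infty})R_k\in L^\infty_xL^2_\lambda$ as well, and I would conclude by inverting $I-K_{\pm\infty}$ via the resolvent bound \eqref{K-inverse} on $L^\infty_xL^2_\lambda$, which gives $\partial_\lambda R_k\in L^\infty_xL^2_\lambda$ and in particular the asserted bound at $x=0$.

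I expect the main obstacle to be the operator term $(\partial_\lambda K_{\pm\infty})R_k$: unlike the forcing $\partial_\lambda G_k$, it is not a bare oscillatory integral, so the clean weighted estimate of Proposition \ref{Fourier-estimate} does not apply directly, and one must instead trade the weight $\ell(x,y)\sim\langle y\rangle$ created by $\partial_\lambda$ against the decay $\langle y\rangle^{-1}$ of $\|R_k(y,\cdot)\|_{L^2_\lambda}$ from Lemma \ref{mu-weight-L2}. Verifying that this trade is exact, and that the residual factor $\langle x\rangle\langle y\rangle^{-1}$ remains bounded on the correct half-line rather than blowing up, is the delicate point; the choice of base point $x=0$, where $\ell(0,0)=0$, keeps the bookkeeping clean.
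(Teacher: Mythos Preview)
Your approach is correct and essentially identical to the paper's: differentiate the remainder identity \eqref{mu-integral-weight} (after multiplying by $\lambda^k$), control $\partial_\lambda G_k$ via Proposition~\ref{Fourier-estimate}, control $(\partial_\lambda K_{\pm\infty})R_k$ via Minkowski combined with Lemma~\ref{mu-weight-L2}, and invert $I-K_{\pm\infty}$ using \eqref{K-inverse}. The only cosmetic difference is where the weight lands in the operator term: the paper pairs $\langle y\rangle$ with $Q$ (Cauchy--Schwarz using $\langle y\rangle Q\in L^2$ and $\|R_k(\cdot;\lambda)\|_{L^2_\lambda}\in L^2_y(\RR_\pm)$), whereas you pair it with $R_k$; note that the pointwise bound $\|R_k(y,\cdot)\|_{L^2_\lambda}\lesssim\langle y\rangle^{-1}$ you invoke appears in the \emph{proof} of Lemma~\ref{mu-weight-L2} rather than its statement.
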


\begin{proof}
Differentiating \eqref{estimate-integral-1} in $\lambda$ and setting $x=0$, we find that 
\begin{equation} \label{mu-integral-lambda}
(I-\left.K_{\pm\infty}\right|_{x=0})\partial_{\lambda}\varphi_{\pm}=(\partial_{\lambda}\left.K_{\pm \infty}\right|_{x=0})I+(\left.\partial_{\lambda}K_{\pm\infty}\right|_{x=0})(\varphi_{\pm}-I).
\end{equation}
The first term $(\partial_{\lambda}\left.K_{\pm \infty}\right|_{x=0})I$ is in $L^2_{\lambda}$ by Proposition \ref{Fourier-estimate} since $x Q \in L^2$ if $q \in X_1$, i.e.,
\begin{equation}\label{ineq1}
\|(\partial_{\lambda}\left.K_{\pm \infty}\right|_{x=0})I\|_{L^2_{\lambda}}\lesssim \|\langle y\rangle q\|_{L^2}+\|q\|_{L^2}\int_0^{\infty}(\Jq-1)dy,
\end{equation}
where $\lesssim$ hides some absolute constant.

The second term in \eqref{mu-integral-lambda} is estimated by the Minkowski inequality, $x Q \in L^2$, and the boundedness $\|\|I-\varphi_{\pm}\|_{L^{2}_{\lambda}}\|_{L^2_x(\RR_{\pm})}$ from Lemma \ref{mu-weight-L2}, i.e.,
\begin{equation} \label{ineq2}
\|  (\partial_{\lambda}K_{\pm\infty})(\varphi_{\pm}-I)\|_{L^2_{\lambda}} \lesssim  \left\{  \|\langle y\rangle q\|_{L^2}+\|q\|_{L^2}\int_0^{\infty}(\Jq-1)dy\right\}\|\|\varphi_{\pm}-I\|_{L^2_{\lambda}}\|_{L^2_x(\RR_{\pm})}.
\end{equation}
The right-hand side above is bounded if $q\in X_1$. From \eqref{mu-integral-lambda} and two previous inequalities, we deduce that if $q\in X_1$, then $\|\partial_{\lambda}\varphi_{\pm}(0;\lambda)\|_{L^2_{\lambda}} <M$, where $M$ is some constant.

If $q \in X_{k+1}$, then $\mathcal{A}_k$ in equation \eqref{mu-integral-weight} belongs to $L^2(\langle x\rangle^2dx)$, so $\| x \mathcal{A}_k\|_{L^2_x}$ is bounded. Differentiating \eqref{mu-integral-weight} in $\lambda$, multiplying $\lambda^k$, and setting $x=0$, we have 
$$(I-\left.K_{\pm\infty}\right|_{x=0})\partial_{\lambda}\left\{\lambda^k (\varphi_{\pm}-\sum_{j=0}^k\frac{1}{\lambda^j}g_{\pm\infty}\mathcal{C}_jg_{\pm\infty}^{-1})\right\}$$ 
\begin{equation}\label{eq1}
=(\left.\partial_{\lambda}K_{\pm\infty}\right|_{x=0})g_{\pm\infty}\mathcal{A}_kg_{\pm\infty}^{-1}+(\left.\partial_{\lambda}K_{\pm\infty}\right|_{x=0})\lambda^k (\varphi_{\pm}-\sum_{j=0}^k\frac{1}{\lambda^j}g_{\pm\infty}\mathcal{C}_jg_{\pm\infty}^{-1}).
\end{equation}
From this equation, $\mathcal{A}_k\in L^2(\langle x\rangle^2dx)$, and Lemma \ref{mu-weight-L2}, the first term and the second term in the right-hand side of \eqref{eq1} are estimated in the same way as \eqref{ineq1} and \eqref{ineq2}. 
\end{proof}
We have made a series of transformations \eqref{G-transformation}, \eqref{gauge-transformation}, and \eqref{gauge-transformation-2}. Solution of the WKI spectral problem \eqref{WKI-spectral} is related to $\varphi$ as follows,
\begin{equation}\label{psi-mu}
\psi_{\pm}e^{-i\lambda \sigma_3x}=\mathcal{G}g_{\pm \infty}^{-1} \varphi_{\pm}e^{+i\lambda\sigma_3\int_{\pm\infty}^x(\sqrt{1+|q|^2}-1)dy},
\end{equation}
where $\mathcal{G}$ and $g_{\pm\infty}$ are independent of $\lambda$.  We shall denote $m^{(\pm)}=\psi_{\pm}e^{-i\lambda \sigma_3x}$. We have obtained various results on $\varphi_{\pm}$ which can be used for $m^{(\pm)}$, which satisfy the following integral equation
\begin{equation}\label{m-integral-1}
m^{(\pm)}=I-\lambda\int_{\pm\infty}^x e^{i\lambda(x-y)\sigma_3}\mathcal{M}m^{(\pm)}e^{-i\lambda(x-y)\sigma_3}dy.
\end{equation}
From Proposition \ref{existence-mu} and \eqref{psi-mu}, we deduce the following result.
\begin{proposition}
Let $q\in X_1$. Then, there exists unique $L^{\infty}_x$ solution $m^{(\pm)}$ of \eqref{m-integral-1} for every $\lambda \in \RR$.
\end{proposition}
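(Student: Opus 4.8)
The plan is to deduce the statement directly from Proposition~\ref{existence-mu} by exploiting the explicit algebraic relation \eqref{psi-mu}. Recall that $m^{(\pm)}=\psi_{\pm}e^{-i\lambda\sigma_3 x}$ was defined through the chain of transformations \eqref{G-transformation}, \eqref{gauge-transformation}, \eqref{gauge-transformation-2}, which give
$$m^{(\pm)}=\mathcal{G}\,g_{\pm\infty}^{-1}\,\varphi_{\pm}\,e^{i\lambda\sigma_3\int_{\pm\infty}^x(\Jq-1)dy}.$$
By Proposition~\ref{existence-mu}, for every fixed $\lambda\in\RR$ there is a unique $\varphi_{\pm}\in L^{\infty}_x$ solving \eqref{mu-integral}. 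The first task is to check that the three prefactors are bounded and boundedly invertible in $x$, so that the map $\varphi_{\pm}\mapsto m^{(\pm)}$ is a bijection of $L^{\infty}_x$ onto itself; existence and uniqueness for \eqref{m-integral-1} then transfer from those for \eqref{mu-integral}.

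For the boundedness of the prefactors I would argue as follows. From \eqref{G-matrix}, using $\Jq\ge 1$ and $|q|\le\Jq$, each entry of $\mathcal{G}$ is bounded uniformly in $x$ (the diagonal entry equals $\sqrt{(1+\Jq)/(2\Jq)}\le 1$, and the off-diagonal entry equals $\sqrt{(\Jq-1)/(2\Jq)}$, also at most $1$ in modulus); since $\det\mathcal{G}=1$, $\mathcal{G}^{-1}$ is bounded as well. Because $B$ is purely imaginary and $B\in L^1$ when $q\in X_1$ (indeed $|B|\lesssim|q_x||q|\in L^1$ by Cauchy--Schwarz), the integral $\int_{\pm\infty}^x B\,dy$ converges and $g_{\pm\infty}=\mathrm{diag}(e^{-\int B},e^{\int B})$ is a diagonal matrix with unimodular entries, hence bounded with bounded inverse. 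Finally, for $\lambda\in\RR$ the factor $e^{i\lambda\sigma_3\int_{\pm\infty}^x(\Jq-1)dy}$ has real exponent and is therefore unimodular. Consequently $m^{(\pm)}\in L^{\infty}_x$, and the inverse relation recovers $\varphi_{\pm}$ from $m^{(\pm)}$ within $L^{\infty}_x$.

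It remains to confirm that this $m^{(\pm)}$ actually solves \eqref{m-integral-1} and that solutions are unique. For the former, \eqref{m-integral-1} is nothing but the original WKI spectral equation \eqref{WKI-spectral}, recast as a Volterra integral equation for $m^{(\pm)}=\psi_{\pm}e^{-i\lambda\sigma_3 x}$ with the normalization $m^{(\pm)}\to I$ as $x\to\pm\infty$; since the transformations leading to \eqref{mu-integral} are invertible, a solution $\varphi_{\pm}$ of \eqref{mu-integral} maps precisely to a solution of \eqref{m-integral-1}. I would also note that the $\lambda$-integral converges because $q\in X_1$ implies $q\in L^1$ (so $\mathcal{M}m^{(\pm)}\in L^1$), making the integral well defined for each fixed $\lambda$. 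Uniqueness follows by running the bijection backward: two bounded solutions of \eqref{m-integral-1} would produce two bounded solutions of \eqref{mu-integral}, contradicting Proposition~\ref{existence-mu}; alternatively one can argue directly from the Volterra structure of \eqref{m-integral-1} via Gronwall's inequality.

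I do not expect a substantive obstacle here, as the result is essentially a corollary of Proposition~\ref{existence-mu}. The only points requiring genuine care are the uniform boundedness and invertibility of $\mathcal{G}$ and $g_{\pm\infty}$ together with the observation that $B$ is purely imaginary (which is what makes $g_{\pm\infty}$ unitary); these are exactly the facts that keep the change of variables an $L^{\infty}_x$-isomorphism rather than a merely formal substitution.
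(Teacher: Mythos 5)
Your proposal is correct and follows essentially the same route as the paper, which deduces the result in one line from Proposition~\ref{existence-mu} together with the relation \eqref{psi-mu}; you simply make explicit the details the paper leaves implicit (uniform boundedness and invertibility of $\mathcal{G}$, unitarity of $g_{\pm\infty}$ since $B$ is purely imaginary, and unimodularity of the phase factor for real $\lambda$), and these checks are all accurate.
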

Thanks to analyticity result of $\varphi_{\pm}$ as well as their asymptotics in Proposition \ref{analyticity-mu} and \eqref{psi-mu}, we deduce the following.
\begin{proposition} \label{m-limits}
Let $q\in X_1$. Then $m^{(\pm)}$ share the same analyticity as $\varphi_{\pm}$ but not the asymptotic behaviors as $|\lambda|\rightarrow \infty$, that is, 
$$[m^{(+)}]_1e^{-i\lambda \int_{\infty}^x(\Jq-1)dy} \rightarrow \frac{1}{\sqrt{2}(\Jq^2+\Jq)^{1/2}}\begin{pmatrix} 1+\Jq \\ -i\bar{q} \end{pmatrix}e^{\int_{\infty}^xBdy}$$
$$[m^{(-)}]_2e^{i\lambda \int_{-\infty}^x(\Jq-1)dy} \rightarrow \frac{1}{\sqrt{2}(\Jq^2+\Jq)^{1/2}}\begin{pmatrix} - iq\\ 1+\Jq  \end{pmatrix}e^{-\int_{-\infty}^xBdy}$$
as $|\lambda|\rightarrow \infty$ in $\CC^+$, and 
$$[m^{(-)}]_1e^{-i\lambda \int_{-\infty}^x(\Jq-1)dy} \rightarrow \frac{1}{\sqrt{2}(\Jq^2+\Jq)^{1/2}}\begin{pmatrix} 1+\Jq \\ -i\bar{q} \end{pmatrix}e^{\int_{-\infty}^xBdy}$$
$$[m^{(+)}]_2e^{i\lambda \int_{\infty}^x(\Jq-1)dy} \rightarrow \frac{1}{\sqrt{2}(\Jq^2+\Jq)^{1/2}}\begin{pmatrix} - iq\\ 1+\Jq  \end{pmatrix}e^{-\int_{\infty}^xBdy}$$
as $|\lambda|\rightarrow \infty$ in $\CC^-$.
\end{proposition}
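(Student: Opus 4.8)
The plan is to read off both claims directly from the factorization \eqref{psi-mu}, which, since $m^{(\pm)}=\psi_\pm e^{-i\lambda\sigma_3 x}$, can be written as
$$m^{(\pm)}=\mathcal{G}\,g_{\pm\infty}^{-1}\,\varphi_{\pm}\,e^{i\lambda\sigma_3\int_{\pm\infty}^x(\Jq-1)\,dy},$$
where the prefactor $\mathcal{G}g_{\pm\infty}^{-1}$ depends on $x$ but not on $\lambda$, and the diagonal exponential is entire in $\lambda$. Everything then reduces to transferring the properties of $\varphi_\pm$ from Proposition \ref{analyticity-mu} through this identity.

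First I would dispose of the analyticity statement. Because $\mathcal{G}g_{\pm\infty}^{-1}$ is $\lambda$-independent and the scalar exponential is entire, each column of $m^{(\pm)}$ is analytic in exactly the half-plane where the corresponding column of $\varphi_\pm$ is analytic. Here it matters that the exponential is diagonal: it multiplies the first column by $e^{+i\lambda\int(\Jq-1)}$ and the second by $e^{-i\lambda\int(\Jq-1)}$, so analyticity is inherited column by column and the domains listed in Proposition \ref{analyticity-mu} carry over unchanged.

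Next I would compute the four limits one column at a time, in each case first multiplying by the scalar factor that cancels the $\lambda$-dependent exponential. For the first column of $m^{(+)}$ this gives
$$[m^{(+)}]_1\,e^{-i\lambda\int_\infty^x(\Jq-1)\,dy}=\mathcal{G}\,g_\infty^{-1}\,[\varphi_+]_1,$$
whose $\lambda$-dependence now sits entirely in $[\varphi_+]_1$. Since $[\varphi_+]_1\to(1,0)^T$ as $|\lambda|\to\infty$ in $\CC^+$ by Proposition \ref{analyticity-mu}, and $g_\infty^{-1}$ is the diagonal matrix with entries $e^{\int_\infty^x B\,dy}$ and $e^{-\int_\infty^x B\,dy}$, multiplying $\mathcal{G}$ from \eqref{G-matrix} against $(e^{\int_\infty^x B\,dy},0)^T$ produces exactly the displayed vector $\frac{1}{\sqrt{2}(\Jq^2+\Jq)^{1/2}}(1+\Jq,\,-i\bar q)^T e^{\int_\infty^x B\,dy}$. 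The other three limits follow identically: for $[m^{(-)}]_2$ I cancel with $e^{i\lambda\int_{-\infty}^x(\Jq-1)}$ and use $[\varphi_-]_2\to(0,1)^T$, so that $\mathcal{G}$ acts on the second standard basis vector; the two $\CC^-$ limits repeat the computation with $[\varphi_-]_1$ and $[\varphi_+]_2$.

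There is no real analytic difficulty, the content being essentially matrix algebra together with the already-proved asymptotics of $\varphi_\pm$; the limits are pointwise in $x$, with the fixed prefactor $\mathcal{G}g_{\pm\infty}^{-1}$ simply passing through. The only points demanding care are bookkeeping ones: keeping the sign of the canceling exponent consistent with whether a first column (weight $e^{+i\lambda\int}$) or a second column (weight $e^{-i\lambda\int}$) is extracted, and tracking the off-diagonal entries of $\mathcal{G}$ so that $\mathcal{G}g_{\pm\infty}^{-1}$ applied to the standard basis vectors returns the two displayed columns. The conceptual point, which explains why these limits are not $I$ as they were for $\varphi_\pm$, is precisely that the prefactor $\mathcal{G}g_{\pm\infty}^{-1}$ is not the identity and carries the potential into the leading behavior of $m^{(\pm)}$.
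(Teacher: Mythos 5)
Your proposal is correct and follows exactly the paper's route: the paper deduces Proposition \ref{m-limits} directly from the factorization \eqref{psi-mu} together with the analyticity and $\varphi_\pm\to I$ asymptotics of Proposition \ref{analyticity-mu}, with the $\lambda$-independent prefactor $\mathcal{G}g_{\pm\infty}^{-1}$ and the diagonal exponential handled column by column just as you describe. Your explicit matrix computations of $\mathcal{G}g_{\pm\infty}^{-1}e_1$ and $\mathcal{G}g_{\pm\infty}^{-1}e_2$ correctly reproduce the four displayed limit vectors.
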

At this moment, we understood analyticity properties of solution $m^{(\pm)}$ of \eqref{m-integral-1}. As evident in Proposition \ref{m-limits}, however, $m^{(\pm)}$ is not normalized at $\lambda =\infty$. This issue will be addressed later.
\section{Scattering coefficient} \label{Scattering-coefficients}
We first derive three alternative forms of the scattering coefficients. From the relation \eqref{psi-mu}, apparently 
$$\psi_{\pm}e^{-i\lambda \sigma_3x}\rightarrow I \quad x\rightarrow \pm \infty.$$
We relate $\psi_+$ with $\psi_-$ through the $2\times 2$ matrix $T$,
\begin{equation} \label{def-T}
\psi_+=\psi_-T, \quad T=\begin{pmatrix} a & d \\ b & c \end{pmatrix}.
\end{equation}
Taking $x\rightarrow -\infty$ above yields 
\begin{align} \label{T-matrix-explicit}
T = I + \lambda \int_{\RR}\begin{pmatrix} -qm_{21}^{(+)} & -e^{-2i\lambda y} q m_{22}^{(+)} \\ e^{2i\lambda y} \bar{q} m_{11}^{(+)} & \bar{q}m_{12}^{(+)}\end{pmatrix} dy.
\end{align}
Using the symmetry of the spectral problem \eqref{WKI-spectral}, i.e., 
$$\psi=\begin{pmatrix} \psi_1(\lambda) \\ \psi_2(\lambda)\end{pmatrix}=\begin{pmatrix} -\overline{\psi_2(\bar{\lambda})} \\ \overline{\psi_1(\bar{\lambda})}\end{pmatrix},$$
with \eqref{T-matrix-explicit}, we verify that
\begin{equation} \label{T-matrix-a-b}
T =\begin{pmatrix} a(\lambda) & -\overline{b(\bar{\lambda})} \\ b(\lambda) & \overline{a(\bar{\lambda})} \end{pmatrix}. 
\end{equation}
\eqref{T-matrix-explicit} is an integral form for $a$ and $b$ in terms of $m^{(\pm)}$, which was conveniently used to obtain relations of scattering coefficients in \eqref{T-matrix-a-b}.

It is also useful to introduce another form of $T$ in terms of $\varphi_{\pm}$. From \eqref{psi-mu}, $\psi_+=\psi_-T$ can be written as
$$g_{+ \infty}^{-1}\varphi_+=g_{- \infty}^{-1}\varphi_- e^{i\lambda x\sigma_x+i\lambda \sigma_3 \int_{-\infty}^x \mathcal{H}dy}Te^{-i\lambda x\sigma_x-i\lambda \sigma_3 \int_{\infty}^x \mathcal{H}dy}.$$
where we used notation $\mathcal{H}=\sqrt{1+|q|^2}-1$.
From this, by taking the limit $x\rightarrow -\infty$, we find that 
\begin{equation} \label{a-explicit-mu}
a(\lambda)e^{i\lambda \int_{\RR} \mathcal{H}dy+\int_{\RR}Bdy}=1-\int_{\RR}e^{2\int_y^{\infty}Bd\tau}Q (\varphi_+)_{21}dy,
\end{equation}
where $(\varphi_{\pm})_{ij}$ is the $ij$ th element of $\varphi_{\pm}$.
Since $(\varphi_+)_{21}$ is analytic in $\CC^+$ and $(\varphi_+)_{21}\rightarrow 0$ as $|\lambda|\rightarrow \infty$ as in Proposition \ref{analyticity-mu}, from \eqref{a-explicit-mu} we have
\begin{proposition} \label{a-limits}
If $q\in X_1$, $a(\lambda)$ is analytic in $\CC^+$ and $\overline{a(\bar{\lambda})}$ is analytic in $\CC^-$ with limits,
$$a(\lambda)e^{i\lambda \int_{\RR} \mathcal{H}dy} \rightarrow e^{- \int_{\RR}Bdy}$$
$$ \overline{a(\bar{\lambda})} e^{-i\lambda \int_{\RR} \mathcal{H}dy}\rightarrow e^{ \int_{\RR}Bdy}$$
as $\lambda \rightarrow \infty$ in their analytic domains. 
\end{proposition}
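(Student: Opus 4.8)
The plan is to read everything off the explicit representation \eqref{a-explicit-mu}, in which $a(\lambda)$ has already been isolated up to multiplication by the entire, nowhere-vanishing factor $e^{i\lambda\int_{\RR}\mathcal{H}dy+\int_{\RR}Bdy}$. Writing $\Phi(\lambda):=1-\int_{\RR}e^{2\int_y^{\infty}Bd\tau}Q\,(\varphi_+)_{21}\,dy$ for the right-hand side, the identity becomes $a(\lambda)=e^{-i\lambda\int_{\RR}\mathcal{H}dy-\int_{\RR}Bdy}\,\Phi(\lambda)$, so both conclusions about $a$ reduce to the corresponding statements about the single scalar integral $\Phi$.

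First I would establish analyticity. Since $q\in X_1$ gives $Q\in L^1$ via the estimate \eqref{estimate-V}, and since $B$ is purely imaginary so that $|e^{2\int_y^{\infty}Bd\tau}|=1$, while Proposition \ref{existence-mu} together with the resolvent bound \eqref{K-inverse} furnishes a $\lambda$-independent bound $\|(\varphi_+)_{21}\|_{L^{\infty}_x}\le e^{2\|Q\|_{L^1}}$ valid throughout $\CC^+$, the integrand in $\Phi$ is dominated by the fixed $L^1(\RR)$ function $e^{2\|Q\|_{L^1}}|Q(y)|$ uniformly on compact subsets of $\CC^+$. Because $(\varphi_+)_{21}$ is analytic in $\CC^+$ by Proposition \ref{analyticity-mu}, the integrand is analytic in $\lambda$ for a.e.\ $y$, and the uniform $L^1$ domination lets me apply Morera's theorem together with Fubini (equivalently, differentiate under the integral sign) to conclude that $\Phi$ is analytic in $\CC^+$. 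Multiplying by the entire prefactor gives analyticity of $a$ in $\CC^+$.

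Next I would compute the limit. As $|\lambda|\to\infty$ in $\CC^+$, Proposition \ref{analyticity-mu} gives $(\varphi_+)_{21}\to 0$ pointwise, and the same $L^1$ majorant now drives the dominated convergence theorem, so $\int_{\RR}e^{2\int_y^{\infty}Bd\tau}Q\,(\varphi_+)_{21}\,dy\to 0$ and hence $\Phi(\lambda)\to 1$. Dividing by the nonvanishing exponential yields $a(\lambda)e^{i\lambda\int_{\RR}\mathcal{H}dy}\to e^{-\int_{\RR}Bdy}$, which is the first asserted limit.

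Finally, the statements for $\overline{a(\bar{\lambda})}$ follow from the symmetry \eqref{T-matrix-a-b} without new work: the map $\lambda\mapsto\overline{a(\bar{\lambda})}$ is analytic in $\CC^-$ precisely because $a$ is analytic in $\CC^+$ (a Schwarz-type reflection), and conjugating the first limit with $\lambda$ replaced by $\bar{\lambda}$ gives $\overline{a(\bar{\lambda})}\,e^{-i\lambda\int_{\RR}\mathcal{H}dy}\to\overline{e^{-\int_{\RR}Bdy}}$; since $\mathcal{H}=\Jq-1$ is real-valued and $B$ is purely imaginary, $\int_{\RR}Bdy$ is purely imaginary and the right side equals $e^{\int_{\RR}Bdy}$. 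I expect the only real work to be the careful bookkeeping behind the uniform $L^1$ domination, specifically pinning down the $\lambda$-independent $L^{\infty}_x$ bound on $(\varphi_+)_{21}$ up to the real boundary of $\CC^+$ from \eqref{K-inverse}; once that majorant is in hand, both the analyticity (via Morera) and the vanishing of the integral (via dominated convergence) are routine.
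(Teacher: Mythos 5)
Your proposal is correct and follows the same route as the paper: the paper's proof consists precisely of reading analyticity and the limit off the representation \eqref{a-explicit-mu} using the analyticity and decay of $(\varphi_+)_{21}$ from Proposition \ref{analyticity-mu}, with the $\overline{a(\bar\lambda)}$ statement following by the symmetry \eqref{T-matrix-a-b}. You have merely supplied the standard supporting details (the $L^1$ majorant, Morera, dominated convergence) that the paper leaves implicit.
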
 

Lastly, we also give formulas of $a$ and $b$ where we have freedom to fix any value of $x\in \RR$, i.e., 
\begin{align}
a(\lambda) & =\det((\psi_+)_1,(\psi_-)_2) \nonumber\\
&=\det((\mathcal{G}g_+^{-1}\varphi_+)_1, (\mathcal{G}g_{-}^{-1}\varphi_-)_2)e^{-i\lambda \int_{\RR}\mathcal{H}dx} \label{a-det} \\
b(\lambda) &=\det((\psi_-)_1,(\psi_+)_1) \nonumber\\
&= \det(e^{2i\lambda x} (\mathcal{G}g_-^{-1}\varphi_-)_1e^{-i\lambda \int_{-\infty}^x\mathcal{H}dy}, (\mathcal{G}g_+^{-1}\varphi_+)_1e^{-i\lambda \int_{\infty}^x\mathcal{H}dy}), \label{b-det}
\end{align}
where we denoted $g_{\pm}:=g_{\pm\infty}$ for convenience and we used the notation $(*)_j$ to denote the $j$ th column of the matrix $*$.

We further rewrite \eqref{b-det} into a very useful form for estimates. Denote $\mathcal{R}_k=\sum_{j=0}^{k}\frac{1}{\lambda^j}\mathcal{C}_j(0)$ in Proposition \ref{mu-weight}. Setting $x=0$ in \eqref{b-det}, we can express it as
\begin{align} \label{b-estimate-form} 
b(\lambda)e^{i(\int_{-\infty}^0-\int_{0}^{\infty})\mathcal{H}dy}&=\det((\mathcal{G}g_-^{-1}(\varphi_-(0;\lambda)-g_-\mathcal{R}_kg_-^{-1})_1,(\mathcal{G}g_+^{-1}\varphi_+(0;\lambda))_1)+\nonumber \\
&\quad \quad +\det((\mathcal{G}\mathcal{R}_kg_+)_1, (\mathcal{G}g_+^{-1}(\varphi_+(0;\lambda)-g_+\mathcal{R}_kg_+^{-1}))_1) 
\end{align}
where we have used the very important fact that
$$\det((\mathcal{G}g_-^{-1}g_-\mathcal{R}_kg_-^{-1})_1,(\mathcal{G}g_+^{-1}g_+\mathcal{R}_kg_+^{-1})_1)=\det((\mathcal{G}\mathcal{R}_kg_-^{-1})_1,(\mathcal{G}\mathcal{R}_kg_+^{-1})_1)=0$$
since vectors are identical up to multiple and thus linearly dependent. This is why we kept track of $g_{\pm \infty}$.

We shall begin with giving regularity of $b(\lambda)$.
\begin{lemma} \label{b-estimate-2}
If $q\in X_{n+1}$, then $\lambda^jb \in L^2$ and $\partial_{\lambda}(\lambda^jb) \in L^2$ for $j=0,1, \cdots, n$.
\end{lemma}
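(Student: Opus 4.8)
The plan is to estimate $b$ separately on $|\lambda|\ge 1$ and on $|\lambda|\le 1$, because the representation \eqref{b-estimate-form} is tailored to large $\lambda$ and is singular at $\lambda=0$ through $\mathcal{R}_k$. All quantities below are taken at $x=0$, and I abbreviate $E_\pm^{(k)}:=\varphi_\pm(0;\lambda)-g_\pm\mathcal{R}_k g_\pm^{-1}$. I would first collect the regularity facts used throughout, each obtained from the one-dimensional embedding $H^1_\lambda\hookrightarrow L^\infty_\lambda$. From the proof of Lemma \ref{mu-weight} one has $\varphi_\pm(0;\cdot)-I\in L^2_\lambda$, and from \eqref{mu-integral-lambda} together with Proposition \ref{Fourier-estimate} one has $\partial_\lambda\varphi_\pm(0;\cdot)\in L^2_\lambda$ when $q\in X_1$; hence $\varphi_\pm(0;\cdot)\in L^\infty_\lambda$. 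Likewise, Lemma \ref{mu-weight} gives $\lambda^k E_\pm^{(k)}\in L^2_\lambda$ and Lemma \ref{H1-mu-weight} gives $\partial_\lambda(\lambda^k E_\pm^{(k)})\in L^2_\lambda$, so $\lambda^k E_\pm^{(k)}\in L^\infty_\lambda$ as well. Since $\mathcal{G}$, $g_\pm$ and $\mathcal{C}_j(0)$ are independent of $\lambda$, multiplication by them preserves $L^2_\lambda$ and $L^\infty_\lambda$.

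On $|\lambda|\ge1$ I would take $k=j$ with $0\le j\le n$ in \eqref{b-estimate-form}; the hypotheses of Lemmas \ref{mu-weight} and \ref{H1-mu-weight} hold for every such $k$ because $q\in X_{n+1}\subset X_1\cap H^{n+1}$, and the $\mathcal{R}_k$-only determinant vanishes, as recorded just after \eqref{b-estimate-form}. The scalar prefactor $e^{i(\int_{-\infty}^0-\int_0^\infty)\mathcal{H}dy}$ does not depend on $\lambda$ and is harmless. Expanding the two $2\times2$ determinants by bilinearity, each summand is a product of one entry carrying the weight $\lambda^j$ and one remaining entry. In the first determinant I place $\lambda^j$ on the $E_-^{(j)}$ column, pairing $\lambda^j E_-^{(j)}\in L^2_\lambda$ with $\varphi_+\in L^\infty_\lambda$; in the second determinant $\mathcal{R}_j\to I$ is bounded on $|\lambda|\ge1$, and placing $\lambda^j$ on the $E_+^{(j)}$ column pairs $L^\infty_\lambda$ with $L^2_\lambda$. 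Either way the product lies in $L^2(|\lambda|\ge1)$, so $\lambda^j b\in L^2(|\lambda|\ge1)$.

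For the derivative on $|\lambda|\ge1$ I would differentiate \eqref{b-estimate-form} and apply the product rule to each determinant, noting that $\partial_\lambda\mathcal{R}_j=O(\lambda^{-2})$ is bounded there. Most resulting summands pair a factor controlled in $L^2_\lambda$ — either $\partial_\lambda(\lambda^j E_\pm^{(j)})$ from Lemma \ref{H1-mu-weight} or $\lambda^j E_\pm^{(j)}$ itself — against a factor bounded in $L^\infty_\lambda$, namely $\varphi_+$, $\mathcal{R}_j$ or $\partial_\lambda\mathcal{R}_j$, and so lie in $L^2$. The one genuinely delicate summand is $\det\big((\mathcal{G}g_-^{-1}\lambda^j E_-^{(j)})_1,(\mathcal{G}g_+^{-1}\partial_\lambda\varphi_+)_1\big)$, where both columns are a priori only in $L^2_\lambda$ and the naive product would be merely $L^1$. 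This is precisely where the Sobolev upgrade is essential: $\lambda^j E_-^{(j)}\in L^\infty_\lambda$ turns the pairing back into $L^\infty_\lambda\cdot L^2_\lambda\subset L^2_\lambda$. I expect this single interaction to be the main obstacle of the lemma; all the other pairings are routine $L^2$–$L^\infty$ Hölder estimates.

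On $|\lambda|\le1$ I would discard \eqref{b-estimate-form}, whose individual terms blow up at $\lambda=0$, and instead evaluate \eqref{b-det} at $x=0$, which factors as $b(\lambda)=e^{-i\lambda(\alpha+\beta)}\det\big((\mathcal{G}g_-^{-1}\varphi_-)_1,(\mathcal{G}g_+^{-1}\varphi_+)_1\big)$ for real constants $\alpha,\beta$. Since $\varphi_\pm(0;\cdot)\in L^\infty_\lambda$, $b$ is bounded on $[-1,1]$, so $\lambda^j b\in L^2(|\lambda|\le1)$; differentiating and using $\partial_\lambda\varphi_\pm(0;\cdot)\in L^2_\lambda$ against $\varphi_\pm\in L^\infty_\lambda$ gives $\partial_\lambda b\in L^2(|\lambda|\le1)$, whence $\partial_\lambda(\lambda^j b)=j\lambda^{j-1}b+\lambda^j\partial_\lambda b\in L^2(|\lambda|\le1)$. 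Combining the two regimes yields $\lambda^j b\in L^2(\RR)$ and $\partial_\lambda(\lambda^j b)\in L^2(\RR)$ for all $j=0,1,\dots,n$, as claimed.
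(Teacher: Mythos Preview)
Your argument is correct and follows essentially the same route as the paper: split into a neighborhood of the origin (handled via \eqref{b-det}) and its complement (handled via \eqref{b-estimate-form} with Lemmas \ref{mu-weight} and \ref{H1-mu-weight}). Your explicit use of the embedding $H^1_\lambda\hookrightarrow L^\infty_\lambda$ to control the ``delicate summand'' $\det\big((\mathcal{G}g_-^{-1}\lambda^j E_-^{(j)})_1,(\mathcal{G}g_+^{-1}\partial_\lambda\varphi_+)_1\big)$ is a detail the paper leaves implicit; note, however, that the prefactor on the left of \eqref{b-estimate-form} actually carries a factor $e^{i\lambda(\cdots)}$ (cf.\ the $cb(\lambda)$ term in the paper's proof), which is unimodular with bounded derivative and hence harmless for your estimates.
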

\begin{proof}
From \eqref{b-estimate-form} when $k=0$, $b$ is estimated as 
$$\|b\|_{L^2}\leq K\|\varphi_+(0;\lambda)\|_{L^{\infty}}\|\varphi_-(0;\lambda)-I\|_{L^2}+K\|\varphi_+(0;\lambda)-I\|_{L^2},$$
where $K$ is some finite positive constant. 
The left-hand side above is bounded if $q\in X_1$ from Lemmas \ref{mu-weight} and \ref{H1-mu-weight} in the case of $n=0$. Setting $x=0$ in \eqref{b-det} and differentiating in $\lambda$, we have 
$$\partial_{\lambda}b=cb(\lambda)+\det((\mathcal{G}g_{-\infty}^{-1}\partial_{\lambda}\varphi_-(0;\lambda))_1, (\mathcal{G}g_{+\infty}^{-1}\varphi_+(0;\lambda))_1)$$
$$+\det((\mathcal{G}g_{-\infty}^{-1}\varphi_-(0;\lambda))_1, (\mathcal{G}g_{+\infty}^{-1}\partial_{\lambda}\varphi_+(0;\lambda))_1)$$
where $c=-\lambda i(\int_{-\infty}^0-\int_{0}^{\infty})\mathcal{H}dy$. The above is estimated analogously if $q\in X_1$, i.e., 
$$\|\partial_{\lambda}b\|_{L^2}\leq K\|b\|_{L^2}+K\|\varphi_+(0;\lambda)\|_{L^{\infty}}\|\partial_{\lambda}\varphi_-(0;\lambda)\|_{L^2}$$
$$\quad \quad+K\|\varphi_-(0;\lambda)\|_{L^{\infty}}\|\partial_{\lambda}\varphi_+(0;\lambda)\|_{L^2},$$
where $K$ is some positive constant. The right-hand side is bounded if $q\in X_1$. 

Since $b\in H^1$, in order to show that $\lambda^jb\in L^2$ and $\partial_{\lambda}(\lambda^jb) \in L^2$ for $j=1,2,\cdots, n$, we need to care only the behavior at infinity. Let $I=\RR\setminus U$, where $U$ is a neighborhood of the origin. Multiplying $\lambda^k$ to \eqref{b-estimate-form}, since the singularity of $\mathcal{R}_k$ at $\lambda=0$ is bounded away from $I$, we get the following estimate
$$\|\lambda^kb\|_{L^2(I)}\leq K\|\lambda^k(\varphi_-(0;\lambda)-g_-\mathcal{R}_kg_-^{-1})\|_{L^2(I)}\|\varphi_+\|_{L^{\infty}}$$
$$\quad+K\|\lambda^k(\varphi_+(0;\lambda)-g_+\mathcal{R}_kg_+^{-1})\|_{L^2(I)}$$
for some positive constant $K$, and the right-hand side is bounded for $k=0,1,\cdots,n$ if $q\in X_{1}\cap H^{n+1}$ from Lemma \ref{mu-weight}. Next, analogously, differentiating \eqref{b-estimate-form} in $\lambda$, multiplying by $\lambda^k$, we can estimate $\partial_{\lambda}(\lambda^jb)$ in $L^2(I)$ for $j=0,1,\cdots,n$ if $q\in X_{n+1}$ from Lemmas  \ref{mu-weight} and \ref{H1-mu-weight}. 
\end{proof}

Next, we turn to estimates on $a$. We first notice that \eqref{a-det} can be further expressed as 
$$
a(\lambda)e^{i\lambda\int_{\RR}\HH dy}=\det((\mathcal{G}g_+^{-1}(\varphi_+-I))_1,(\mathcal{G}g_-^{-1}\varphi_-)_2) \quad \quad \quad
$$
\begin{equation} \label{a-estimate-form}
\quad \quad\quad \quad\quad \quad+\det((\mathcal{G}g_+^{-1})_1,(\mathcal{G}g_-^{-1}(\varphi_--I))_2)+e^{-\int_{\RR}Bdy},
\end{equation}
where we have used that
$$\det((\mathcal{G}g_+^{-1})_1,(\mathcal{G}g_-^{-1})_2)=e^{-\int_{\RR}Bdy}.$$
The above equality follows from direction computations.
From Lemmas  \ref{mu-weight} and \ref{H1-mu-weight} and equation \eqref{a-estimate-form}, we deduce the following result.
\begin{lemma} \label{a-H1}
If $q \in X_1$, then $a(\lambda)e^{i\lambda \int_{\RR}\mathcal{H}dx+\int_{\RR}Bdy}-1\in L^2$ and $\partial_{\lambda}(a(\lambda)e^{i\lambda \int_{\RR}\mathcal{H}dx})\in L^2$.
\end{lemma}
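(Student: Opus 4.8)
The plan is to read off both claims directly from the decomposition \eqref{a-estimate-form}, which already isolates the fluctuations $\varphi_{\pm}-I$ from the explicit constant term $e^{-\int_{\RR}Bdy}$. Since $a(\lambda)$ is independent of $x$, every determinant in \eqref{a-estimate-form} may be evaluated at a single fixed point, say $x=0$ as in \eqref{b-det}, so that the prefactor matrices $\mathcal{G}$ and $g_{\pm\infty}^{-1}$ become $\lambda$-independent bounded constants and contribute only absolute multiplicative constants to the estimates.

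For the first claim I would multiply \eqref{a-estimate-form} through by the $\lambda$-independent factor $e^{\int_{\RR}Bdy}$; because $B$ is purely imaginary this factor has modulus one, and it turns the explicit term $e^{-\int_{\RR}Bdy}$ into exactly $1$, leaving
\begin{equation*}
a(\lambda)e^{i\lambda\int_{\RR}\HH dy+\int_{\RR}B dy}-1 = e^{\int_{\RR}B dy}\left[\det\bigl((\mathcal{G}g_+^{-1}(\varphi_+-I))_1,(\mathcal{G}g_-^{-1}\varphi_-)_2\bigr)+\det\bigl((\mathcal{G}g_+^{-1})_1,(\mathcal{G}g_-^{-1}(\varphi_--I))_2\bigr)\right].
\end{equation*}
Each determinant is bilinear in its two columns, and in each term exactly one column carries a factor $\varphi_{\pm}-I$ while the other stays bounded. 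I would therefore estimate in $L^2_{\lambda}$ by H\"older, placing the $\varphi_{\pm}-I$ column in $L^2_{\lambda}$ and the remaining column in $L^{\infty}_{\lambda}$, schematically $\lesssim \|\varphi_+-I\|_{L^2_{\lambda}}\|\varphi_-\|_{L^{\infty}_{\lambda}}+\|\varphi_--I\|_{L^2_{\lambda}}$. The $L^2_{\lambda}$ bounds come from Lemma \ref{mu-weight} in its $n=0$, $k=0$ instance (which gives $\varphi_{\pm}-I\in L^2_{\lambda}$ for $q\in X_1$, since $X_1\subset H^1$), and the $L^{\infty}_{\lambda}$ bounds follow from the uniform Volterra estimate behind Proposition \ref{existence-mu}: for real $\lambda$ the oscillatory kernel has modulus one, so $\|\varphi_{\pm}(0;\lambda)\|_{L^{\infty}_{\lambda}}\le e^{\|Q\|_{L^1}}$. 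This closes the first claim.

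For the second claim I would differentiate \eqref{a-estimate-form} in $\lambda$ before multiplying by any constant; the explicit term $e^{-\int_{\RR}B dy}$ is $\lambda$-independent and drops out, so $\partial_{\lambda}(a(\lambda)e^{i\lambda\int_{\RR}\HH dy})$ equals the $\lambda$-derivative of the two determinants. Distributing $\partial_{\lambda}$ across the columns via bilinearity of the determinant produces four terms, each a determinant in which precisely one column is $\partial_{\lambda}\varphi_{\pm}$ while the remaining column ($\varphi_{\pm}$ or $\varphi_{\pm}-I$) stays bounded. The same H\"older split bounds these in $L^2_{\lambda}$ once $\partial_{\lambda}\varphi_{\pm}(0;\lambda)\in L^2_{\lambda}$, which is exactly the $n=0$ conclusion of Lemma \ref{H1-mu-weight} under $q\in X_1$, combined with the $L^{\infty}_{\lambda}$ control of $\varphi_{\pm}$ already used above.

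I expect no serious obstacle, as the argument is a near-verbatim transcription of the estimate for $b$ in Lemma \ref{b-estimate-2}. The only points requiring care are the purely-imaginary nature of $\int_{\RR}B\,dy$ (so that multiplication by $e^{\int_{\RR}B dy}$ is harmless and yields the clean ``$-1$''), the legitimacy of evaluating the $x$-independent determinants at $x=0$ so the $\mathcal{G}g_{\pm}^{-1}$ prefactors are genuinely $\lambda$-independent, and verifying that $\partial_{\lambda}$ indeed annihilates the constant term so no uncontrolled $\lambda$-linear growth survives. Unlike the $\lambda^j b$ estimates of Lemma \ref{b-estimate-2}, no subtraction of the asymptotic expansion $\mathcal{R}_k$ is needed and no singularity at $\lambda=0$ arises, precisely because only the $n=0$ hypothesis $q\in X_1$ is in force.
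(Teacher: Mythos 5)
Your proposal is correct and is essentially the paper's own argument: the paper derives Lemma \ref{a-H1} directly from the decomposition \eqref{a-estimate-form} together with Lemmas \ref{mu-weight} and \ref{H1-mu-weight}, exactly as you do, and you have merely filled in the routine H\"older/$L^\infty$--$L^2$ splitting and the observation that $e^{\int_{\RR}Bdy}$ is unimodular, which the paper leaves implicit. No discrepancy to report.
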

Thanks to Lemma \ref{a-H1}, we see that $a(\lambda)$ is continuous if $q\in X_1$. We will show that, in the following, $a(\lambda)$ has no zero under smallness condition of $q\in X_1$.
From \eqref{a-explicit-mu}, by the reverse triangular inequality, we have
\begin{equation} \label{a-explicit-mu-estimate}
|a| \geq |1-\left|\int_{\RR}Q (\varphi_+)_{21}dy\right||.
\end{equation}
If $q\in X_1$, then Proposition \ref{existence-mu} states the unique existence of bounded solution $\varphi_{\pm}$ of the integral equation \eqref{m-integral-1}. Now, if $q \in X_1$ is sufficiently small, then one can easily verify the following. 
\begin{proposition} \label{small-solution}
For any $\delta_1>0$, there exists a $\delta_2>0$ such that if $\|q\|_{X_1}<\delta_2$, then $\|(\varphi_{+})_1-e_1\|_{L^{\infty}}<\delta_1$ for every $\lambda \in \RR\cup\CC^+$.
\end{proposition}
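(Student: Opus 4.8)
The plan is to solve the integral equation \eqref{compact-mu-integral} for the first column $(\varphi_+)_1$ by a direct contraction argument, exploiting that smallness of $q$ in $X_1$ forces $\|Q\|_{L^1}$ to be small. First I would extract from $\varphi_+ = I + K_{+\infty}\varphi_+$ the scalar system satisfied by $(\varphi_+)_1 = (\mu_1,\mu_2)^{T}$. Because $\mathcal{V}_2$ is off-diagonal and $g_{+\infty}$ is diagonal with unimodular entries (as $B$ is purely imaginary), this system reads $\mu_1(x) = 1 - \int_x^{\infty} V(y)\mu_2(y)\,dy$ and $\mu_2(x) = -\int_x^{\infty} e^{2i\lambda\ell(x,y)} W(y)\mu_1(y)\,dy$, where $|V| = |W| = |Q|$ and $\ell(x,y) = y - x + \int_x^y(\Jq-1)\,d\tau$ is the exponent appearing in \eqref{exponent}. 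The decisive structural fact, already used for analyticity in Proposition \ref{analyticity-mu}, is that $\ell(x,y) \geq 0$ whenever $y \geq x$; hence for $\operatorname{Im}\lambda \geq 0$ the oscillatory factor satisfies $|e^{2i\lambda\ell(x,y)}| \leq 1$ throughout the region of integration.

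With this sign information I would show that, for each fixed $\lambda \in \RR \cup \CC^+$, the operator $K_{+\infty}$ acts on the first column as a bounded map on $L^\infty_x$ of norm at most $\|Q\|_{L^1}$, and that this bound is \emph{uniform} over the closed upper half-plane precisely because the exponential weight never exceeds one there. Feeding $I$ into the right-hand side likewise gives $\|(K_{+\infty}I)_1\|_{L^\infty_x} \leq \|Q\|_{L^1}$. Consequently, as soon as $\|Q\|_{L^1} < 1$, the Neumann series for $(I-K_{+\infty})^{-1}$ converges on $L^\infty_x$ and \eqref{estimate-integral-1} yields
$$\|(\varphi_+)_1 - e_1\|_{L^\infty_x} \leq \frac{\|Q\|_{L^1}}{1 - \|Q\|_{L^1}}, \qquad \lambda \in \RR \cup \CC^+,$$
with a constant independent of $\lambda$. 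Since the bounded solution so obtained is unique (Proposition \ref{existence-mu}), it agrees with the analytic continuation of $(\varphi_+)_1$ to $\CC^+$, so the estimate holds on the whole closed half-plane as required.

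It then remains only to make $\|Q\|_{L^1}$ arbitrarily small by shrinking $\|q\|_{X_1}$. Here I would use \eqref{estimate-V} together with the embedding $X_1 \hookrightarrow H^1 \hookrightarrow L^\infty$ and the Cauchy--Schwarz bound $\|q_x\|_{L^1} \leq \|\langle x\rangle^{-1}\|_{L^2}\,\|\langle x\rangle q_x\|_{L^2} \lesssim \|q\|_{X_1}$. As $\|q\|_{L^\infty}$ and $\|\Jq-1\|_{L^\infty}$ are themselves controlled by $\|q\|_{X_1}$, the nonlinear terms $\|q^2q_x\|_{L^1}$ and $\|q_x\Jq\|_{L^1}$ are of higher order in $\|q\|_{X_1}$, so that $\|Q\|_{L^1} \to 0$ as $\|q\|_{X_1} \to 0$. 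Given $\delta_1 > 0$, I would then pick $\delta_2$ small enough that $\|q\|_{X_1} < \delta_2$ drives the right-hand side of the displayed estimate below $\delta_1$, which completes the argument.

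The main obstacle is the uniform-in-$\lambda$ contraction of the second paragraph: essentially all of the work is in checking that a single $L^\infty_x$ operator bound serves for every $\lambda$ in the closed upper half-plane, and this rests squarely on the sign of $\ell(x,y)$, i.e.\ on the same mechanism that underlies Proposition \ref{analyticity-mu}. Once that uniform bound is secured, the passage from smallness of $\|q\|_{X_1}$ to smallness of $\|Q\|_{L^1}$ and the final choice of $\delta_2$ are routine.
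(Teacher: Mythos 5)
Your argument is correct and is essentially the argument the paper intends (the paper states Proposition \ref{small-solution} with only the remark that one can ``easily verify'' it): the Volterra structure of \eqref{mu-integral} for the first column, the bound $|e^{2i\lambda\ell(x,y)}|\le 1$ on the region $y\ge x$ for $\lambda\in\RR\cup\CC^+$ coming from $\ell(x,y)\ge 0$, the resulting $\lambda$-uniform Neumann-series estimate $\|(\varphi_+)_1-e_1\|_{L^\infty}\le \|Q\|_{L^1}/(1-\|Q\|_{L^1})$, and the control of $\|Q\|_{L^1}$ by $\|q\|_{X_1}$ via \eqref{estimate-V} and $X_1\hookrightarrow H^1\hookrightarrow L^\infty$. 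The only cosmetic remark is that the Volterra ordering in fact gives convergence of the Neumann series for arbitrary $\|Q\|_{L^1}$ with bound $e^{\|Q\|_{L^1}}-1$ (as in \eqref{K-inverse}), so the restriction $\|Q\|_{L^1}<1$ is not needed, though it is harmless in the small-data regime.
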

We denote $(\varphi_+)_1$ for the first column of $\varphi_+$.
From \eqref{a-explicit-mu-estimate} and \eqref{small-solution}, we deduce the following.
\begin{lemma} \label{a-nozero}
If $\|q\|_{X_1}$ is sufficient small, then $a(\lambda)$ is bounded away from zero for all $\lambda \in \RR\cup\CC^+$.
\end{lemma}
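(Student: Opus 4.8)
The plan is to feed the two smallness estimates already in hand---Proposition \ref{small-solution} for the column $(\varphi_+)_1$ and the bound \eqref{estimate-V} on $\|Q\|_{L^1}$---directly into the lower bound \eqref{a-explicit-mu-estimate}. Starting from \eqref{a-explicit-mu-estimate}, I would estimate the integral by the $L^1$--$L^\infty$ Hölder inequality as
$$\left|\int_{\RR}Q(\varphi_+)_{21}\,dy\right| \leq \|Q\|_{L^1}\,\|(\varphi_+)_{21}\|_{L^{\infty}},$$
so that $|a| \geq 1-\|Q\|_{L^1}\,\|(\varphi_+)_{21}\|_{L^{\infty}}$; it then suffices to force the product on the right below $1$, uniformly in $\lambda \in \RR\cup\CC^+$.

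For the first factor, \eqref{estimate-V} gives $\|Q\|_{L^1}\lesssim \|q^2q_x\|_{L^1}+\|q_x\|_{L^1}+\|q_x\Jq\|_{L^1}$, and since $q\in X_1$ controls $\|q_x\|_{L^1}$ and $\|q\|_{L^\infty}$, the right-hand side is bounded by a constant $C$ whenever $\|q\|_{X_1}\leq 1$. For the second factor, note that $(\varphi_+)_{21}$ is exactly the second entry of $(\varphi_+)_1-e_1$, because $e_1=(1,0)^\top$; hence Proposition \ref{small-solution} yields $\|(\varphi_+)_{21}\|_{L^\infty}\leq \|(\varphi_+)_1-e_1\|_{L^\infty}<\delta_1$ for every $\lambda\in \RR\cup\CC^+$, once $\|q\|_{X_1}<\delta_2(\delta_1)$.

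Combining these, I would choose $\delta_1=\tfrac{1}{2C}$ and then shrink $\|q\|_{X_1}$ below the corresponding $\delta_2$ (and below $1$), so that $\|Q\|_{L^1}\,\|(\varphi_+)_{21}\|_{L^\infty}\leq \tfrac12$ and therefore $|a(\lambda)|\geq \tfrac12$ for all $\lambda\in \RR\cup\CC^+$. The step I regard as the genuine content is the \emph{uniformity in $\lambda$}: the bound must hold simultaneously on the whole real line and the open upper half-plane, and this is precisely what the $\lambda$-independent estimate of Proposition \ref{small-solution} supplies, so the remaining manipulations are elementary. A minor point to verify separately is that the lower bound survives the exponential prefactor in \eqref{a-explicit-mu}: since $B$ is purely imaginary and $\HH=\Jq-1\geq 0$, the factor $e^{2\int_y^\infty B\,d\tau}$ has modulus one and $|e^{i\lambda\int_{\RR}\HH\,dy}|\leq 1$ on $\CC^+$, so passing to moduli only improves the inequality and justifies the reverse triangle inequality used in \eqref{a-explicit-mu-estimate}, including in the limit $|\lambda|\to\infty$ via Proposition \ref{a-limits}.
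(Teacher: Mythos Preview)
Your proposal is correct and follows exactly the route the paper takes: the paper's own justification is the single sentence ``From \eqref{a-explicit-mu-estimate} and \eqref{small-solution}, we deduce the following,'' and what you have written is a careful unpacking of that sentence. Your additional remarks on the unimodularity of $e^{2\int_y^{\infty}B\,d\tau}$ and the bound $|e^{i\lambda\int_{\RR}\HH\,dy}|\le 1$ on $\CC^+$ are the details that make the passage from \eqref{a-explicit-mu} to \eqref{a-explicit-mu-estimate} rigorous, so nothing is missing.
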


We also add another important property that may be apparent from \eqref{T-matrix-explicit}. We shall show it rigorously.
\begin{lemma} \label{continuity-at-zero}
$\lambda^{-1}(a(\lambda)-1)$ and $\lambda^{-1}b(\lambda)$ are continuous at $\lambda=0$.
\end{lemma}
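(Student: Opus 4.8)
The plan is to extract the explicit integral representations of $a-1$ and $b$ directly from \eqref{T-matrix-explicit}, where a single factor of $\lambda$ has already been pulled out, and then reduce the claimed continuity to the behaviour of the solution $m^{(+)}(\cdot\,;\lambda)$ of \eqref{m-integral-1} as $\lambda\to 0$. Reading off the $(1,1)$ and $(2,1)$ entries of \eqref{T-matrix-explicit} and comparing with \eqref{T-matrix-a-b} gives, for $\lambda\neq 0$,
\[
\frac{a(\lambda)-1}{\lambda} = -\int_{\RR} q(y)\, m_{21}^{(+)}(y;\lambda)\, dy, \qquad \frac{b(\lambda)}{\lambda} = \int_{\RR} e^{2i\lambda y}\,\bar q(y)\, m_{11}^{(+)}(y;\lambda)\, dy.
\]
Since $q\in X_1$ forces $q\in L^1$ (Cauchy--Schwarz against $\langle y\rangle^{-1}\in L^2$), both integrals are well defined, so it suffices to show that each right-hand side has a finite limit as $\lambda\to 0$ and to \emph{define} $\lambda^{-1}(a-1)$ and $\lambda^{-1}b$ at $\lambda=0$ by those limits.

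The key step is to show $m^{(+)}(\cdot\,;\lambda)\to I$ uniformly in $x$ as $\lambda\to 0$. Setting $\lambda=0$ in \eqref{m-integral-1} gives $m^{(+)}(\cdot\,;0)=I$; for real $\lambda\neq 0$, subtracting $I$ and using $|e^{i\lambda(x-y)}|=1$ yields
\[
\|m^{(+)}(\cdot\,;\lambda)-I\|_{L^{\infty}_x}\lesssim |\lambda|\,\|m^{(+)}(\cdot\,;\lambda)\|_{L^{\infty}_x}\,\|q\|_{L^1}.
\]
Here I would invoke the uniform-in-$\lambda$ bound $\sup_{\lambda\in\RR}\|m^{(+)}(\cdot\,;\lambda)\|_{L^{\infty}_x}<\infty$, which is exactly the content of the fixed-point construction behind Proposition \ref{existence-mu} (the resolvent of the integral operator in \eqref{m-integral-1} is bounded independently of real $\lambda$ because $|e^{i\lambda\ell}|=1$). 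Consequently $\|m^{(+)}(\cdot\,;\lambda)-I\|_{L^{\infty}_x}=O(|\lambda|)\to 0$, so in particular $m_{21}^{(+)}\to 0$ and $m_{11}^{(+)}\to 1$ uniformly in $y$.

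I would then conclude by dominated convergence. For the first integral, $\bigl|\int_{\RR} q\, m_{21}^{(+)}\,dy\bigr|\le \|q\|_{L^1}\,\|m_{21}^{(+)}\|_{L^{\infty}_x}\to 0$, so $\lambda^{-1}(a-1)\to 0$. For the second, the integrand converges pointwise to $\bar q(y)$ (since $e^{2i\lambda y}\to 1$ and $m_{11}^{(+)}\to 1$) and is dominated by $C|q|\in L^1$, whence $\lambda^{-1}b\to \int_{\RR}\bar q\,dy$. Both limits are finite, which gives the asserted continuity at $\lambda=0$.

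The only nontrivial ingredient is the uniform-in-$\lambda$ boundedness of $\|m^{(+)}(\cdot\,;\lambda)\|_{L^{\infty}_x}$ near $\lambda=0$, which is what powers the $O(|\lambda|)$ decay; once that is secured from the existence theory, the remainder is routine dominated convergence, so I expect that uniform bound to be the main (and essentially only) obstacle.
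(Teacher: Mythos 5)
Your proof is correct and follows essentially the same route as the paper: both arguments exploit the explicit factor of $\lambda$ multiplying the Volterra integral (you read it off from \eqref{T-matrix-explicit}, the paper from \eqref{b-det} via $(m^{(\pm)})_1=e_1+\lambda I^{(\pm)}(0;\lambda)$) and then use boundedness/continuity of $m^{(\pm)}$ in $\lambda$ near $0$ to pass to the limit. One small imprecision: the operator in \eqref{m-integral-1} carries a factor of $\lambda$, so the naive Volterra bound is $e^{|\lambda|\|q\|_{L^1}}$ rather than a $\lambda$-uniform constant over all of $\RR$ (the genuinely uniform bound comes from the $\varphi_{\pm}$ formulation via \eqref{psi-mu}); but since you only need boundedness for $\lambda$ near $0$, this does not affect the argument.
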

\begin{proof}
From the integral equation \eqref{m-integral-1}, we can write
$$(m^{(\pm})_1=e_1 + \lambda I^{(\pm)}(0;\lambda),$$
where
$$I^{(\pm)}(0;\lambda)=\int_{\pm\infty}^0 e^{i\lambda(x-y)\sigma_3}\mathcal{M}(m^{(\pm)})_1e^{-i\lambda(x-y)\sigma_3}dy.$$
We note that since, for every $x\in \RR^{\pm}$, $m^{(\pm)}(x;\lambda)$ is continuous in $\lambda$
, then $I^{(\pm)}(0;\lambda)$ is also continuous. 
From \eqref{b-det}, using the above notation, we have 
$$b(\lambda)=\lambda\{\det(e_1,I^{(-)}(0;\lambda))+\det(I^{(+)}(0;\lambda),e_1)\}+\lambda^2\det(I^{(-)}(0;\lambda),I^{(+)}(0;\lambda)).$$
It is evident that $\lambda^{-1}b(\lambda)$ is continuous at $\lambda=0$. A similar way applies to showing continuity of $\lambda^{-1}(a(\lambda)-1)$ at $\lambda=0$.
\end{proof}

Finally, we obtain the following.
\begin{theorem} \label{b-a-regularity}
If $q\in X_{n+1}$ and the norm $\|q\|_{X_1}$ is sufficiently small, then $a(\lambda)$ and $b(\lambda)$ defined in \eqref{def-T} satisfy 
$$\lambda^{-1}\frac{b}{a}\in H^1, \quad \lambda^{j}\frac{b}{a}\in H^1$$ 
for $j=0,1,\cdots, n$.
\end{theorem}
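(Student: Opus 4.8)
The plan is to exploit the Banach-algebra structure of $H^1(\RR)$ together with the regularity already recorded for $a$ and $b$. Recall that in one dimension $H^1(\RR)\hookrightarrow L^\infty(\RR)$, so $H^1(\RR)$ is closed under multiplication: if $f,g\in H^1$ then $fg\in L^2$ and $(fg)'=f'g+fg'\in L^2$, and likewise multiplication by any $W^{1,\infty}$ function preserves $H^1$. The only genuine quotient in the statement is by $a$, and the whole difficulty is that $1/a$ is bounded but does not decay, so it cannot itself lie in $H^1$. I would therefore first isolate the non-decaying part of $1/a$ as an explicit unimodular factor and show that the remainder lies in $H^1$.

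Set $c_1=\int_\RR \mathcal{H}\,dy\in\RR$ and $c_2=\int_\RR B\,dy$, which is purely imaginary since $B$ is. Writing $\tilde a:=a\,e^{i\lambda c_1}$, Lemma \ref{a-H1} says precisely that $\tilde a-e^{-c_2}\in L^2$ (multiplication by the unimodular constant $e^{-c_2}$ is harmless) and $\partial_\lambda\tilde a\in L^2$, so $\tilde a-e^{-c_2}\in H^1$. By Lemma \ref{a-nozero} (here is where smallness of $\|q\|_{X_1}$ enters) $a$, hence $\tilde a$, is bounded away from $0$, so $1/\tilde a\in L^\infty$. Writing $\tilde a=e^{-c_2}+r$ with $r\in H^1$ I compute $\tfrac1{\tilde a}-e^{c_2}=-e^{c_2}r/\tilde a$; since $r\in H^1\subset L^2\cap L^\infty$, $r'\in L^2$, and $1/\tilde a\in L^\infty$, both this function and its $\lambda$-derivative $-e^{c_2}\bigl(r'/\tilde a-r\,r'/\tilde a^2\bigr)$ lie in $L^2$, so $\tfrac1{\tilde a}-e^{c_2}\in H^1$. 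Consequently $\tfrac1a=e^{i\lambda c_1}\bigl(e^{c_2}+s\bigr)$ with $s\in H^1$.

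For the factors $\lambda^j b/a$ with $0\le j\le n$, Lemma \ref{b-estimate-2} gives $\lambda^j b\in H^1$. Multiplication by the unimodular $e^{i\lambda c_1}$ preserves $H^1$, so $\beta_j:=\lambda^j b\,e^{i\lambda c_1}\in H^1$, and then $\lambda^j b/a=\beta_j\bigl(e^{c_2}+s\bigr)=e^{c_2}\beta_j+\beta_j s$. The first summand is a constant multiple of an $H^1$ function and the second is a product of two $H^1$ functions, so the Banach-algebra property gives $\lambda^j b/a\in H^1$ for each $j=0,1,\dots,n$.

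The remaining, and hardest, term is $\lambda^{-1}b/a$, where the factor $\lambda^{-1}$ forces attention to $\lambda=0$. Away from the origin this is routine: for a smooth cutoff $\chi$ vanishing near $0$, one has $(1-\chi)\lambda^{-1}\in W^{1,\infty}$ and $b/a\in H^1$ by the previous step, so $(1-\chi)\lambda^{-1}b/a\in H^1$. Near $0$ I would use the factorization from the proof of Lemma \ref{continuity-at-zero}, $b(\lambda)=\lambda B_1(\lambda)+\lambda^2 B_2(\lambda)$ with $B_1,B_2$ built from $I^{(\pm)}(0;\lambda)$, so that $\lambda^{-1}b=B_1+\lambda B_2$; together with $a(0)=1$ and $a$ bounded away from zero this yields $\chi\lambda^{-1}b/a=\chi(B_1+\lambda B_2)/a$. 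The main obstacle is to upgrade Lemma \ref{continuity-at-zero} from mere continuity to local $H^1$ regularity, i.e. to show $B_1,B_2\in H^1_{\mathrm{loc}}$ near $0$; this needs $\lambda$-differentiability of $I^{(\pm)}(0;\lambda)$ with $L^2_\lambda$ derivatives, which I would extract from the $\partial_\lambda$-estimates of Lemma \ref{H1-mu-weight} together with the local $L^2$ control of $\partial_\lambda a$ coming from Lemma \ref{a-H1}. Once $B_1,B_2\in H^1_{\mathrm{loc}}$ and $1/a\in H^1_{\mathrm{loc}}$ with $a$ bounded below, the local Banach-algebra argument closes the estimate near $0$, and gluing the two regions proves $\lambda^{-1}b/a\in H^1$.
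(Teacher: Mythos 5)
Your argument is correct and, for the powers $j=0,1,\dots,n$, it is essentially the paper's proof repackaged: the paper also combines Lemma \ref{b-estimate-2} (giving $\lambda^j b\in H^1$) with Lemmas \ref{a-H1} and \ref{a-nozero} (giving control of $1/a$), but it does so by writing out the quotient rule for $\partial_\lambda(\lambda^j b/a)$ with the unimodular factors $e^{\pm i\lambda\int_\RR\mathcal{H}dx}$ inserted by hand, whereas you first extract those factors once and for all, prove $\tfrac1a=e^{i\lambda c_1}(e^{c_2}+s)$ with $s\in H^1$, and then invoke the Banach-algebra property of $H^1(\RR)$. Your packaging is cleaner and reusable, but buys nothing essentially new there. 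The genuine divergence is at $j=-1$: the paper disposes of $\lambda^{-1}b/a$ in one line, asserting that continuity at $\lambda=0$ (Lemma \ref{continuity-at-zero}) "implies that the weak derivative exists." Taken literally that inference is too weak --- $f\in H^1$ with $f(0)=0$ does not in general give $f/\lambda\in H^1$ (consider $f(\lambda)=\lambda\log(1/|\lambda|)$ near the origin) --- so some local regularity of $\lambda^{-1}b$ beyond continuity really is needed. You correctly diagnose this and propose to upgrade the factorization $b=\lambda B_1+\lambda^2 B_2$ to $B_1,B_2\in H^1_{\mathrm{loc}}$ near $0$ via the $\partial_\lambda$-estimates of Lemma \ref{H1-mu-weight}; this is the right ingredient and the differentiation under the integral defining $I^{(\pm)}(0;\lambda)$ does go through, but note that this step is only sketched in your write-up, and it is the one place where details remain to be supplied --- in fairness, the same details are also absent from the paper. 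Everything else in your proposal is complete and sound.
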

\begin{proof}
Since $\lambda^jb\in L^2$ for $j=0,\cdots, n$ from Lemma \ref{b-estimate-2} and $a$ is continuous and bounded away from zero from Lemmas \ref{a-H1} and \ref{a-nozero}, it is obvious that $\lambda^j\frac{b}{a} \in L^2$ for $j=0, \cdots,n$. 
Next, we consider, 
$$\partial_{\lambda}\left(\lambda^j\frac{b}{a}\right)=\frac{\partial_{\lambda}(a(\lambda)e^{-i\lambda \int_{\RR}\mathcal{H}dx})}{a(\lambda)^2}\lambda^jb(\lambda)e^{i\lambda \int_{\RR}\mathcal{H}dx}+$$
$$\quad+\frac{e^{-i\lambda \int_{\RR}\mathcal{H}dx}}{a(\lambda)}\partial_{\lambda}(\lambda^jb(\lambda)e^{-i\lambda \int_{\RR}\mathcal{H}dx}).$$
All terms in the right-hand side above are estimated in $L^2$ from Lemmas \ref{b-estimate-2}, \ref{a-H1}, and \ref{a-nozero} for $j=0,1,\cdots, n$ if $q\in X_{n+1}$.

For $\frac{1}{\lambda}\frac{b}{a}$, we only care about $\lambda=0$. From Lemma \ref{continuity-at-zero}, $\frac{1}{\lambda}\frac{b}{a}$ is continuous at $\lambda=0$, so it implies that the weak derivative exists and thus $\frac{1}{\lambda}\frac{b}{a} \in H^1$.
\end{proof}
The following Corollary will be used in the inverse problem. Its proof is apparent from Lemmas \ref{b-estimate-2}, \ref{a-H1}, \ref{a-nozero}, and Proposition \ref{small-solution}, that norms of $b(\lambda)/a(\lambda)$ are controlled in terms of $X_n$ norms of $q$. In particular, we pay our attention to smallness assumption on $\|q\|_{X_1}$. 
\begin{corollary} \label{smallness-a-b}
Let $\delta^*>0$ be a small constant such that $\|q\|_{X_1}<\delta^*$ is small enough in the sense of Theorem \ref{b-a-regularity}.  For any small $\delta_1 >0$, there exists $\delta_2>0$ such that if $\|q\|_{X_1}<\delta_2<\delta^*$, then 
$$\left\|\lambda^{-1}\frac{b(\lambda)}{a(\lambda)}\right\|_{L^2}+\left\|\frac{ b(\lambda)}{a(\lambda)}\right\|_{L^2}<\delta_1,$$
where $a$ and $b$ are defined in \eqref{def-T}.
\end{corollary}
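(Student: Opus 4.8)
The plan is to deduce everything from the fact that $a$ is uniformly bounded away from zero, together with the observation that every bound on $b$ established earlier is linear in potential-dependent quantities that vanish as $\|q\|_{X_1}\to 0$. First I would fix $\delta^*$ as in Theorem \ref{b-a-regularity} so that, by Lemma \ref{a-nozero}, there is a constant $c>0$ with $|a(\lambda)|\geq c$ for all $\lambda\in\RR$ whenever $\|q\|_{X_1}<\delta^*$. This immediately gives
$$\left\|\frac{b}{a}\right\|_{L^2}\leq c^{-1}\|b\|_{L^2}, \qquad \left\|\lambda^{-1}\frac{b}{a}\right\|_{L^2}\leq c^{-1}\|\lambda^{-1}b\|_{L^2},$$
so it suffices to show that $\|b\|_{L^2}$ and $\|\lambda^{-1}b\|_{L^2}$ can be made smaller than any prescribed threshold by shrinking $\|q\|_{X_1}$.

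For $\|b\|_{L^2}$ I would invoke the representation \eqref{b-estimate-form} with $k=0$ (so $\mathcal{R}_0=I$), which yields, exactly as in the proof of Lemma \ref{b-estimate-2}, the bound
$$\|b\|_{L^2}\leq K\bigl(\|\varphi_+(0;\cdot)\|_{L^\infty}\,\|\varphi_-(0;\cdot)-I\|_{L^2_\lambda}+\|\varphi_+(0;\cdot)-I\|_{L^2_\lambda}\bigr).$$
The factor $\|\varphi_+(0;\cdot)\|_{L^\infty}$ stays bounded (indeed close to $1$) by Proposition \ref{small-solution}, so the task reduces to showing $\|\varphi_\pm-I\|_{L^2_\lambda}\to 0$. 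Writing $\varphi_\pm-I=(I-K_{\pm\infty})^{-1}K_{\pm\infty}I$ from \eqref{estimate-integral-1}, I would combine the uniform bound $\|(I-K_{\pm\infty})^{-1}\|\leq e^{2\|Q\|_{L^1}}$ from \eqref{K-inverse} with $\|K_{\pm\infty}I\|_{L^2_\lambda}\lesssim \|Q\|_{L^2}$ from Proposition \ref{Fourier-estimate}. Since the explicit formula for $Q$ shows $Q$ to be a smooth expression in $q,q_x$ vanishing at the origin, both $\|Q\|_{L^1}$ and $\|Q\|_{L^2}$ are controlled by, and tend to $0$ with, $\|q\|_{X_1}$; hence $\|\varphi_\pm-I\|_{L^2_\lambda}$ and therefore $\|b\|_{L^2}$ become arbitrarily small. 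The same reasoning immediately controls $\|\lambda^{-1}b\|_{L^2}$ on the region $|\lambda|\geq 1$, where $\|\lambda^{-1}b\|_{L^2(|\lambda|\geq1)}\leq\|b\|_{L^2}$.

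The only genuinely delicate point, and the step I expect to be the main obstacle, is the contribution to $\|\lambda^{-1}b\|_{L^2}$ near $\lambda=0$, where the extra factor $\lambda^{-1}$ threatens to destroy smallness. Here I would use the factorized representation from the proof of Lemma \ref{continuity-at-zero},
$$\lambda^{-1}b=\det(e_1,I^{(-)}(0;\lambda))+\det(I^{(+)}(0;\lambda),e_1)+\lambda\det(I^{(-)}(0;\lambda),I^{(+)}(0;\lambda)),$$
in which each $I^{(\pm)}(0;\lambda)=\int_{\pm\infty}^0 e^{i\lambda(x-y)\sigma_3}\mathcal{M}(m^{(\pm)})_1 e^{-i\lambda(x-y)\sigma_3}dy$ obeys the pointwise bound $|I^{(\pm)}(0;\lambda)|\leq \|m^{(\pm)}\|_{L^\infty}\|q\|_{L^1}$. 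Since $\|m^{(\pm)}\|_{L^\infty}$ is bounded for small $q$ and $\|q\|_{L^1}\lesssim\|q\|_{X_1}$ by Cauchy--Schwarz, $\lambda^{-1}b$ is bounded uniformly on $|\lambda|\leq 1$ by a constant that vanishes with $\|q\|_{X_1}$, so integrating over the bounded interval $|\lambda|\leq 1$ produces a small $L^2$ contribution. Combining the two regions shows $\|\lambda^{-1}b\|_{L^2}$ is small, and together with the reduction of the first paragraph this proves the claim once $\delta_2$ is chosen so that each of the two pieces lies below $\delta_1/2$. It is precisely this $\lambda=0$ analysis that forces reliance on the factorized form of $b$ from Lemma \ref{continuity-at-zero} rather than a naive division by $\lambda$.
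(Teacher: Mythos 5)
Your proposal is correct and follows essentially the route the paper intends: the paper gives no written proof beyond asserting that the corollary "is apparent" from Lemmas \ref{b-estimate-2}, \ref{a-H1}, \ref{a-nozero} and Proposition \ref{small-solution}, and your argument is exactly a quantitative re-run of those results, tracking that every bound on $b$ is proportional to quantities ($\|Q\|_{L^1}$, $\|Q\|_{L^2}$, $\|q\|_{L^1}$) that vanish with $\|q\|_{X_1}$ while $|a|$ stays bounded below. Your explicit treatment of $\lambda^{-1}b$ near $\lambda=0$ via the factorization from Lemma \ref{continuity-at-zero} is a detail the paper silently omits, and it is handled correctly.
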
 
 
\section{The inverse power transformation $z=-\frac{1}{\lambda}$} \label{inverse-transformation}
Finally, here we put all previous results together to relate solution of the Wadati-Konno-Ichikawa spectral probem to solution of the normalized Riemann-Hilbert problem.
Here we make the change of variable in the spectral variable, 
$$z:=-\frac{1}{\lambda}.$$
It is clear that $\mbox{Im}(z)\gtrless 0$ if and only if $\mbox{Im}({\lambda})\gtrless 0$. 
What follows is that we have the consistent analyticity for $m^{\pm}(z^{-1})$ and $a(z^{-1})$, i.e., domains of analyticity of $m^{\pm}(\lambda)$ in the $\lambda$-complex plane coincide with domains of $m^{\pm}(-z^{-1})$ in the $z$-complex plane. What is better in the $z$-variable is that the sectionally analytic function $m(-1/z)$ is normalized as $z\rightarrow \infty$ and gives the useful reconstruction formula for the potential $q$.
From the integral equation \eqref{m-integral-1}, we obtain the following result that is given in the $z$-variable.
\begin{proposition}  \label{m-analyticity-reconstruct}
If $q\in X_1$, then $[(m^{(+)})_1, (m^{(-)})_2]$ is analytic in $\CC^+_z$, and $[(m^{(-)})_1, (m^{(+)})_2]$ is analytic in $\CC^-_z$. Their asymptotic behaviors at infinity are
$$[(m^{(+)})_1, (m^{(-)})_2] \rightarrow I, \quad [(m^{(-)})_1, (m^{(+)})_2] \rightarrow I$$
as $|z|\rightarrow \infty$ in their domains of analyticity.
Furthermore, the first term $m_1$ in the series 
$$[(m^{(+)})_1, (m^{(-)})_2]=I+\frac{m^{(1)}}{z} + \mathcal{O}(\frac{1}{z^2})$$
gives $$\partial_xm^{(1)}=\mathcal{M}=\begin{pmatrix} 0 & q\\ -\bar{q} & 0 \end{pmatrix}.$$
The same holds for $[(m^{(-)})_1, (m^{(+)})_2]$.
\end{proposition}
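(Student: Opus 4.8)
The plan is to transport the known properties of $m^{(\pm)}(\lambda)$ — established in Propositions \ref{existence-mu}, \ref{analyticity-mu}, and \ref{m-limits} — into the $z$-variable, and then to extract the reconstruction formula by reading off the large-$z$ expansion directly from the integral equation \eqref{m-integral-1}. First I would observe that the map $\lambda\mapsto z=-1/\lambda$ is a biholomorphism of $\CC^+$ onto $\CC^+$ and of $\CC^-$ onto $\CC^-$ (it sends $\mathrm{Im}\,\lambda\gtrless 0$ to $\mathrm{Im}\,z\gtrless 0$, as remarked before the statement), so the columns that are analytic in $\CC^+_\lambda$ in Proposition \ref{analyticity-mu} remain analytic as functions of $z$ on $\CC^+_z$, and symmetrically in the lower half-plane. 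The point $|\lambda|\to\infty$ becomes $z\to 0$, which is exactly where Proposition \ref{m-limits} shows $m^{(\pm)}$ fails to be normalized; the genuinely useful regime is instead $\lambda\to 0$, i.e. $|z|\to\infty$, and this is where the normalization $m\to I$ must come from.

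To get the asymptotics as $|z|\to\infty$ I would work from the integral equation \eqref{m-integral-1} with $\lambda=-1/z$. As $|z|\to\infty$ we have $\lambda\to 0$, and the prefactor $\lambda$ multiplying the integral forces the whole correction term to vanish, giving $[(m^{(+)})_1,(m^{(-)})_2]\to I$; the oscillatory exponentials $e^{\pm i\lambda(x-y)\sigma_3}$ tend to $I$ as well, so no cancellation is lost. The continuity of $m^{(\pm)}$ in $\lambda$ down to $\lambda=0$ (Proposition \ref{existence-mu} and the continuity used in Lemma \ref{continuity-at-zero}) makes this limit legitimate rather than merely formal. For the expansion I would substitute the ansatz $[(m^{(+)})_1,(m^{(-)})_2]=I+m^{(1)}/z+\mathcal{O}(1/z^2)$ into \eqref{m-integral-1}: writing $\lambda=-1/z$, the leading correction is $-\lambda\int_{\pm\infty}^x\mathcal{M}\,dy$ to leading order, and matching the $1/z$ coefficient yields $m^{(1)}=\int_{\pm\infty}^x\mathcal{M}\,dy$ (up to the column structure), whence $\partial_x m^{(1)}=\mathcal{M}$ by the fundamental theorem of calculus.

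The main obstacle is justifying the expansion rigorously: I must show that the remainder is genuinely $\mathcal{O}(1/z^2)$ uniformly, which requires controlling one more iteration of the integral operator and confirming that $m^{(\pm)}$ is differentiable (or at least admits the stated asymptotic series) as $\lambda\to 0$. The content of Lemma \ref{continuity-at-zero}, together with the Neumann-series bound \eqref{K-inverse} for $(I-K_{\pm\infty})^{-1}$, supplies exactly the continuity and boundedness at $\lambda=0$ needed to expand the solution and bound the next-order term, so I would invoke those to close the estimate. The claim for $[(m^{(-)})_1,(m^{(+)})_2]$ then follows by the identical argument with the roles of $+\infty$ and $-\infty$ interchanged, or more economically by the symmetry reduction \eqref{T-matrix-a-b} of the spectral problem.
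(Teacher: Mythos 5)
Your proposal is correct and follows essentially the same route as the paper, which derives this proposition directly from the integral equation \eqref{m-integral-1} together with the analyticity statements already established in the $\lambda$-variable (Propositions \ref{analyticity-mu} and \ref{m-limits}); your expansion $m^{(\pm)}=I-\lambda\int_{\pm\infty}^x\mathcal{M}\,dy+\mathcal{O}(\lambda^2)$ with $\lambda=-1/z$ is exactly the intended argument for the reconstruction formula $\partial_x m^{(1)}=\mathcal{M}$. The only points worth making explicit are that $q\in X_1$ gives $\mathcal{M}\in L^1$ (so the leading integral converges and the $\mathcal{O}(1/z^2)$ remainder is controlled via the bounded resolvent \eqref{K-inverse}), and that the relevant exponentials are bounded by $1$ on the columns in question, both of which you have correctly identified.
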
  

\begin{proposition} \label{a-analytic}If $q\in X_1$, then
$a(-1/z)$ is analytic in $\CC^+$ with 
$$a(-1/z) =1+\mathcal{O}(\frac{1}{z^2}),$$ 
for $z\in \CC^+$ in the neighborhood of $\infty$.
\end{proposition}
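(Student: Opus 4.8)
The plan is to reduce the statement to the large-$z$ expansion of the analytic matrix function $M(z):=[(m^{(+)})_1, (m^{(-)})_2]$ furnished by Proposition \ref{m-analyticity-reconstruct}, and then to read off both assertions from its determinant. First I would record that $a$ admits a clean expression in terms of the $m^{(\pm)}$. Starting from the determinant formula \eqref{a-det}, $a(\lambda)=\det((\psi_+)_1,(\psi_-)_2)$, and substituting $\psi_\pm=m^{(\pm)}e^{i\lambda\sigma_3 x}$, the scalar factors $e^{i\lambda x}$ and $e^{-i\lambda x}$ produced by the two columns cancel inside the determinant, giving
$$a(\lambda)=\det((m^{(+)})_1,(m^{(-)})_2)=\det M(z),\qquad z=-1/\lambda,$$
which is manifestly $x$-independent, as it must be. Analyticity is then immediate: $a(\lambda)$ is analytic in $\CC^+$ in the $\lambda$-plane by Proposition \ref{a-limits}, and since the M\"obius map $z=-1/\lambda$ is a conformal bijection of the upper half $\lambda$-plane onto the upper half $z$-plane (the half-plane correspondence recorded at the start of Section \ref{inverse-transformation}), $a(-1/z)=\det M(z)$ is analytic in $\CC^+$ in the $z$-plane.

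For the asymptotics I would invoke $M(z)=I+z^{-1}m^{(1)}+\mathcal{O}(z^{-2})$ as $|z|\to\infty$ in $\CC^+$, from Proposition \ref{m-analyticity-reconstruct}. Using the $2\times2$ identity $\det(I+C)=1+\mathrm{tr}\,C+\det C$ with $C=z^{-1}m^{(1)}+\mathcal{O}(z^{-2})$, one obtains
$$a(-1/z)=\det M(z)=1+\frac{\mathrm{tr}\,m^{(1)}}{z}+\mathcal{O}(z^{-2}).$$
Thus everything reduces to showing that the $z^{-1}$ coefficient vanishes, i.e. $\mathrm{tr}\,m^{(1)}=0$. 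This is the one genuinely nontrivial point, and it is the main obstacle; note it does \emph{not} follow merely from $a(0)=1$, since Lemma \ref{continuity-at-zero} only yields $a-1=\mathcal{O}(\lambda)$, which is $\mathcal{O}(z^{-1})$ and would a priori permit a nonzero first-order term.

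To establish $\mathrm{tr}\,m^{(1)}=0$ I would exploit that $m^{(1)}$ is diagonal-free. Proposition \ref{m-analyticity-reconstruct} gives $\partial_x m^{(1)}=\mathcal{M}$, and $\mathcal{M}$ is off-diagonal, so $\partial_x m^{(1)}_{11}=\partial_x m^{(1)}_{22}=0$; hence both diagonal entries are constant in $x$. Now $m^{(1)}_{11}$ is the $z^{-1}$-coefficient of $(m^{(+)})_{11}$, and since $(m^{(+)})_1\to e_1$ as $x\to+\infty$ this coefficient tends to $0$; being $x$-constant, it is identically $0$. Symmetrically, $m^{(1)}_{22}$ is the $z^{-1}$-coefficient of $(m^{(-)})_{22}$, which vanishes as $x\to-\infty$ and is $x$-constant, hence $0$. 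Therefore $\mathrm{tr}\,m^{(1)}=0$ and $a(-1/z)=1+\mathcal{O}(z^{-2})$. Alternatively, expanding the integral equation \eqref{m-integral-1} to first order identifies the $z^{-1}$-coefficient of $m^{(\pm)}$ explicitly as $\int_{\pm\infty}^x\mathcal{M}\,dy$, which is off-diagonal, giving the same cancellation directly. With that cancellation in hand, the analyticity statement and the determinant bookkeeping are routine, completing the proof.
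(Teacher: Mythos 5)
Your proof is correct and follows the route the paper implicitly intends (it states this proposition without a written proof, immediately after Proposition \ref{m-analyticity-reconstruct}): the identity $a(-1/z)=\det\bigl((m^{(+)})_1,(m^{(-)})_2\bigr)$ from \eqref{a-det}, together with the off-diagonality of $m^{(1)}$ (equivalently $m^{(1)}=\int_{\pm\infty}^x\mathcal{M}\,dy$ from expanding \eqref{m-integral-1}), is exactly the content needed, and you correctly identify $\operatorname{tr}m^{(1)}=0$ as the nontrivial point. The same cancellation can be read off even more directly from \eqref{T-matrix-explicit}, which gives $a(\lambda)=1-\lambda\int_{\RR}q\,m^{(+)}_{21}\,dy$ with $m^{(+)}_{21}=\mathcal{O}(\lambda)$ as $\lambda\to 0$, but this is the same computation in different clothing.
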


We define the reflection coefficient 
\begin{equation}\label{r-def}
r(z):=\frac{b(-1/z)}{a(-1/z)}.
\end{equation}
In above, if we write $r(-1/\lambda)$, then it simply means $r(-1/\lambda)=\frac{b(\lambda)}{a(\lambda)}.$ From Theorem \ref{b-a-regularity}, $\lambda^j r(1/\lambda)$ belongs to $H^1_{\lambda}$ for $-1\leq j \leq n$ if $q\in X_{n+1}$ and $q$ is sufficiently small in $X_1$ norm.

To this end, we define the sectionally analytic matrix function by

\begin{equation} \label{normalized-m}
m(z)=\left\{ \begin{matrix} \left[ \frac{1}{a(-1/z)e^{-\frac{i}{z}\int_{\RR}\mathcal{H}dy}} (m^{(+)})_1e^{-\frac{i}{z}\int_{x}^{\infty}\mathcal{H}dy}, (m^{(-)})_2e^{-\frac{i}{z}\int_{-\infty}^x\mathcal{H}dy}\right] & z \in \CC^+ \\
 [(m^{(-)})_1e^{\frac{i}{z}\int_{-\infty}^x\mathcal{H}dy},  \frac{1}{\overline{a}(-1/\bar{z})e^{\frac{i}{z}\int_{\RR}\mathcal{H}dy}} (m^{(+)})_2e^{\frac{i}{z}\int_{x}^{\infty}\mathcal{H}dy}] & z \in \CC^- 
 \end{matrix} \right.
 \end{equation}
 Recall $\HH=\Jq-1$.
This sectionally analytic matrix is normalized correctly, i.e., $m(z) \rightarrow I$ as $|z|\rightarrow \infty$ and the non-tangential limits of $m(z)$ as $|z|\rightarrow 0$ are bounded. The direct computation can verify that 
the jump condition on the real line is found as 
\begin{equation} \label{jump-RHP}
m_+= m_- \begin{pmatrix} 1+|r|^2 & \bar{r}e^{-\frac{2i}{z}(x+\int_{-\infty}^x\mathcal{H}dy)} \\
r e^{\frac{2i}{z}(x+\int_{-\infty}^x\mathcal{H}dy)} & 1 \end{pmatrix} \quad z\in \RR,
\end{equation}
where $m_{\pm}$ are the non-tangential limits of $m(z)$ to $z\in \RR$ from $\CC^{\pm}$.

At $z=0$, since $r(0)=0$, we have that $m_+=m_-$, i.e., the non-tangential limits coincide at the origin. From Propositions \ref{m-limits} and \ref{a-limits}, the non-tangential limits of $m(z)$ in \eqref{normalized-m} at $z=0$ are 
$$m(z) \rightarrow \mathcal{G}e^{\sigma_3\int_{-\infty}^xBdy}$$
as $z\rightarrow 0$ for $z\in \CC^{\pm}$. This is consistent with $m_+=m_-$ at $z=0$, as we found $m_{\pm}(0)= \mathcal{G}e^{\sigma_3\int_{-\infty}^xBdy}$ above.
 
 However, the jump condition \eqref{jump-RHP} contains the potential $q$ in $\HH$ that we want to recover in the inverse problem. In the latter section, we make a final modification to address appearance of $q$.
 
\section{The RHP after the change of space coordinate} \label{change-space-variable} 
\subsection{Change of space coordinate--new notations $x_{\HH}$ and $q_{\HH}$}  

In order to address appearance of $q$ in the jump condition, we introduce 
\begin{equation} \label{xH}
x_{\HH}:=x+\int_{-\infty}^x\HH(y) dy,
\end{equation}
where $\HH=(\langle q\rangle -1)$.
If $q \in L^1$, $x_{\HH}$ is continuous and monotone increasing in $x$ and $\int_{\RR}\HH dy$ is bounded, so we see that for every $x\in \RR$, there exists a unique corresponding value $x_{\HH} \in \RR$, i.e., $x\mapsto x_{\HH}$ is one-to-one and onto. 
In the following, $x_c$ is a negative value such that $x_{\HH}=x_c+\int_{-\infty}^{x_c}\HH dy=0$. 

\begin{figure}[htbp]
\centering
\begin{tikzpicture}[decoration={markings,
mark=at position 1 with {\arrow[line width=1pt]{>}}}
]

\path[draw, postaction=decorate] (-4,0) node[above] {$-\infty$} -- (4,0) node[above]{$x$};
\path[draw, postaction=decorate] (-4,-2) node[below] {$-\infty$} -- (4,-2) node[below]{$x_{\HH}$};
\path[draw] (-2,0) node[above]{$x_c$} -- (0, -2) node[below]{$0$} [dashed];
\path[draw] (0,0) node[above]{$0$} -- (2, -2) node[below]{$\int_{-\infty}^{0}\HH dy$} [dashed];
\end{tikzpicture}
\end{figure}

 We denote the potential defined on $x_{\HH}$ variable as $q_{\HH}(x_{\HH})$. The relation to $q(x)$ is simply
$$q_{\HH}(x_{\HH})=q_{\HH}(x+\int_{-\infty}^x\HH dy)=q(x),$$
i.e., $q(x)$ is obtained by $q_{\HH}(x)$ after translation $\int_{-\infty}^x\HH dy$ in the argument of $q_{\HH}(x)$.

To this end, we reformulate the jump condition \eqref{jump-RHP} as
\begin{equation} \label{jump-RHP-2}
m_+= m_- \begin{pmatrix} 1+|r|^2 & \bar{r}e^{-\frac{2i}{z}x_{\HH}} \\
r e^{\frac{2i}{z}x_{\HH}} & 1 \end{pmatrix} \quad z\in \RR.
\end{equation}
In the $x_{\HH}$ variable, the reconstruction formula $\partial_xm^{(1)} = \mathcal{M}$ after the change of variable gives
\begin{equation}\label{reconstruct-potential}
q_{\HH}\langle q_{\HH}\rangle^{-1} =\partial_{x_{\HH}}m^{(1)}_{12},
\end{equation}
where we recall $\lim_{|z|\rightarrow\infty, z\in \CC^+}z \;m(z)=m^{(1)}=\begin{pmatrix}m^{(1)}_{11} & m^{(1)}_{12}\\m^{(1)}_{21} & m^{(1)}_{22}\end{pmatrix}$ and $\partial_{x_{\HH}}$ is a partial derivative with respect to $x_{\HH}$.
\subsection{How to construct $x$ in the inverse problem}
However, when the inverse problem concerns, we have no a-priori knowledge of the potential $q$, which means that the variable $x$ must be recovered as well after $q_{\HH}$ is recovered. We proceed as follows.

We notice that if $|\partial_{x_{\HH}}m_{12}^{(1)}|<1$ we can recover $q_{\HH}$ in \eqref{reconstruct-potential}. Indeed, solve for $|q_{\HH}|^2$ in \eqref{reconstruct-potential} and obtain
$$|q_{\HH}|^2=\frac{|\partial_{x_{\HH}}m_{12}^{(1)}|^2}{1-|\partial_{x_{\HH}}m_{12}^{(1)}|^2} <\infty$$
which needs $|\partial_{x_{\HH}}m_{12}^{(1)}|<1$. At this moment, we suppose that $|\partial_{x_{\HH}}m_{12}^{(1)}|<1$ as this will be proven later, then we determine $q_{\HH}$ by
$$
q_{\HH} =\langle q_{\HH}\rangle\partial_{x_{\HH}}m_{12}^{(1)}.
$$ 
In order to recover $x$, from \eqref{xH} and using definition $q(x)=q_{\HH}(x+\int_{-\infty}^x\HH dy)$, we have

$$x_{\HH}=x+\epsilon(x)$$
where
\begin{equation} \label{ode-integral}
\epsilon(x)=\int_{-\infty}^x(\langle q_{\HH}(y+\epsilon(y))\rangle-1)dy.
\end{equation}
To recover $x$ uniquely, a unique existence of $\epsilon$ must be established. 
\begin{lemma} \label{epsilon-lemma}
Suppose that $q_{\HH} \in X_1$. Then there exists a unique monotone increasing bounded continuous solution $\epsilon(x)$ in \eqref{ode-integral} for $x\in (-\infty,\infty)$ with $\epsilon(-\infty)=0$. 
\end{lemma}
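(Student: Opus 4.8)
The plan is to recast the integral equation \eqref{ode-integral} as a first-order autonomous ODE and solve it explicitly by separation of variables, obtaining existence and uniqueness at once. First I would record the structural features that make this possible. Since $\langle q_{\HH}\rangle=\sqrt{1+|q_{\HH}|^2}\geq 1$, the integrand in \eqref{ode-integral} is nonnegative, so any solution $\epsilon$ is automatically monotone increasing with $\epsilon(-\infty)=0$ built into the lower limit. Because $q_{\HH}\in X_1\subset H^1(\RR)$, the function $q_{\HH}$ is continuous, bounded, and decays at infinity; hence $g(w):=\langle q_{\HH}(w)\rangle$ is continuous with $1\leq g\leq \sqrt{1+\|q_{\HH}\|_{L^\infty}^2}$, while $g-1\leq|q_{\HH}|\in L^1$ (the inclusion $X_1\subset L^1$ follows from Cauchy--Schwarz applied to $\langle x\rangle^{-1}\cdot\langle x\rangle q_{\HH}$). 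A further a priori bound $0\leq\epsilon(x)\leq\|q_{\HH}\|_{L^1}$ comes from the change of variables $u=y+\epsilon(y)$, whose Jacobian $1+\epsilon'\geq 1$ gives $\int_\RR|q_{\HH}(y+\epsilon(y))|\,dy\leq\|q_{\HH}\|_{L^1}$.

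Next I would establish the equivalence of \eqref{ode-integral} with an ODE. Given a bounded continuous solution $\epsilon$, the integrand $y\mapsto g(y+\epsilon(y))-1$ is continuous and integrable, so $\epsilon$ is in fact $C^1$ with $\epsilon'(x)=g(x+\epsilon(x))-1$. Setting $w(x):=x+\epsilon(x)=x_{\HH}$ turns this into $w'=g(w)$ subject to the asymptotic normalization $w(x)-x\to 0$ as $x\to-\infty$; conversely, any $C^1$ solution of this ODE with that asymptotics integrates back to \eqref{ode-integral} (integrability near $-\infty$ is automatic since $\epsilon'\geq 0$ and $\epsilon$ is bounded). I would then separate variables: as $g\geq 1>0$, define $G(w):=\int_0^w ds/g(s)$, which is $C^1$, strictly increasing, and—using $1/g\geq 1/\sqrt{1+\|q_{\HH}\|_{L^\infty}^2}$—surjective onto $\RR$, hence a bijection. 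The ODE becomes $\tfrac{d}{dx}G(w(x))=1$, so $G(w(x))=x+C$ and $w(x)=G^{-1}(x+C)$.

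Finally I would pin the constant $C$ using the boundary condition, which is where the only real care is needed. Writing $w-G(w)=\int_0^w(1-1/g(s))\,ds$ and using $0\leq 1-1/g\leq g-1\in L^1$, one gets the finite-limit expansions $G(w)=w+d_-+o(1)$ as $w\to-\infty$ and $G(w)=w-e_++o(1)$ as $w\to+\infty$ with $d_-,e_+\geq 0$. Substituting into $G(w(x))=x+C$ and demanding $\epsilon(x)=w(x)-x\to 0$ as $x\to-\infty$ forces $C=d_-$; this determines $w=G^{-1}(x+d_-)$ uniquely, and the $+\infty$ asymptotics give $\epsilon(+\infty)=d_-+e_+<\infty$, so the resulting $\epsilon$ is bounded, continuous, and monotone increasing as claimed. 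I expect the main obstacle to be the rigorous handling of the condition at $-\infty$—verifying that the improper integral defining $\epsilon$ converges, that the asymptotics of $G$ are valid, and that the normalization selects a single admissible $C$—whereas existence and regularity then follow routinely from the explicit formula $\epsilon(x)=G^{-1}(x+d_-)-x$. A contraction-mapping argument on $L^\infty$ over half-lines would be an alternative, but it would require a Lipschitz bound on $g$ that $X_1$ does not readily supply, so the separation-of-variables route is preferable.
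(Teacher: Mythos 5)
Your proof is correct, but it takes a genuinely different route from the paper. The paper runs a Picard iteration directly on \eqref{ode-integral}: it shows the map $\epsilon\mapsto\int_{-\infty}^x(\langle q_{\HH}(y+\epsilon(y))\rangle-1)dy$ is a contraction in $L^{\infty}(-\infty,x_0)$ using the $L^1$-in-$x$ divided-difference bound $\sup_{\delta}\|\delta^{-1}(\langle q_{\HH}(\cdot+\delta)\rangle-\langle q_{\HH}\rangle)\|_{L^1(-\infty,x_0)}<1$ (which $X_1\subset W^{1,1}$ does supply, contrary to your closing worry about Lipschitz control --- the needed modulus is an integrated one, bounded by $\|\partial_xq_{\HH}\|_{L^1(-\infty,x_0)}$), and then glues solutions over finitely many subintervals; monotonicity and boundedness come afterwards from $\epsilon'=\langle q_{\HH}(x+\epsilon)\rangle-1\geq 0$. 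You instead exploit the autonomous scalar structure: with $w=x+\epsilon$ the equation is $w'=g(w)$, $g=\langle q_{\HH}\rangle\geq 1$ continuous, so $G(w)=\int_0^w ds/g(s)$ is a $C^1$ bijection of $\RR$ and every solution satisfies $G(w(x))=x+C$; the normalization at $-\infty$ pins $C=d_-$ via the asymptotics $G(w)=w+d_-+o(1)$, which rest on $0\leq 1-1/g\leq g-1\leq|q_{\HH}|\in L^1$. Your route yields an explicit global formula $\epsilon(x)=G^{-1}(x+d_-)-x$, gives existence, uniqueness, monotonicity and the two-sided bound $0\leq\epsilon\leq d_-+e_+$ in one stroke, and cleanly isolates the only delicate point (the boundary condition at $-\infty$); uniqueness for the non-Lipschitz ODE is legitimate here precisely because $g$ is continuous and bounded below by $1$. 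What it costs is generality: it depends on the right-hand side being a function of $w=x+\epsilon$ alone, whereas the paper's iteration scheme is the generic machinery that would survive a non-autonomous perturbation and matches the fixed-point arguments used elsewhere in the paper. Both arguments are complete for the statement as given.
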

\begin{proof}
Construct a sequence $\{\epsilon_k\}$ from
$$\epsilon_k(x)=\int_{-\infty}^x(\langle q_{\HH}(y+\epsilon_{k-1}(y))\rangle-1)dy,$$
and we find that 
\begin{align*}
\epsilon_{k+1}(x)-\epsilon_{k}(x)&=\int_{-\infty}^x(\langle q_{\HH}(y+\epsilon_{k}(y))\rangle-\langle q_{\HH}(y+\epsilon_{k-1}(y))\rangle)dy \\
&\leq \int_{-\infty}^x\left| \frac{\langle q_{\HH}(y+\epsilon_{k}(y))\rangle-\langle q_{\HH}(y+\epsilon_{k-1}(y))\rangle}{\epsilon_{k}(y)-\epsilon_{k-1}(y)} \right| dy \|\epsilon_{k}-\epsilon_{k-1}\|_{L^{\infty}} \\
&\leq \sup_{\delta \in \RR}\left\| \frac{\langle q_{\HH}(\cdot+\delta)\rangle-\langle q_{\HH}\rangle}{\delta} \right\|_{L^1(-\infty,x)}\|\epsilon_{k}-\epsilon_{k-1}\|_{L^{\infty}}.
\end{align*}
If $q\in X_1$, then there exists some $x_0 \in \RR$ such that $M=\sup_{\delta \in \RR}\left\| \frac{\langle q_{\HH}(\cdot+\delta)\rangle-\langle q_{\HH}\rangle}{\delta} \right\|_{L^1(-\infty,x_0)}<1$. We conclude that the sequence $\{\epsilon_k(x)\}$ is Cauchy in $L^{\infty}(-\infty,x_0)$ space since $\|\epsilon_m-\epsilon_n\|_{L^{\infty}(-\infty,x_0)}\leq \sum_{j=n}^{m-1}\|\epsilon_{j+1}-\epsilon_{j}\|_{L^{\infty}(-\infty,x_0)}\leq \frac{M^n}{1-M}\|\epsilon_1-\epsilon_0\|_{L^{\infty}(-\infty,x_0)} \rightarrow 0$ as $n\rightarrow \infty$ for $m>n$. 

Next, in order to extend the existence interval, consider 
$
\epsilon(x)=\epsilon(x_0)+\int_{x_0}^x(\langle q_{\HH}(y+\epsilon(y))\rangle-1)dy
$
and repeat the same argument to conclude a unique existence of $\epsilon(x)$ in $L^{\infty}(x_0,x_1)$ for $x_1>x_0$. Since $q \in X_1$, there are a finite number of subintervals of $\RR$ such that $M<1$ for each interval. By uniqueness, this proves a unique existence of $\epsilon(x)$ for $x \in (-\infty,\infty)$. 

Lastly, the derivative of $\epsilon(x)$ from \eqref{ode-integral} is $\epsilon'(x)=\langle q_{\HH}(x+\epsilon(x))\rangle -1>0$. Since $q_{\HH}\in H^1\subset X_1$, $\epsilon'(x)$ is bounded. This implies that $\epsilon(x)$ is continuous and monotone increasing. 
\end{proof}

With $\epsilon(x)$ given in Lemma \ref{epsilon-lemma}, we see that the variable $x\in \RR$ can be uniquely determined from
$x_{\HH}=x+\epsilon(x)$
with respect to $x_{\HH}\in \RR$. For example, the solid line below corresponds to $x+\epsilon(x)$, 
\begin{figure}[htbp]
\centering
\begin{tikzpicture}[decoration={markings,
mark=at position 1 with {\arrow[line width=1pt]{>}}}
]

\path[draw, postaction=decorate] (-3,0) node[above] {} -- (3,0) node[above]{$x$};
\path[draw, postaction=decorate] (0,-3) node[below] {} -- (0,3) node[right]{$x_{\HH}$};
\path[draw] (-3,-3) -- (3, 3) [dashed];
\draw plot [smooth, tension=0.3] coordinates { (-3,-2.8) (0,0.8) (2, 2.8) (3,3.8) };
\end{tikzpicture}
\end{figure}

\subsection{Useful explicit formula for $x$}
In the previous subsection, we gave a basic procedure to recover $q(x)$ as $q_{\HH}\rightarrow x \rightarrow q(x)$ provided that $q_{\HH} \in X_1$. Here, we will give the formula that relates $x$ and $x_{\HH}$ in terms of the limit of $m(z)$ defined in \eqref{normalized-m}. We will not use this to estimate $q(x)$, as the procedure above is sufficient, but the following formula is particularly useful in study of long time asymptotic solutions and explicit soliton solutions.

We denote $(m(z))_{11}$ as (1,1) th matrix element of $m(z)$ defined in \eqref{normalized-m}. From Propositions \ref{m-analyticity-reconstruct} and \ref{a-analytic}, we easily see that 
$$(m(z))_{11}=1+\frac{i}{z}\int_{-\infty}^{x}(\langle q(y)\rangle -1)dy+\mathcal{O}(z^{-2}).$$
From this, we obtain
\begin{equation} \label{x-explicit}
\lim_{z\rightarrow \infty} z((m(z))_{11}-1)=i\int_{-\infty}^x\HH dy,
\end{equation}
where $z\rightarrow \infty$ is taken in the upper-half plane of $\CC$. From \eqref{xH} and \eqref{x-explicit}, we have the relation
$$x_{\HH}-x=\frac{1}{i}\lim_{z\rightarrow \infty} z((m(z))_{11}-1).$$
The right-hand side can be determined from solution of the Riemann-Hilbert problem. This is why it is useful for estimates as well as explicit soliton solutions. We will use \eqref{x-explicit} to derive soliton solution. 
 
\subsection{Final RHP formulation with time parameter}  
Up to now, we consider the case of $t=0$. Here we shall give the time evolution of $b/a$ under the time evolution of $q(x,t)$ to set up the RHP including the time parameter $t$. 

The existence of local unique solution $q\in X_{\infty}$ for $t\in [0,T)$ for some $T>0$ can be shown by the most rudimentary way. We write the WKI equation \eqref{WKI} as
$$q=W_t(q), \quad W_t(q):=-iq_0-i\int_0^t\left(\frac{q}{\Jq}\right)_{xx}d\tau.$$
We see that $W_t: X_{m+2}\rightarrow X_m$ with $m \geq 0$ and the fixed-point argument can be closed in the space $X_{\infty}$ for a finite time interval $[0,T)$ with some $T>0$. 

For every fixed $t\in [0,T)$, considering the Lax system \eqref{WKI-spectral} with the potential $q(\cdot,t) \in X_{\infty}$ of the WKI equation, we deduce that the exactly same results in Section \ref{WKI-AKNS} hold for the fundamental solutions $\psi_{\pm}(x,t;\lambda)$. On the other hand, considering the other Lax system \eqref{WKI-spectral-t} with $q(x,t)$ and taking $|x|\rightarrow \infty$, we obtain $\psi_t=-2i\lambda^2\sigma_3\psi$. This implies that the fundamental solutions for the both Lax systems must take form of $\psi_{\pm}(x,t;\lambda)e^{-2i\lambda^2t\sigma_3}$.

Following the exactly same definition of the scattering coefficients $a$ and $b$ with $\psi_{\pm}(x,t,;\lambda)$ in \eqref{T-matrix-a-b}, we find 
$$a(\lambda,t) =\det((\psi_+)_1,(\psi_-)_2)=\det(e^{-2i\lambda^2t}(\psi_+)_1,e^{2i\lambda^2t}(\psi_-)_2)=a(\lambda),$$
$$b(\lambda,t) =\det((\psi_-)_1,(\psi_+)_1)=e^{4i\lambda^2t}\det(e^{-2i\lambda^2t}(\psi_-)_1,e^{-2i\lambda^2t}(\psi_+)_1)=e^{4i\lambda^2t}b(\lambda).$$
The last equalities are due to the fact that the traces of the Lax operators are zeros and that $\psi_{\pm}(x,t;\lambda)e^{-2i\lambda^2t\sigma_3}$ are the fundamental solutions. We obtain 
$$\frac{b(\lambda,t)}{a(\lambda,t)}=\frac{b(\lambda)}{a(\lambda)}e^{4i\lambda^2t},$$
and for the reflection coefficient $r$ defined in \eqref{r-def}, we write
$$r(z,t)=r(z)e^{4i\frac{t}{z^2}},$$
which corresponds to the time evolution $q_0\mapsto q(x,t)$ of the WKI equation \eqref{WKI}. 
The jump condition \eqref{jump-RHP-2} for $t\in [0,T)$ is finally formulated as 
\begin{equation} \label{jump-RHP-3}
m_+= m_- \begin{pmatrix} 1+|r|^2 & \bar{r}e^{-\frac{2i}{z}x_{\HH}-\frac{4i}{z^2}t} \\
r e^{\frac{2i}{z}x_{\HH}+\frac{4i}{z^2}t} & 1 \end{pmatrix} \quad z\in \RR.
\end{equation}
In the inverse problem, we will use the above formulation. 
The scheme of the inverse problem works as follows. 
\begin{itemize}
\item show $|\partial_{x_{\HH}}m_{12}^{(1)}|<1$ (Lemma \ref{smallness-m})
\item recover $q(x_{\HH},t)$ through the reconstruction formula \eqref{reconstruct-potential}
\item estimate $q_{\HH}$ in $X_{m}$ space for some $m\geq 1$ for every $t\geq 0$ (Lemma \ref{qH-construct})
\item recover the variable $x$ (Lemma \ref{x-construct})
\item estimate $q(\cdot,t)\in X_m$ in the $x$ variable for every $t\geq 0$. (Theorem \ref{construct-q})
\end{itemize}
We will conclude that the the maximal existence time $T$ for a local solution $q(\cdot,t)\in X_{\infty}$ for $t\in [0,T)$ can be extended to an arbitrary large number. 
\section{The inverse problem} \label{RHP-section}
\subsection{Preliminaries and notations}
The sectionally analytic matrix function $m(z)$ in \eqref{normalized-m} is viewed as solution of the normalized RHP. In the inverse problem, we are going backwards, i.e., we start with a given data $r(z)$, obtain $m(z)$, and construct $q(z)$. We first define the normalized Riemann-Hilbert problem and introduce notations and concepts used in the following subsections. 
 
 Let a general contour $\Sigma \subset \CC$ be a finite union of simple smooth curves that can be either closed on the $\CC$ plane or extended to be closed on the Riemann sphere. The complex plane $\CC$ is divided into finitely many components and a union of their boundaries coincide with $\Sigma$. We say that a component is positively (negatively) oriented when its boundary is oriented positively (negatively). A positive orientation is a clockwise direction. We say that for a given contour $\Sigma$ and the $2\times 2$ matrix $v, v^{-1} \in L^{\infty}$, the $2\times 2$ matrix $m(z)$ solves the normalized RHP $(\Sigma, v)$ if 
\begin{itemize}
\item $m(z)$ is analytic in $\CC\setminus \Sigma$
\item $m_+=m_-v \quad z\in \Sigma$
\item $m\rightarrow I$ as $|z| \rightarrow \infty$ 
\end{itemize}
where $m_{\pm}(z)$ are non-tangential limits of $m(z)$ as $z$ approaches to $\Sigma$ from $\pm$ oriented components.


 In order to approach the normalized RHP above, we define projection operators $C^{\pm}_{\Sigma}$ that are the non-tangential limits of the Cauchy operator $C_{\Sigma}$,
 $$C_{\Sigma}\phi=\frac{1}{2\pi i}\int_{\Sigma}\frac{\phi}{s-z}ds$$
as $z$ approaches $\Sigma$ from $\pm$ oriented components in $\CC$. To summerize the notations, we use the subscript $\Sigma$ for the operator $C_{\Sigma}$ as the integral integrates over $\Sigma$, and the superscript $\pm$ to denote the projection operators $C_{\Sigma}^{\pm}$ accordingly. 

As operators $C_{\Sigma}^{\pm}$ are bounded on $L^p$ $(1<p<\infty)$,  we say, more precisely, that $m_{\pm}$ solves the normalized RHP above in $L^p$ sense if $m_{\pm}-I \in L^p$ and $m_+=m_-v$ for $z \in \Sigma$. Indeed, if so, $m_{\pm}$ can be written as $m_{\pm}-I=C^{\pm}_{\Sigma}m_-v$, which satisfy $m_+=m_-v$ by the property that $C^+_{\Sigma}-C^-_{\Sigma}=I$. Its analytic extension is given as $m(z)=I+C_{\Sigma}m_-v$ that satisfies the conditions of the normalized RHP above.

We can solve the normalized RHP in $L^p$ sense by addressing the integral equation $m_{\pm}-I=C^{\pm}_{\Sigma}m_-v$. This integral equation can take alternative forms by exploiting factorizations of $v$, which are useful for various estimates. When a matrix $v$ admits a factorization $v=v_-^{-1}v_+$ with $v_{\pm}, v_{\pm}^{-1} \in L^{\infty}$, we denote $w_{+}=-I+v_+$ and $w_-=I-v_-$, and a pair $w=(w_+,w_-)$ .  
We use the $\theta$ subscript to denote $v_{\theta\pm}:= e^{-i\theta  \sigma_3}v_{\pm}e^{i\theta \sigma_3}, \quad w_{\theta\pm}:= e^{-i\theta  \sigma_3}w_{\pm}e^{i\theta \sigma_3}$ where $\theta$ is a real function on $z$ and will be chosen later for the WKI case.
 Introduce the operator $C_{w_{\theta},\Sigma}$ given by
 \begin{equation} \label{singular-integral-operator}
C_{w_{\theta},\Sigma} \varphi:=C^+_{\Sigma}(\varphi w_{\theta-})+C^-_{\Sigma}(\varphi w_{\theta+}).
\end{equation}
We will address the integral equation
\begin{equation} \label{mu-integral-sigma} 
\mu = I + C_{w_{\theta},\Sigma}\mu.
\end{equation}
The subscript $w_{\theta}$ for the operator $C_{w_{\theta},\Sigma}$ corresponds to $w_{\theta}=(w_{\theta+},w_{\theta-})$ which appears in definition of $C_{w_{\theta},\Sigma}$. 

If the operator $I-C_{w_{\theta},\Sigma}$ is a bijection on $L^p$, $1<p<\infty$, then we have a unique solution $\mu-I \in L^p$ of \eqref{mu-integral-sigma}. Defining $m_{\pm}:= \mu v_{\theta \pm}$, we can easily verify that $m_+=m_-e^{-i \theta \sigma_3}ve^{i \theta \sigma_3}$ for $z\in \Sigma$. In the following subsection, we will consider the specific examples of $\Sigma$ and $v$ to address the normalized RHP for the WKI spectral problem. We will discuss solvability and obtain various estimates from the integral form \eqref{mu-integral-sigma}.
\subsection{The normalized RHP for the WKI problem}
 We are interested in the normalized RHP $(\RR, e^{-i \theta  \sigma_3}ve^{i\theta  \sigma_3})$, 
where
$$v=\begin{pmatrix} 1+|r|^2 & \bar{r}e^{-2 i \theta} \\
r e^{2i \theta} & 1 \end{pmatrix},\quad \theta =  \frac{x_{\HH}}{z}+2\frac{t}{z^2} \quad z \in \RR,$$
and $r(z)$ satisfies 
$$\lambda^j r(1/\lambda) \in H^1_{\lambda}$$
for $-1\leq j \leq m$ for some $m \in \mathbb{N}$. The above case coincide with \eqref{jump-RHP-3} and Theorem \ref{b-a-regularity}. To avoid confusion, we must emphasize that the regularity of the reflection coefficient $r$ is given in terms of $\lambda$ but not $z=-\frac{1}{\lambda}$. To remind us, we will keep using notation $H_{\lambda}^1$ to denote for the $H^1$ space with respect to $\lambda$ variable. 
 In previous Sections, we have shown that for a given potential $q$, we have the sectionally analytic matrix function $m(z)$ given in \eqref{normalized-m} that is solution to the normalized RHP $(\RR,e^{-i \theta  \sigma_3}ve^{i\theta  \sigma_3})$ above. We have shown that the reflection coefficient $r$ is estimated in norms in terms of norms of $q$. For this section, on the other hand, we want to estimate $q$ in terms of $r$ to obtain a global estimate on $q$ in time.  

The matrix $v$ can be factorized as $v=v_{-}^{-1}v_+$ with
\begin{equation} \label{v-triangle}
v_-=\begin{pmatrix} 1 & -\bar{r} \\ 0 & 1 \end{pmatrix}, \quad  v_+=\begin{pmatrix} 1 & 0 \\ r & 1 \end{pmatrix}.
\end{equation}
The above triangulations are used in estimates for $x\in \RR_-$. For the other half-line $\RR_+$, we need to consider different triangulations. To shorten our presentation, we give only the sketch at the end after Lemma \ref{qH-construct}. 
We address the integral equation
\begin{equation}\label{mu-integral-RHP}
\mu = I + C_{w_{\theta},\RR}\mu,
\end{equation}
where $ C_{w_{\theta},\RR}$ is defined in \eqref{singular-integral-operator} with $v_{\pm}$ in \eqref{v-triangle}.

Since $v=v_-^{-1}v_+$ is hermitian with positive eigevalues and $v,v^{-1} \in L^{\infty}$, the solvability of \eqref{mu-integral-RHP} is well-known. We summarize the argument in \cite{Zhou-1989}. To show that $I-C_{w_{\theta},\RR}$ is Fredholm, one notices that there is another bounded operator $I - C_{\tilde{w}_{\theta},\RR}$ on $L^2$ with $\tilde{w}=(-w_+, -w_-)$ such that $(I-C_{\tilde{w}_{\theta},\Sigma})(1-C_{w_{\theta},\Sigma})=(I-C_{w_{\theta},\Sigma})(1-C_{\tilde{w}_{\theta},\Sigma})=I-T$ where $T$ is a compact operator on $L^2$. This implies that $I - C_{w_{\theta},\RR}$ is Fredholm on $L^2$. To show that the Fredholm index is zero, since the Fredholm index is invariant under continuous deformations of Fredholm operators, let $\epsilon w=(\epsilon w_+, \epsilon w_-)$ for $\epsilon \in [0,1]$ and find that $\mbox{Ind}(I-C_{\epsilon w_{\theta},\Sigma})=\mbox{Ind}(I)=0$. Therefore, $I - C_{w_{\theta},\RR}$ is a Fredholm operator of the index zero. Lastly, the zero dimensional kernel of $I - C_{w_{\theta},\RR}$ is shown by the vanishing lemma (Theorem 9.3 in \cite{Zhou-1989}). The Fredholm alternative can be applied to deduce a unique solution $\mu-I \in L^2$ in \eqref{mu-integral-RHP} since $C_{w_{\theta},\RR}I \in L^2$.

The estimate on the inverse of $I - C_{w_{\theta},\RR}$ relies again on the fact that the matrix $v$ is hermitian and has strictly positive eigenvalues. The estimate works for all possible factorizations $v=\tilde{v}_-^{-1}\tilde{v}_+$ with $\tilde{v}_{\pm}, \tilde{v}_{\pm}^{-1} \in L^{\infty}$.
\begin{proposition} \label{inverse}
Let $\|w_{\pm}\|_{L^{\infty}} \leq \eta$ for some positive finite constant $\eta$. Then, 
$$\|(I-C_{w_{\theta},\RR})^{-1}\|_{L^2(\RR)} \leq k$$
where a constant $k>0$ depends only on $\eta$. 
\end{proposition}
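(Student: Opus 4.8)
The invertibility of $I-C_{w_\theta,\RR}$ on $L^2(\RR)$ is already in hand from the Fredholm-index-zero argument together with the vanishing lemma, so the content of the proposition is the quantitative, $\theta$-uniform bound $k(\eta)$ on the resolvent. The plan is to extract this bound from an energy identity that uses the fact that $v$ is Hermitian and positive definite. First I would record the ellipticity of $v$: since $w_+=\left(\begin{smallmatrix}0&0\\ r&0\end{smallmatrix}\right)$ and $w_-=\left(\begin{smallmatrix}0&\bar r\\ 0&0\end{smallmatrix}\right)$, the hypothesis $\|w_\pm\|_{L^\infty}\le\eta$ is exactly $\|r\|_{L^\infty}\le\eta$; as $\det v=1$ and $\operatorname{tr}v=2+|r|^2$, the eigenvalues of $v$ lie in an interval $[c(\eta),C(\eta)]$ with $0<c(\eta)\le C(\eta)<\infty$ depending only on $\eta$. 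Because $\theta$ is real, $e^{i\theta\sigma_3}$ is unitary, so $v_\theta=e^{-i\theta\sigma_3}ve^{i\theta\sigma_3}$ is unitarily conjugate to $v$ and inherits the same spectral bounds; likewise $\|v_{\theta\pm}\|_{L^\infty}$ and $\|v_{\theta\pm}^{-1}\|_{L^\infty}$ are controlled by $\eta$ alone. This is what will make the final constant independent of $\theta$, hence of $x_\HH$ and $t$.

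Next I would set up the Beals--Coifman reformulation. Given $f\in L^2(\RR)$, put $\mu=(I-C_{w_\theta,\RR})^{-1}f$; then $\mu-f=C_{w_\theta,\RR}\mu\in L^2$, so $\mu\in L^2$ and $\mu(w_{\theta+}+w_{\theta-})\in L^2$. I would then introduce the boundary functions
\[
m_\pm:=C^\pm_\RR\big(\mu(w_{\theta+}+w_{\theta-})\big)\in C^\pm_\RR L^2 .
\]
Using $C^+_\RR-C^-_\RR=I$ and the definition of $C_{w_\theta,\RR}$, a short computation produces the identities $\mu v_{\theta+}=f+m_+$ and $\mu v_{\theta-}=f+m_-$; together with $v_\theta=v_{\theta-}^{-1}v_{\theta+}$ these give the jump relation $f+m_+=(f+m_-)v_\theta$ on $\RR$.

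The heart of the argument is then a coercivity estimate. Writing $\Phi=f+m_-$ and pairing in $L^2(\RR)$,
\[
\langle \Phi v_\theta,\,\Phi\rangle=\langle f+m_+,\,f+m_-\rangle=\|f\|^2+\langle m_+,f\rangle+\langle f,m_-\rangle+\langle m_+,m_-\rangle .
\]
The crucial cancellation is $\langle m_+,m_-\rangle_{L^2}=0$: since $m_+\in C^+_\RR L^2$ and $m_-\in C^-_\RR L^2$ have Fourier transforms supported in $\{\xi\ge0\}$ and $\{\xi\le0\}$ respectively, Plancherel forces the (entrywise) inner product to vanish, equivalently $(C^+_\RR)^*C^-_\RR=0$. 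On the other hand, writing $\langle\Phi v_\theta,\Phi\rangle=\int\operatorname{tr}(v_\theta\Phi^*\Phi)$ and using $v_\theta\ge c(\eta)I$ together with $\Phi^*\Phi\ge0$ gives $\langle\Phi v_\theta,\Phi\rangle\ge c(\eta)\|\Phi\|^2$. Bounding the cross terms via $\|m_+\|\le\|v_\theta\|_{L^\infty}\|\Phi\|+\|f\|$ and $\|m_-\|\le\|\Phi\|+\|f\|$ turns the displayed identity into a quadratic inequality $c(\eta)u^2\le C(\eta)\big(\|f\|\,u+\|f\|^2\big)$ for $u:=\|\Phi\|$, whence $u\le k_0(\eta)\|f\|$. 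Finally $\mu=\Phi\,v_{\theta-}^{-1}$ with $\|v_{\theta-}^{-1}\|_{L^\infty}\le C(\eta)$ yields $\|\mu\|_{L^2}\le k(\eta)\|f\|_{L^2}$, which is the assertion.

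I expect the main obstacle to be the rigorous justification of the two structural facts carrying the estimate: the orthogonality $\langle m_+,m_-\rangle=0$, which requires $m_\pm$ to be genuine $L^2$ boundary values of functions analytic in $\CC^\pm$ (guaranteed here by $\mu(w_{\theta+}+w_{\theta-})\in L^2$ and the $L^2$ mapping properties of $C^\pm_\RR$), and the uniform coercivity $v_\theta\ge c(\eta)I$ with constant depending on $\eta$ only. Both rest squarely on the positivity of $v$: without it the cross term need not cancel and the quadratic inequality degenerates, which is exactly why this resolvent bound is special to the Hermitian positive-definite jump that arises here.
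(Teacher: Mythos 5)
Your argument is correct, and it is precisely the standard coercivity proof that the paper invokes without writing out: the text merely remarks that the bound ``relies on the fact that the matrix $v$ is hermitian and has strictly positive eigenvalues'' and defers to \cite{Zhou-1989}, whereas you supply the actual mechanism (the identities $\mu v_{\theta\pm}=f+m_{\pm}$, the Hardy-space orthogonality $\langle m_+,m_-\rangle=0$, and the uniform ellipticity $c(\eta)I\le v_\theta\le C(\eta)I$ coming from $\det v=1$ and $\operatorname{tr}v=2+|r|^2$). Since the paper's ``proof'' is a citation and your write-up is the argument behind that citation, there is nothing to correct; the only cosmetic remark is that the last step ($v_{\theta-}^{-1}=I+w_{\theta-}$) uses the triangular factorization, while the general statement for arbitrary factorizations needs the assumed bound on $\|v_{\pm}^{-1}\|_{L^\infty}$ instead.
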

The above bound is independent of $x_{\HH}$ and $t$. As discussed before, using solution $\mu$ in \eqref{mu-integral-RHP}, we write $m_{\pm}:=\mu v_{x\pm}$. From \eqref{mu-integral-RHP} and the fact that $C_{\RR}^+-C_{\RR}^-=I$, we realize that $m_{\pm}$ satisfy
$$m_+=m_- v_{\theta} \quad z \in \RR.$$
An analytic extension of $m_{\pm}$ is given as 
\begin{equation}\label{m-analytic-RHP}
m(z) = I + C_{\RR}\mu(w_{\theta+}+w_{\theta-}) \quad z\in \CC\setminus \RR.
\end{equation}

The above discussion implies that, defining $\mu$ as $\mu=m_{\pm}v_{\theta\pm}^{-1}$ where $m_{\pm}$ correspond with the non-tangential limits of the sectionally analytic matrix function $m(z;x_{\HH},t)$ in \eqref{normalized-m}, we obtain the expression of $m$ in terms of $\mu$ as given in \eqref{m-analytic-RHP}. Following the same notation in Proposition \ref{m-analyticity-reconstruct}, we recall
$$m^{(1)}= \lim_{z\rightarrow \infty}z \begin{pmatrix} 0 & (m)_{12} \\ (m)_{21} & 0 \end{pmatrix},$$
so we obtain, from \eqref{m-analytic-RHP}, 
\begin{equation} \label{Reconstruct-formula-inverse} 
\partial_{x_{\HH}}m^{(1)}=\frac{-\sigma_3}{2\pi i} \partial_{x_{\HH}}\left[\frac{\sigma_3}{2} ,\; \int_{\RR}\mu(w_{\theta-}+w_{\theta+})ds\right],
\end{equation}
where $[\cdot,\cdot]$ is a Lie bracket, and $\frac{1}{2}\sigma_3[\sigma_3,A]$ is just the off-diagonal part of $A$.
The (1,2) th element of the above formula is an alternative form of \eqref{reconstruct-potential}.  From this, we will estimate $q_{\HH}$ in terms of the solution $\mu$. 
We will obtain estimates of $\mu$ from the integral equation \eqref{mu-integral-RHP}, i.e., 
\begin{equation} \label{mu-integral-2}
\mu-I=(I-C_{w_{\theta},\RR})^{-1}C_{w_{\theta},\RR}I
\end{equation}
and by differentiating \eqref{mu-integral-RHP} in $x$ after $n$ times, we also have 
\begin{equation} \label{mu-x-integral}
\partial_{x_{\HH}}^n\mu=(I-C_{w_{\theta},\RR})^{-1}\left(n\sum_{j=1}^{n-1}C_{\partial_{x_{\HH}}^jw_{\theta},\RR}\partial_{x_{\HH}}^{n-j}\mu+ C_{\partial_{x_{\HH}}^nw_{\theta},\RR}\mu\right).
\end{equation}
From \eqref{mu-x-integral}, we will use the boostrap argument once we obtain the estimate of $\mu$.

We shall first estimate $C_{w_{\theta},\RR}I$ in \eqref{mu-integral-2}. The following estimate is for the case of $t=0$. Again, we remind that regularity of $r$ is given in $\lambda$ variable. 

In the following, $A\lesssim B$ means that there exists a constant $M>0$ independent of $x_{\HH}$ and $t$ such that $A\leq M B$.
\begin{lemma} \label{estimate-C}
If $\lambda^{j} r(\lambda^{-1}) \in H^1_{\lambda}$ for $-1 \leq j \leq m$ with $m\geq -1$, then
$$\|\partial_{x_{\HH}}^{k} C_{w_{x_{\HH}},\RR}I \|_{L^2} \lesssim (1+x_{\HH}^2)^{-1/2} \quad x_{\HH}\leq  0$$
for all $0\leq k \leq m+1$.
\end{lemma}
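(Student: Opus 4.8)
The plan is to unwind the definition of $C_{w_\theta,\RR}I$ explicitly.  Since $w_{\theta+}=-I+v_{\theta+}$ and $w_{\theta-}=I-v_{\theta-}$ with the triangular factors \eqref{v-triangle}, we have $w_{\theta+}=\begin{pmatrix}0&0\\ re^{2i\theta}&0\end{pmatrix}$ and $w_{\theta-}=\begin{pmatrix}0&\bar r e^{-2i\theta}\\0&0\end{pmatrix}$.  Applying $C^\pm_\RR$ and using that the Cauchy projections of an off-diagonal $L^2$-function stay off-diagonal, the quantity $C_{w_\theta,\RR}I=C^+_\RR w_{\theta-}+C^-_\RR w_{\theta+}$ reduces, entry by entry, to Cauchy transforms of $r e^{2i\theta}$ and $\bar r e^{-2i\theta}$ along $\RR$.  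Since $\theta=x_\HH/z$ at $t=0$, each entry is (up to the bounded operators $C^\pm_\RR$) a function of the form $r(z)e^{\pm 2i x_\HH /z}$.  So everything reduces to controlling, in $L^2_z$, the oscillatory factor $e^{2ix_\HH/z}$ multiplying $r$ and its $x_\HH$-derivatives.

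\textbf{Reduction to an oscillatory integral in $\lambda$.}
The key is that $x_\HH\le 0$ makes the phase $e^{2ix_\HH/z}=e^{-2i\lambda x_\HH}$ a \emph{genuine Fourier mode in $\lambda$} after the substitution $z=-1/\lambda$, and differentiation in $x_\HH$ brings down factors of $\lambda$.  Concretely I would change variables back to $\lambda$, where the hypothesis $\lambda^j r(\lambda^{-1})\in H^1_\lambda$ for $-1\le j\le m$ lives.  A $k$-fold $\partial_{x_\HH}$ produces the weight $\lambda^k$, so $\partial_{x_\HH}^k C_{w_\theta,\RR}I$ is, schematically, a Cauchy projection of $\lambda^k r(\lambda^{-1})e^{-2i\lambda x_\HH}$; because $0\le k\le m+1$ and the lowest admissible index is $-1$, this stays within the class $\lambda^j r(\lambda^{-1})\in H^1_\lambda$.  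Using the $L^2$-boundedness of $C^\pm_\RR$, the $L^2_z$ norm is bounded by the $L^2_\lambda$ norm of $\lambda^k r(\lambda^{-1})e^{-2i\lambda x_\HH}$ together with a boundary-term contribution from integrating by parts (this is where the extra $H^1$-regularity is needed).

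\textbf{Extracting the decay $(1+x_\HH^2)^{-1/2}$.}
To get the weight $(1+x_\HH^2)^{-1/2}$, rather than a mere uniform bound, I would integrate by parts once in $\lambda$, writing $e^{-2i\lambda x_\HH}=\tfrac{1}{-2ix_\HH}\partial_\lambda e^{-2i\lambda x_\HH}$ so that one factor of $x_\HH^{-1}$ is produced and the derivative falls on $\lambda^k r(\lambda^{-1})\in H^1_\lambda$, which is in $L^2_\lambda$.  Combining the trivial bound $\|\,\cdot\,\|_{L^2}\lesssim 1$ (valid for all $x_\HH$) with the bound $\|\,\cdot\,\|_{L^2}\lesssim |x_\HH|^{-1}$ (valid for $|x_\HH|\ge 1$) yields $\lesssim (1+x_\HH^2)^{-1/2}$.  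The boundary terms from this integration by parts must be handled; since $x_\HH\le 0$ and the relevant one-sided Cauchy transforms close off the correct half-plane, I expect these to vanish or to be absorbed, but checking that the sign condition $x_\HH\le 0$ really kills them is the delicate point.

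\textbf{Main obstacle.}
The hard part is the bookkeeping of the Cauchy projections under the $z\mapsto\lambda$ change of variable, since $C^\pm_\RR$ does \emph{not} commute with multiplication by the oscillatory factor, and one must verify that the projection of a product $\lambda^k r(\lambda^{-1})e^{-2i\lambda x_\HH}$ can still be estimated by the $H^1_\lambda$ norm of $\lambda^k r(\lambda^{-1})$ uniformly in $x_\HH$.  I expect to control this by exploiting the commutator structure of $C^\pm_\RR$ with the oscillation (equivalently, the Fourier-multiplier description of $C^\pm_\RR$ as projection onto a half-line), so that conjugating by $e^{-2i\lambda x_\HH}$ only translates the Fourier support and leaves the $L^2$ operator norm of $C^\pm_\RR$ unchanged; this is precisely the mechanism that keeps the constant $M$ independent of $x_\HH$ and $t$.
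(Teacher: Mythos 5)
Your reduction is the right one and matches the paper's: $C_{w_\theta,\RR}I$ has only the two off-diagonal entries $C^{\mp}_{\RR}\bigl(r^{(\pm)}e^{\pm 2i\theta}\bigr)$, the substitution $z\mapsto \lambda=1/s$ turns each into a Cauchy-type integral of $\lambda^{-1}r(1/\lambda)e^{2i\lambda x_{\HH}}$, and each $\partial_{x_{\HH}}$ costs one power of $\lambda$, so the indices $0\le k\le m+1$ land exactly on the hypothesis $\lambda^{j}r(1/\lambda)\in H^1_\lambda$, $-1\le j\le m$. The gap is in how you extract the decay $\langle x_{\HH}\rangle^{-1}$. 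Integrating by parts in $\lambda$ against $e^{-2i\lambda x_{\HH}}$ does not close: the derivative falls not only on $\lambda^{j}r(1/\lambda)$ (harmless, by $H^1_\lambda$) but also on the Cauchy kernel, producing $\int h(\lambda)\,\frac{\zeta}{(1-\lambda\zeta)^2}\,e^{-2i\lambda x_{\HH}}\,d\lambda$ with $\zeta=z-i0$. This squared-kernel operator is not controlled by the $L^2$-boundedness of $C^{\pm}_{\RR}$; the only way to tame it is to integrate by parts back, which reinstates the factor $x_{\HH}$ you just removed. So the step ``$\lesssim |x_{\HH}|^{-1}$ for $|x_{\HH}|\ge 1$'' is not established by the argument as written.

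Ironically, your closing paragraph contains the correct mechanism but deploys it for the wrong purpose. The paper's proof performs the contour integration explicitly (equivalently: uses that $C^{-}_{\RR}$ is, on the Fourier side in the variable dual to $\lambda$, restriction to a half-line) and observes that multiplication by $e^{2i\lambda x_{\HH}}$ translates the Fourier transform of $\lambda^{j}r(1/\lambda)$ by $2x_{\HH}$. For $x_{\HH}\le 0$ the surviving Fourier support is $\xi\ge -2x_{\HH}=2|x_{\HH}|$, a half-line receding to $+\infty$; by Plancherel,
\begin{equation*}
\|(C^{-}_{\RR}w_{\theta+})_{21}\|_{L^2_z}^2 \;\simeq\; \int_{-2x_{\HH}}^{\infty}\bigl|\mathcal{F}\bigl(\lambda^{-1}r(\lambda^{-1})\bigr)(\xi)\bigr|^2\,d\xi
\;\le\;\langle x_{\HH}\rangle^{-2}\int_{\RR}\langle\xi\rangle^2\bigl|\mathcal{F}\bigl(\lambda^{-1}r(\lambda^{-1})\bigr)(\xi)\bigr|^2\,d\xi,
\end{equation*}
and the $H^1_\lambda$ hypothesis is exactly the statement that the last integral is finite. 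Thus the decay comes from the Fourier-support localization (this is also where the sign condition $x_{\HH}\le 0$ enters, not through boundary terms), while the uniformity in $x_{\HH}$ is automatic from Plancherel; you have assigned the localization the job of providing uniform constants and asked an integration by parts to provide the decay, which it cannot do here. Replacing your third step by this Plancherel/half-line computation, and tracking the Jacobian $dz=d\lambda/\lambda^2$ that accounts for the prefactor $1/z$, turns your outline into the paper's proof.
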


\begin{proof}
We shall show the estimate of $C^-_{\RR}w_{x_{\HH}+}$ since that of $C^+_{\RR}w_{x_{\HH}-}$ is done in the same way. We shall consider the following
\begin{align*}
(C^-_{\RR}w_{x_{\HH}+})_{21} &=\frac{1}{2\pi i} \lim_{\epsilon \rightarrow 0}\int_{\RR}\frac{r(s) e^{2i\frac{x_{\HH}}{s}}}{s-(z-i\epsilon)}ds \\
& = \frac{1}{2\pi i} \lim_{\epsilon \rightarrow 0}\int_{\RR}\frac{\frac{1}{\lambda}r(\frac{1}{\lambda}) e^{2i\lambda x_{\HH}}}{1-\lambda(z-i\epsilon)}d\lambda \\
& = \frac{1}{i2(\pi)^{3/2} } \lim_{\epsilon \rightarrow 0} \int_{\RR}\frac{e^{i\lambda(\xi+2x_{\HH})}}{1-\lambda(z-i\epsilon)}d\lambda \mathcal{F}(\lambda^{-1}r(\lambda^{-1}))(\xi)d\xi  \\
& = \frac{1}{\sqrt{\pi}}\frac{1}{z}\int_{-2x_{\HH}}^{\infty} e^{i\frac{1}{z}(\xi+2x_{\HH})}  \mathcal{F}(\lambda^{-1}r(\lambda^{-1}))(\xi)d\xi.  
\end{align*}
The second equality is the change of variable, and the third equality is the Fourier inverse of the Fourier transform of $f$, i.e., $\mathcal{F}(f)=\frac{1}{\sqrt{\pi}}\int_{\RR} f(\lambda)e^{-i\lambda \xi}d\lambda$ and $f=\frac{1}{\sqrt{\pi}}\int_{\RR} \mathcal{F}(f)(\xi)e^{i\lambda\xi}d\xi$, and the fourth equality is by the complex contour integration.  
We find that 
\begin{align} \label{C-estimate-main}
&\| (C^-_{\RR}w_{x_{\HH}+})_{21} \|_{L^2}^2  \\
&= \frac{1}{\pi} \int_{\RR} \left( \int_{-2x_{\HH}}^{\infty} \int_{-2x_{\HH}}^{\infty} e^{-i\frac{1}{z}(\xi-\xi')} \mathcal{F}(\lambda^{-1}r(\lambda^{-1}))(\xi) \overline{\mathcal{F}(\lambda^{-1}r(\lambda^{-1}))(\xi')} d\xi d\xi'\right) \frac{dz}{z^2} \nonumber \\
&=\frac{1}{\pi} \int_{\RR} \left( \int_{-2x_{\HH}}^{\infty} \int_{-2x_{\HH}}^{\infty} e^{-iz(\xi-\xi')} \mathcal{F}(\lambda^{-1}r(\lambda^{-1}))(\xi) \overline{\mathcal{F}(\lambda^{-1}r(\lambda^{-1}))(\xi')} d\xi d\xi' \right)dz \nonumber\\
&=2\int_{-2x_{\HH}}^{\infty} |\mathcal{F}(\lambda^{-1}r(\lambda^{-1}))(\xi)|^2 d\xi. \nonumber
\end{align} 
The second equality is the change of variable and the third equality is the Plancherel's theorem.   
Since $\lambda^{-1}r(\lambda^{-1}) \in H_{\lambda}^{1}$, then for $x_{\HH}\leq 0$, we have 
\begin{equation}\label{key-ineq1}
\int_{-2x_{\HH}}^{\infty} |\mathcal{F}(\lambda^{-1}r(\lambda^{-1}))(\xi)|^2 d\xi \leq \int_{-2x_{\HH}}^{\infty}  \frac{\langle \xi \rangle^{2}}{\langle x_{\HH} \rangle^{2}} |\mathcal{F}(\lambda^{-1}r(\lambda^{-1}))(\xi)|^2 d\xi\lesssim \langle x_{\HH}\rangle^{-2}\|\lambda^{-1}r(\lambda^{-1})\|_{H^1_{\lambda}}^2
\end{equation}
Lastly, consider 
$$\partial_{x_{\HH}}^{m+1}(C^-_{\RR}w_{x+})_{21} =\frac{(2i)^m}{2\pi i} \lim_{\epsilon \rightarrow 0}\int_{\RR}\frac{s^{-m-1}r(s) e^{2i\frac{x_{\HH}}{s}}}{s-(z-i\epsilon)}ds=\frac{(-2i)^m}{2\pi i} \lim_{\epsilon \rightarrow 0}\int_{\RR}\frac{\lambda^{m}r(1/\lambda) e^{2i \lambda x_{\HH}}}{1-\lambda(z-i\epsilon)}d\lambda.$$
The rest follows in the exactly same way as done in \eqref{C-estimate-main} and \eqref{key-ineq1}, and we easily see that $\lambda^{m}r(1/\lambda) \in H^1_{\lambda}$ is needed for $k=m+1$.
\end{proof}

The previous Lemma for the case $\partial_x^{m+1}C_{w_{x_{\HH}},\RR}I$ requires $H^1_{\lambda}$ property of $\lambda^m r(1/\lambda)$. 
Now, when we consider the time evolution, that is, $\lambda^m r(1/\lambda)e^{4i\lambda^2t}$, we notice that $\lambda^m r(1/\lambda)e^{-4i\lambda^2t}$ is $H^1_{\lambda}$ function if $\lambda^m r(1/\lambda) \in H^1_{\lambda}$ and $\lambda^{m+1}r(1/\lambda) \in L^2_{\lambda}$. We deduce the following.
\begin{lemma} \label{estimate-C-time}
If $\lambda^{j} r(\lambda^{-1}) \in H^1_{\lambda}$ for $-1 \leq j \leq m$ and $\lambda^{m+1}r(\lambda^{-1})\in L^2_{\lambda}$ with $m\geq -1$, then
$$\|\partial_{x_{\HH}}^{k} C_{w_{\theta},\RR}I \|_{L^2} \leq K(t)(1+x_{\HH}^2)^{-1/2} \quad x_{\HH}\leq 0$$
for $0\leq k\leq m+1$,
where $K(t)$ grows at most polynomially in $t$. 
\end{lemma}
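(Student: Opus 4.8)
The plan is to re-run the computation in the proof of Lemma \ref{estimate-C} essentially verbatim, the only change being that the phase $e^{2i x_{\HH}/s}$ is replaced by $e^{2ix_{\HH}/s + 4it/s^2}$, since now $\theta = x_{\HH}/z + 2t/z^2$. First I would treat the off-diagonal entry $(C^-_{\RR}w_{\theta+})_{21}$; the entry $(C^+_{\RR}w_{\theta-})_{12}$ is handled identically, and since $w_{\theta\pm}$ have no diagonal part these two are all that must be estimated. After the change of variable $s = 1/\lambda$ and the Fourier/contour manipulations of \eqref{C-estimate-main}, the $k$-th derivative $\partial_{x_{\HH}}^k$ brings down a factor $(2i\lambda)^k$ from $e^{2i\lambda x_{\HH}}$ while leaving the time phase $e^{4i\lambda^2 t}$ untouched. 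Consequently the Plancherel step yields
$$\|\partial_{x_{\HH}}^k(C^-_{\RR}w_{\theta+})_{21}\|_{L^2}^2 \lesssim \int_{-2x_{\HH}}^{\infty}\left|\mathcal{F}\!\left(\lambda^{k-1}r(\lambda^{-1})e^{4i\lambda^2 t}\right)(\xi)\right|^2 d\xi,$$
exactly as in \eqref{C-estimate-main} but with the extra unimodular factor $e^{4i\lambda^2 t}$ inside the transform.

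Next I would invoke the key inequality \eqref{key-ineq1}: for $x_{\HH}\le 0$ the integration variable satisfies $\xi \ge -2x_{\HH} \ge 0$, hence $\langle x_{\HH}\rangle \le \langle \xi\rangle$ on the domain of integration, so the right-hand side is bounded by $\langle x_{\HH}\rangle^{-2}\|\lambda^{k-1}r(\lambda^{-1})e^{4i\lambda^2 t}\|_{H^1_{\lambda}}^2$. This isolates the desired decay $(1+x_{\HH}^2)^{-1/2}$ and reduces the whole lemma to controlling the single quantity $\|\lambda^{k-1}r(\lambda^{-1})e^{4i\lambda^2 t}\|_{H^1_\lambda}$, uniformly in $x_{\HH}$ and with at most polynomial growth in $t$.

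Finally I would estimate this $H^1_\lambda$ norm directly. The $L^2$ part is $t$-independent because $|e^{4i\lambda^2 t}|=1$, so it equals $\|\lambda^{k-1}r(\lambda^{-1})\|_{L^2}$, which is finite for $k-1\le m$. For the derivative part I use the product rule
$$\partial_\lambda\!\left(\lambda^{k-1}r(\lambda^{-1})e^{4i\lambda^2 t}\right)=\partial_\lambda\!\left(\lambda^{k-1}r(\lambda^{-1})\right)e^{4i\lambda^2 t}+8i t\,\lambda^{k}r(\lambda^{-1})e^{4i\lambda^2 t}.$$
The first summand has $L^2_\lambda$ norm equal to $\|\partial_\lambda(\lambda^{k-1}r(\lambda^{-1}))\|_{L^2}$, finite since $\lambda^{k-1}r(\lambda^{-1})\in H^1_\lambda$ for $k-1\le m$; the second has $L^2_\lambda$ norm $8t\,\|\lambda^k r(\lambda^{-1})\|_{L^2}$, finite for $k\le m+1$. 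The top case $k=m+1$ is precisely where the additional hypothesis $\lambda^{m+1}r(\lambda^{-1})\in L^2_\lambda$ is consumed. Collecting terms gives $\|\lambda^{k-1}r(\lambda^{-1})e^{4i\lambda^2 t}\|_{H^1_\lambda}\lesssim 1+t$, so one may take $K(t)=C(1+t)$, which grows linearly and hence at most polynomially in $t$.

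Since the remainder of the computation is a transcription of the proof of Lemma \ref{estimate-C}, the only genuinely new point—and the one place where care will be needed—is the differentiation of the quadratic time phase $e^{4i\lambda^2 t}$: it raises the power of $\lambda$ by one and extracts a factor of $t$, which is exactly what forces the extra order of $L^2_\lambda$ integrability in the hypothesis and produces the polynomial-in-$t$ constant. I expect no analytic obstacle beyond verifying this bookkeeping.
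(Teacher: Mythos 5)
Your proposal is correct and follows the same route as the paper: the paper's own justification of Lemma \ref{estimate-C-time} is precisely the remark preceding it, namely that $\lambda^{k-1}r(1/\lambda)e^{4i\lambda^2 t}$ remains in $H^1_\lambda$ (with norm growing linearly in $t$) because differentiating the time phase costs one extra power of $\lambda$ and a factor of $t$, after which the argument of Lemma \ref{estimate-C} applies verbatim. Your bookkeeping of where $\lambda^{m+1}r(\lambda^{-1})\in L^2_\lambda$ is consumed (the top case $k=m+1$) and the conclusion $K(t)\lesssim 1+t$ match the paper's intent exactly.
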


From the integral equations \eqref{mu-integral-2} and \eqref{mu-x-integral}, and the previous Lemma, we obtain the following.
\begin{lemma} \label{mu-estimate-lem}
If ${\lambda}^{j}r(\lambda^{-1})\in H^1_{\lambda}$ for $-1 \leq j \leq 0$, then 
\begin{equation} \label{mu-estimate}
\|\mu-I\|_{L^2} \lesssim \langle x_{\HH} \rangle^{-1}, \quad x_{\HH}\leq 0,
\end{equation}
and furthermore if ${\lambda}^{j}r(\lambda^{-1})\in H^1_{\lambda}$ for $-1 \leq j \leq m+1$ with $m\geq 0$, then 
\begin{equation}\label{mu-x-estimate}
\|\partial^{k}_{x_{\HH}}\mu\|_{L^2} < K(t) \langle x_{\HH} \rangle^{-1}, \quad x_{\HH}\leq 0
\end{equation}
for $1\leq k\leq m+1$, where $K(t)$ grows at most polynomially in $t$.
\end{lemma}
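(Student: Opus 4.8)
The plan is to derive both estimates directly from the two integral identities \eqref{mu-integral-2} and \eqref{mu-x-integral}, using the uniform bound on the inverse operator from Proposition \ref{inverse} together with the decay estimate for the forcing term supplied by Lemma \ref{estimate-C-time}. First I would establish \eqref{mu-estimate}. Since $\lambda^j r(\lambda^{-1}) \in H^1_\lambda$ for $-1 \leq j \leq 0$, the hypotheses of Lemma \ref{estimate-C-time} hold with $m = -1$, giving $\|C_{w_\theta,\RR} I\|_{L^2} \lesssim \langle x_\HH \rangle^{-1}$ for $x_\HH \leq 0$ (at $t=0$ one may instead invoke Lemma \ref{estimate-C}, but I would use the time-dependent version so the constant $K(t)$ is tracked uniformly). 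The $H^1_\lambda$ hypothesis on $r$ bounds $\|w_\pm\|_{L^\infty}$ by some finite $\eta$, so Proposition \ref{inverse} gives $\|(I-C_{w_\theta,\RR})^{-1}\|_{L^2 \to L^2} \leq k$ with $k$ depending only on $\eta$, hence independent of $x_\HH$ and $t$. Combining these in \eqref{mu-integral-2} yields $\|\mu - I\|_{L^2} \leq k \, \|C_{w_\theta,\RR} I\|_{L^2} \lesssim \langle x_\HH \rangle^{-1}$, which is \eqref{mu-estimate}.

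Next I would prove \eqref{mu-x-estimate} by an induction on $k$ driven by the recursion \eqref{mu-x-integral}, which I read as a bootstrap: the $L^2$-norm of $\partial_{x_\HH}^k \mu$ is controlled by the same bounded inverse applied to a forcing term built from the lower-order derivatives $\partial_{x_\HH}^{n-j}\mu$ for $j \geq 1$ together with the inhomogeneous piece $C_{\partial_{x_\HH}^k w_\theta, \RR}\mu$. For the base case $k=1$, the right-hand side of \eqref{mu-x-integral} involves $C_{\partial_{x_\HH} w_\theta, \RR}\mu$; writing $\mu = (\mu - I) + I$ splits this into a term handled by \eqref{mu-estimate} and a term of the form $C_{\partial_{x_\HH} w_\theta, \RR} I$, which Lemma \ref{estimate-C-time} bounds by $K(t)\langle x_\HH\rangle^{-1}$ since $\lambda^{m+1} r(\lambda^{-1}) \in L^2_\lambda$ now holds for the needed range. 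At each inductive step I would again substitute the previously established decay $\|\partial_{x_\HH}^{n-j}\mu\|_{L^2} \lesssim K(t)\langle x_\HH\rangle^{-1}$ for the lower orders, use the uniform operator-norm bound from Proposition \ref{inverse}, and absorb the inhomogeneous contribution via Lemma \ref{estimate-C-time}; the requirement $\lambda^j r(\lambda^{-1}) \in H^1_\lambda$ up to $j = m+1$ is exactly what is needed so that $\partial_{x_\HH}^{m+1} w_\theta$ produces an admissible forcing term.

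The main obstacle I anticipate is controlling the operator-valued coefficients $C_{\partial_{x_\HH}^j w_\theta, \RR}$ appearing in the forcing term of \eqref{mu-x-integral}, rather than just the scalar kernel $C_{w_\theta,\RR} I$ estimated in Lemma \ref{estimate-C-time}. Differentiating $w_\theta = e^{-i\theta\sigma_3} w e^{i\theta\sigma_3}$ in $x_\HH$ brings down factors from $\partial_{x_\HH}\theta = 1/z$, which is singular at $z=0$ and only integrable against the Fourier-side decay of $r$; one must verify that these derivative operators $C_{\partial_{x_\HH}^j w_\theta, \RR}$ are bounded on $L^2$ uniformly in $x_\HH$ and $t$, so that the composition with the uniformly bounded inverse $(I-C_{w_\theta,\RR})^{-1}$ does not destroy the $\langle x_\HH\rangle^{-1}$ decay. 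Here the precise regularity hypothesis $\lambda^j r(\lambda^{-1}) \in H^1_\lambda$ down to $j=-1$ is essential, since the $j=-1$ weight compensates exactly for the $1/z$ factors generated by differentiating the phase. I would handle this by the same Fourier-transform change of variable used in the proof of Lemma \ref{estimate-C}, which converts each differentiated operator into multiplication by a weight on the Fourier side and reduces the uniform boundedness to the stated $H^1_\lambda$ and $L^2_\lambda$ membership of the appropriate powers $\lambda^j r(\lambda^{-1})$.
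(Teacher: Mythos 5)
Your proposal follows essentially the same route as the paper: estimate $\|C_{w_{\theta},\RR}I\|_{L^2}\lesssim \langle x_{\HH}\rangle^{-1}$ via Lemma \ref{estimate-C-time}, compose with the uniform bound on $(I-C_{w_{\theta},\RR})^{-1}$ from Proposition \ref{inverse} to get \eqref{mu-estimate}, then induct on the order of the derivative through \eqref{mu-x-integral}, splitting $\mu=(\mu-I)+I$ at each step. One correction to your final paragraph, though: you have the roles of the hypotheses inverted. Under $z=-1/\lambda$ the factor $\partial_{x_{\HH}}\theta = 1/z$ corresponds to multiplication by $\lambda$, so it is the \emph{positive} powers $\lambda^{j}r(1/\lambda)\in H^1_{\lambda}$, $1\leq j\leq m+1$, that absorb the $1/z$ factors produced by differentiating the phase (the paper uses exactly $\|z^{-k}r(z)\|_{L^{\infty}_z}=\|\lambda^{k}r(1/\lambda)\|_{L^{\infty}_{\lambda}}\lesssim\|\lambda^{k}r(1/\lambda)\|_{H^1_{\lambda}}$ to bound $C_{\partial_{x_{\HH}}^{k}w_{\theta},\RR}(\mu-I)$, with no Fourier analysis needed for that piece). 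The $j=-1$ hypothesis does the opposite job: after the change of variable $s\mapsto 1/\lambda$ the kernel of the undifferentiated $C_{w_{\theta},\RR}I$ becomes $\lambda^{-1}r(1/\lambda)e^{2i\lambda x_{\HH}}$, and it is the $H^1_{\lambda}$ regularity of $\lambda^{-1}r(1/\lambda)$ that yields the $\langle x_{\HH}\rangle^{-1}$ spatial decay via the weight transfer in \eqref{key-ineq1}. With that bookkeeping fixed, your argument matches the paper's.
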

\begin{proof}
From \eqref{mu-integral-2} and Proposition \ref{inverse} for the inverse $(I-C_{w_{\theta},\RR})^{-1}$ and Lemma \ref{estimate-C} for $C_{w_{\theta},\RR}I$, if $\lambda^jr(1/\lambda)\in H^1_{\lambda}$ for $-1\leq j \leq 0$ we have 
\begin{equation} \label{mu-ineq1}
\|\mu-I\|_{L^2} \lesssim \|C_{w_{\theta},\RR}I\|_{L^2}\lesssim \langle x_{\HH}\rangle^{-1} \quad x_{\HH}\leq 0,
\end{equation}
which proves \eqref{mu-estimate}.

Similarly, from \eqref{mu-x-integral}, we have
\begin{align} \label{mu-ineq2}
\|\partial_{x_{\HH}}\mu\|_{L^2} &\lesssim \|C_{\partial_{x_{\HH}} w_{\theta},\RR}(\mu-I)-C_{\partial_{x_{\HH}} w_{\theta},\RR}I\|_{L^2} \nonumber\\
&\leq  \|C_{\partial_{x_{\HH}} w_{\theta},\RR}(\mu-I)\|_{L^2}+\|C_{\partial_{x_{\HH}} w_{\theta},\RR}I\|_{L^2}.
\end{align}
The second term of the right-hand side in \eqref{mu-ineq2} is estimated from Lemma \ref{estimate-C-time} if $r(1/\lambda)\in H^1_{\lambda}$. The first term above is estimated as
 $$\|C_{\partial_{x_{\HH}} w_{\theta},\RR}(\mu-I)\|_{L^2} \lesssim \|z^{-1}r(z)\|_{L^{\infty}_z}\|\mu-1\|_{L^2}, \quad x_{\HH}\leq 0.$$
If $\lambda r(1/\lambda) \in H^{1}\subset L^{\infty}$, then it implies that $z^{-1} r(z) \in L^{\infty}$. We showed \eqref{mu-x-estimate} in the case of $m=1$, as we need $\lambda^jr(1/\lambda)\in H^1_{\lambda}$ for $-1\leq j \leq 1$. 

Next, to estimate $\partial_{x_{\HH}}^2\mu$ in $L^2_z$ norm, from \eqref{mu-x-integral}, we need results of \eqref{mu-ineq1} and \eqref{mu-ineq2}. We also have additional terms $C_{\partial_{x_{\HH}}^2 w_{\theta},\RR}I$ and $z^{-2}r(z)\in L^{\infty}_z$. Following the exactly same argument above, we need $\lambda^jr(1/\lambda) \in H_{\lambda}^1$ for $-1\leq j \leq 2$ for \eqref{mu-x-estimate} with $m=2$.
The rest follows from the inductive argument.

\end{proof}

\subsection{Estimates on the reconstruction formula \eqref{Reconstruct-formula-inverse} }
From results above, we obtain the following result.
\begin{lemma} \label{estimate-formula}
If ${\lambda}^{j}r(\lambda^{-1})\in H^1_{\lambda}$ for $-1 \leq j \leq m+1$ with $m\geq 0$, then 
$$\left\|\langle x_{\HH}\rangle \partial_{x_{\HH}}^{k} \int_{\RR}\mu(w_{\theta-}+w_{\theta+})ds \right\|_{L_{x_{\HH}}^2(\RR^-)} \leq K(t)$$
for $1 \leq k \leq m+1$,
where $K(t)$ grows at most polynomially in $t$.
\end{lemma}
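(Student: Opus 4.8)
The plan is to estimate the reconstruction integral $N(x_{\HH},t):=\int_{\RR}\mu(w_{\theta-}+w_{\theta+})\,ds$ entry by entry. I first record that, with the triangulation \eqref{v-triangle}, $w_{\theta-}+w_{\theta+}=\begin{pmatrix}0 & \bar r\, e^{-2i\theta}\\ r\, e^{2i\theta} & 0\end{pmatrix}$ is off-diagonal with oscillation $e^{\pm 2i\theta}$, $\theta=\frac{x_{\HH}}{s}+\frac{2t}{s^2}$, so each $\partial_{x_{\HH}}$ produces a factor $\pm\frac{2i}{s}$. Writing $\mu=I+(\mu-I)$ splits $N=N_0+N_1$, where $N_0:=\int_{\RR}(w_{\theta-}+w_{\theta+})\,ds$ carries only the reflection coefficient and $N_1:=\int_{\RR}(\mu-I)(w_{\theta-}+w_{\theta+})\,ds$ is the correction. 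The two pieces require different mechanisms, and the weighted bound on $N_1$ will be the crux.

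For $N_0$ I would repeat the computation of Lemma \ref{estimate-C}. Its $(2,1)$ entry is $\int_{\RR}r(s)e^{2i\theta}\,ds$, so $\partial_{x_{\HH}}^{k}$ sends it to $\int_{\RR}r(s)(2i/s)^{k}e^{2i\theta}\,ds$, and the substitution $\lambda=1/s$ turns this into a constant multiple of $\int_{\RR}\lambda^{k-2}r(1/\lambda)e^{4i\lambda^{2}t}e^{2i\lambda x_{\HH}}\,d\lambda$, i.e. the Fourier transform at $2x_{\HH}$ of $g(\lambda):=\lambda^{k-2}r(1/\lambda)e^{4i\lambda^{2}t}$. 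By Plancherel in $x_{\HH}$ and the elementary fact that multiplication by $\langle x_{\HH}\rangle$ corresponds to the frequency derivative $\partial_{\lambda}$, the weighted $L^{2}_{x_{\HH}}$ norm is controlled by $\|g\|_{H^{1}_{\lambda}}$. Since $k\ge 1$ the exponent satisfies $-1\le k-2\le m-1$, so $\lambda^{k-2}r(1/\lambda)\in H^{1}_{\lambda}$ by hypothesis; the frequency derivative falling on $e^{4i\lambda^{2}t}$ produces the moment $\lambda^{k-1}r(1/\lambda)\in L^{2}_{\lambda}$ (again in range) multiplied by $t$, which is the source of the polynomial factor $K(t)$. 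The $(1,2)$ entry is the conjugate analogue. Note $k\ge 1$ is essential here: without a derivative the exponent would be $k-2=-2$, outside the admissible range $[-1,m]$.

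For $N_1$ I would first dispose of the unweighted norm. Expanding $\partial_{x_{\HH}}^{k}N_1$ by Leibniz and applying Cauchy--Schwarz in $s$ to each term bounds it by $\|\partial_{x_{\HH}}^{j}(\mu-I)\|_{L^{2}_{s}}\,\|\partial_{x_{\HH}}^{k-j}w_{\theta}\|_{L^{2}_{s}}$; the first factor is $\lesssim K(t)\langle x_{\HH}\rangle^{-1}$ for $x_{\HH}\le 0$ by Lemma \ref{mu-estimate-lem}, while the second is independent of $x_{\HH}$ and equals, after $\lambda=1/s$, a multiple of $\|\lambda^{k-j-1}r(1/\lambda)\|_{L^{2}_{\lambda}}$, finite in the stated range. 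Since $\langle x_{\HH}\rangle^{-1}\in L^{2}(\RR^-)$, the unweighted norm of $\partial_{x_{\HH}}^{k}N_1$ is finite. The weight is the obstruction: Cauchy--Schwarz returns only the borderline-nonintegrable decay $\langle x_{\HH}\rangle^{-1}$, so $\langle x_{\HH}\rangle\,\partial_{x_{\HH}}^{k}N_1$ is merely $O(1)$ pointwise in $x_{\HH}$, which does not lie in $L^{2}(\RR^-)$. The missing decay must be extracted from the phase rather than from size. To this end I would use $\partial_{x_{\HH}}\theta=1/s$ in the form of the identity $x_{\HH}e^{\pm 2i\theta}=\mp\tfrac{s^{2}}{2i}\,\partial_{s}e^{\pm 2i\theta}\mp\tfrac{4t}{s}e^{\pm 2i\theta}$, and integrate by parts in $s$: the weight is converted into a spectral derivative falling on $\partial_{x_{\HH}}^{j}(\mu-I)$ and on the smooth factor $s^{\,2-(k-j)}r(s)$. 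The boundary term vanishes, and the resulting integrals are again estimated by Cauchy--Schwarz together with the $\langle x_{\HH}\rangle^{-1}$ decay of Lemma \ref{mu-estimate-lem}, but now against kernels carrying one further $s^{-1}$, i.e. one further $\lambda$-moment of $r$; this is exactly why the hypothesis is needed up to $j=m+1$, one beyond the $j\le m$ that suffices for the unweighted estimate, and the $\tfrac{4t}{s}$ term supplies an extra power of $t$.

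The main obstacle is precisely this weighted estimate on $N_1$. The naive splitting loses at the integrability threshold on $\RR^-$, so the argument must exploit the oscillation $e^{\pm 2i\theta}$, and carrying the integration by parts through cleanly requires a companion bound for the spectral-variable derivative $\partial_{s}\mu$, obtained by differentiating the integral equation \eqref{mu-integral-RHP} in $s$ and reusing the resolvent bound of Proposition \ref{inverse} with Lemma \ref{estimate-C-time}. It is this step that consumes the top moment $\lambda^{m+1}r(1/\lambda)$ and is responsible for the at-most-polynomial growth of $K(t)$.
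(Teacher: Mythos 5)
Your treatment of $N_0$ and your bookkeeping of the $\lambda$-moments are correct and match the paper's handling of the term $\int(\partial_{x_{\HH}}w_{\theta-}+\partial_{x_{\HH}}w_{\theta+})ds$ via the Fourier representation \eqref{I-Fourier}. You also correctly diagnose the crux: for the correction $N_1=\int(\mu-I)(w_{\theta-}+w_{\theta+})ds$, naive Cauchy--Schwarz combined with Lemma \ref{mu-estimate-lem} yields only the pointwise bound $\langle x_{\HH}\rangle^{-1}$, so the weighted $L^2(\RR^-)$ norm does not close. However, your proposed fix --- converting the weight into a spectral derivative via $x_{\HH}e^{\pm 2i\theta}=\mp\tfrac{s^{2}}{2i}\partial_{s}e^{\pm 2i\theta}\mp\tfrac{4t}{s}e^{\pm 2i\theta}$ and integrating by parts in $s$ --- has a genuine gap. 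After integration by parts the derivative $\partial_s$ lands on $\partial_{x_{\HH}}^{j}(\mu-I)$, and you would need $\|\partial_s\partial_{x_{\HH}}^{j}(\mu-I)\|_{L^2_s}$ to be square-integrable in $x_{\HH}$ over $\RR^-$. But differentiating the integral equation \eqref{mu-integral-RHP} in the spectral variable produces $C^{\pm}(\mu\,\partial_s w_{\theta\mp})$, and $\partial_s w_{\theta\mp}$ carries the factor $\partial_s\theta=-x_{\HH}/s^{2}-4t/s^{3}$: the very weight you are trying to eliminate reappears, so the best available bound on $\|\partial_s\mu\|_{L^2_s}$ is $O(1)$ in $x_{\HH}$ rather than decaying, and the scheme is circular. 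In addition, the terms where $\partial_s$ falls on $r(s)$ require control of $\lambda\,\partial_{\lambda}(\lambda^{j}r(1/\lambda))$ in $L^2_{\lambda}$, a weighted-derivative condition that is not implied by the hypothesis $\lambda^{j}r(1/\lambda)\in H^1_{\lambda}$.

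The paper extracts the missing decay by a purely algebraic device instead. It peels one more term off the resolvent expansion, $\mu=I+C_{w_{\theta},\RR}I+C_{w_{\theta},\RR}(I-C_{w_{\theta},\RR})^{-1}C_{w_{\theta},\RR}I$, and exploits the strict triangularity of $w_{\theta\pm}$ through the identity $\int_{\RR}(C^{+}f)\,g\,ds=-\int_{\RR}(C^{+}f)(C^{-}g)\,ds$ (and its mirror), valid because $\int(C^{+}f)(C^{+}g)\,ds=0$. Every bilinear term then becomes a product of two Cauchy projections, and Lemma \ref{estimate-C-time} gives each factor the decay $\|C^{\pm}w_{\theta\mp}\|_{L^2_z}\lesssim K(t)\langle x_{\HH}\rangle^{-1}$ for $x_{\HH}\le 0$; the weight $\langle x_{\HH}\rangle$ is absorbed by one factor in $L^{\infty}_{x_{\HH}}$ while the other supplies the $L^{2}_{x_{\HH}}(\RR^-)$ integrability. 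To repair your argument you should replace the integration by parts in $s$ with this projection trick (or an equivalent bilinear mechanism that yields $\langle x_{\HH}\rangle^{-2}$ decay of the integrand's $L^2_s$ pairing).
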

\begin{proof}
We make the following decomposition,
\begin{align} \label{mu-x-decomp}
\partial_{x_{\HH}}\int_{\RR}\mu(w_{\theta-}+w_{\theta+})ds & =\int_{\RR}\mu(\partial_{x_{\HH}}w_{\theta-}+\partial_{x_{\HH}}w_{\theta+})ds+\int_{\RR}(\partial_{x_{\HH}}\mu)(w_{\theta-}+w_{\theta+})ds.
\end{align}
The first term in the right-hand side of \eqref{mu-x-decomp} is decomposed as
\begin{equation} \label{mu-x-decomp-1} 
\int_{\RR}\mu(\partial_{x_{\HH}}w_{\theta-}+\partial_{x_{\HH}}w_{\theta+})ds  = I + II + III
\end{equation}
with
\begin{align*}
I&=\int_{\RR} (\partial_{x_{\HH}}w_{\theta-}+\partial_{x_{\HH}}w_{\theta+})ds \\
II &= \int_{\RR}[(C_{w_{\theta},\RR}I)](\partial_{x_{\HH}}w_{\theta-}+\partial_{x_{\HH}}w_{\theta+})ds\\
III &= \int_{\RR}[C_{w_{\theta},\RR}(I-C_{w_{\theta},\RR})^{-1}(C_{w_{\theta},\RR}I)](\partial_{x_{\HH}}w_{\theta-}+\partial_{x_{\HH}}w_{\theta+})ds. 
\end{align*}
First, we notice that by the Fourier transform, $I$ can be expressed as
\begin{equation} \label{I-Fourier}
I=2i\sqrt{\pi}\begin{pmatrix} 0 & \mathcal{F}(\frac{1}{\lambda}\overline{r}(1/\lambda)e^{-4i\lambda^2t})(-2x_{\HH}) \\
\mathcal{F}(\frac{1}{\lambda}r(1/\lambda)e^{4i\lambda^2t})(2x_{\HH}) & 0 \end{pmatrix}.
\end{equation}
Therefore, 
$$\|\langle x_{\HH}\rangle I\|_{L^2} \lesssim \| \langle x_{\HH}\rangle \mathcal{F}(\frac{1}{\lambda}r(1/\lambda)e^{4i\lambda^2t})(x_{\HH})\|_{L^2} \lesssim \|\frac{1}{\lambda}r(1/\lambda)e^{4i\lambda^2t}\|_{H^1_{\lambda}}$$
that is bounded if $\frac{1}{\lambda}r(1/\lambda) \in H^1_{\lambda}$ and $r(1/\lambda) \in L_{\lambda}^2$.

For $II$, we use properties of the strictly triangular matrices $w_{\theta \pm}$, i.e., $(C^{\pm}w_{\theta\mp})w_{\theta\mp}=0$, so
\begin{align} 
II &= \int_{\RR}(C^+w_{\theta-})\partial_{x_{\HH}}w_{\theta+} + \int_{\RR}(C^-w_{\theta+})\partial_xw_{\theta-} \nonumber\\
   &= -\int_{\RR}(C^+w_{\theta-})(C^-\partial_{x_{\HH}}w_{\theta+}) + \int_{\RR}(C^-w_{\theta+})(C^+\partial_{x_{\HH}}w_{\theta-} ), \label{II-trick}
\end{align}
where the last quality used $(C^+)^2=C^+$ and $(C^-)^2=-C^-$. From Lemma \ref{estimate-C-time}, $\| \langle x \rangle II\|_{L^2(\RR_-)}$ can be estimated by the Minkowski inequality and Lemma \ref{estimate-C-time} if $\lambda^jr(\lambda^{-1})\in H^1_{\lambda}$ for $-1\leq j \leq 0$ and $\lambda r(\lambda^{-1})\in L^2_{\lambda}$, that is, 
$$\|\langle x_{\HH}\rangle II\|_{L^2_{x_{\HH}}(\RR^-)}\lesssim \left\| \langle x_{\HH}\rangle \|C^{\pm}w_{\theta\mp}\|_{L^2_z}\right\|_{L^{\infty}_{x_{\HH}}(\RR^-)}\left\| \|C^{\pm}\partial_{x_{\HH}}w_{\theta\mp}\|_{L^2_z}\right\|_{L^{2}_{x_{\HH}}(\RR^-)}.$$

For $III$, we use the same way as done for $II$. Writing 
$C_{w,\RR}(I-C_{w,\RR})^{-1}(C_{w,\RR}I)=C_{w,\RR}(\mu-I)$, 
we use the same trick in \eqref{II-trick}, i.e.,
\begin{align}
III &=\int_{\RR}[C^+(\mu-I)w_{\theta-}]w_{\theta+}+\int_{\RR}[C^-(\mu-I)w_{\theta+}]w_{\theta-} \label{III}\\
&=\int_{\RR}[C^+(\mu-I)w_{\theta-}](C^-w_{\theta+})+\int_{\RR}[C^-(\mu-I)w_{\theta+}](C^+w_{\theta-}). \nonumber
\end{align}
 Thus, $\| \langle x_{\HH} \rangle III\|_{L^2(\RR^-)}$ is estimated by Lemmas \ref{estimate-C-time} and \eqref{mu-estimate} if $\lambda^jr(\lambda^{-1})\in H^1_{\lambda}$ for $-1\leq j \leq 0$, that is,
$$\| \langle x_{\HH} \rangle III\|_{L^2(\RR^-)}$$
$$\lesssim \|w_{\theta\pm}\|_{L^{\infty}} \left\| \langle x_{\HH}\rangle \|C^{\pm}w_{\theta\mp}\|_{L^2_z}\right\|_{L^{\infty}_{x_{\HH}}(\RR^-)}\left\| \|\mu-I\|_{L^2_z}\right\|_{L^{2}_{x_{\HH}}(\RR^-)}.$$

The second term in the right-hand side of \eqref{mu-x-decomp} is decomposed as
\begin{align} 
\int_{\RR}(\partial_x\mu)(w_{x-}+w_{x+})ds &= \int_{\RR}[(I-C_{w,\RR})^{-1}C_{\partial_xw,\RR}\mu](w_{x-}+w_{x+})ds \nonumber\\
&= I'+II'+III' \label{second-term}
\end{align}
with
\begin{align*}
I'&=\int_{\RR}[C_{\partial_xw,\RR}I](w_{x-}+w_{x+})ds \\
II'&=\int_{\RR}[C_{\partial_xw,\RR}(\mu-I)](w_{x-}+w_{x+})ds \\
III'&= \int_{\RR}[C_{w,\RR}(I-C_{w,\RR})^{-1}C_{\partial_xw,\RR}\mu](w_{x-}+w_{x+})ds. 
\end{align*}
We see that $\|\langle x_{\HH}\rangle I'\|_{L^2_{x_{\HH}}}$ is estimated in the same way as done in $II$ which requires $\lambda^jr(\lambda^{-1})\in H^1_{\lambda}$ for $-1\leq j \leq 0$ and $\lambda r(\lambda^{-1})\in L^2_{\lambda}$. 

For $II'$, we used the same way as done in \eqref{III}, and we see that, to estimate $\|\langle x_{\HH}\rangle II'\|_{L^2_{x_{\HH}}(\RR^-)}$, we need $\lambda^jr(\lambda^{-1})\in H^1_{\lambda}$ for $-1\leq j \leq 1$ due to $\|\partial_{x_{\HH}}w_{\theta\pm}\|_{L^{\infty}}$.   

For $III'$, we carry out in the same way as done in $III$, and we see that, to estimate $\|\langle x_{\HH}\rangle III'\|_{L^2_{x_{\HH}}(\RR^-)}$, we need $\lambda^jr(\lambda^{-1})\in H^1_{\lambda}$ for $-1\leq j \leq 1$ due to $\|\partial_{x_{\HH}}\mu\|_{L^{\infty}}$.

Putting together all results, the integral \eqref{mu-x-decomp} belongs to $L^2(\langle x_{\HH}\rangle^2dx_{\HH},\RR^-)$ if $\lambda^jr(\lambda^{-1})\in H^1_{\lambda}$ for $-1\leq j \leq 1$.

The inductive argument can be applied to show that $\partial_{x_{\HH}}^{m+1} \int_{\RR}\mu(w_{x-}+w_{x+})ds$ $\in L^2(\langle x_{\HH}\rangle^2dx_{\HH},\RR^-)$  when $\lambda^jr(\lambda^{-1})\in H^1$ for $-1\leq j \leq 1+m$.
\end{proof}
\subsection{$(x_{\HH},t)$-uniform bound on $|\partial_{\HH}m^{(1)}|$}
Lastly, we will show that $|\partial_{x_{\HH}}m^{(1)}_{12}|<1$. This follows easily from the previous proofs with slight modifications. 
\begin{lemma} \label{small-mu}
 Suppose ${\lambda}^{j}r(\lambda^{-1})\in H^1_{\lambda}$ for $-1 \leq j \leq 1$, and the norm
$$\|{\lambda}^{-1}r(1/\lambda)\|_{L_{\lambda}^2}+\|r(1/\lambda)\|_{L^2_{\lambda}}$$ 
is sufficiently small. Then,
$$\|\mu-I\|_{L^2}=o(1),\quad \|\partial_{x_{\HH}}\mu\|_{L^2} = o(1),$$
uniformly in $x_{\HH}$ and $t$.
\end{lemma}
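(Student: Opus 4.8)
The plan is to prove this as the smallness refinement of Lemma \ref{mu-estimate-lem}: the estimates there already give $\|\mu-I\|_{L^2}$ and $\|\partial_{x_{\HH}}\mu\|_{L^2}$ controlled by norms of $r$, so the only new work is to book-keep \emph{which} norms carry the smallness and which merely need to stay finite. The smallness hypothesis is imposed only on $\|\lambda^{-1}r(1/\lambda)\|_{L^2_\lambda}+\|r(1/\lambda)\|_{L^2_\lambda}$, whereas the full assumption $\lambda^j r(\lambda^{-1})\in H^1_\lambda$ for $-1\le j\le 1$ will be used only to keep certain $L^\infty$ quantities bounded. The two ingredients are (i) Proposition \ref{inverse}, which bounds $(I-C_{w_\theta,\RR})^{-1}$ on $L^2$ by a constant $k$ depending only on $\eta=\|w_\pm\|_{L^\infty}$, and (ii) the Fourier computation \eqref{C-estimate-main}, which I will read off to see that $\|C_{w_\theta,\RR}I\|_{L^2}$ is proportional to the small norm, uniformly in $x_{\HH}$ and $t$.

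First I would treat $\|\mu-I\|_{L^2}$ via \eqref{mu-integral-2}. From \eqref{v-triangle} one has $w_+=\bigl(\begin{smallmatrix}0&0\\ r&0\end{smallmatrix}\bigr)$ and $w_-=\bigl(\begin{smallmatrix}0&\bar r\\ 0&0\end{smallmatrix}\bigr)$, so $\eta=\|r\|_{L^\infty}$, which is finite (though not assumed small) because $r(1/\lambda)\in H^1_\lambda\hookrightarrow L^\infty_\lambda$; hence $\|(I-C_{w_\theta,\RR})^{-1}\|_{L^2}\le k<\infty$. On the other side, the chain of equalities in \eqref{C-estimate-main} shows
$$\|(C^-_{\RR}w_{\theta+})_{21}\|_{L^2}^2=2\int_{-2x_{\HH}}^{\infty}|\mathcal{F}(\lambda^{-1}r(\lambda^{-1})e^{4i\lambda^2t})(\xi)|^2\,d\xi\le 2\|\lambda^{-1}r(\lambda^{-1})\|_{L^2}^2,$$
where the cutoff only decreases the integral and the unimodular factor $e^{4i\lambda^2t}$ is killed by Plancherel, giving a bound independent of $x_{\HH}$ and $t$; the companion entry contributes $\|r(\lambda^{-1})\|_{L^2}$. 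Thus $\|C_{w_\theta,\RR}I\|_{L^2}\lesssim \|\lambda^{-1}r(\lambda^{-1})\|_{L^2}+\|r(\lambda^{-1})\|_{L^2}$ uniformly, and $\|\mu-I\|_{L^2}\le k\,\|C_{w_\theta,\RR}I\|_{L^2}=o(1)$ uniformly in $x_{\HH},t$.

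Next I would estimate $\|\partial_{x_{\HH}}\mu\|_{L^2}$ from \eqref{mu-x-integral} with $n=1$, writing $\mu=I+(\mu-I)$ so that $\partial_{x_{\HH}}\mu=(I-C_{w_\theta,\RR})^{-1}\bigl(C_{\partial_{x_{\HH}}w_\theta,\RR}I+C_{\partial_{x_{\HH}}w_\theta,\RR}(\mu-I)\bigr)$. The first summand is handled exactly as in Lemma \ref{estimate-C}: one $x_{\HH}$-derivative raises the Fourier weight from $\lambda^{-1}$ to $\lambda^{0}$, so $\|C_{\partial_{x_{\HH}}w_\theta,\RR}I\|_{L^2}\lesssim\|r(1/\lambda)\|_{L^2}=o(1)$, again with no $t$-growth since differentiating $e^{2i\theta}$ in $x_{\HH}$ brings down $2i/z$ rather than a power of $t$. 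For the second summand I would reuse the bound from the proof of Lemma \ref{mu-estimate-lem}, namely $\|C_{\partial_{x_{\HH}}w_\theta,\RR}(\mu-I)\|_{L^2}\lesssim\|z^{-1}r(z)\|_{L^\infty_z}\|\mu-I\|_{L^2}$, where $\|z^{-1}r(z)\|_{L^\infty_z}=\|\lambda r(1/\lambda)\|_{L^\infty_\lambda}<\infty$ by $\lambda r(1/\lambda)\in H^1_\lambda$ and $\|\mu-I\|_{L^2}=o(1)$ from the previous step. Combining, $\|\partial_{x_{\HH}}\mu\|_{L^2}\le k\bigl(o(1)+\|\lambda r(1/\lambda)\|_{L^\infty}o(1)\bigr)=o(1)$, uniformly in $x_{\HH}$ and $t$.

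The main obstacle is conceptual rather than computational: one must cleanly separate the factors carrying smallness (the $L^2$ norms $\|\lambda^{-1}r\|_{L^2}$ and $\|r\|_{L^2}$, which may be sent to zero) from those needing only finiteness (the operator-norm constant $k$ through $\|r\|_{L^\infty}$, and the prefactor $\|\lambda r(1/\lambda)\|_{L^\infty}$ through $\lambda r(1/\lambda)\in H^1_\lambda$), so that every product is genuinely $o(1)$. The accompanying delicate point is the claimed \emph{uniformity in $t$}: unlike the weighted, higher-order estimates of Lemma \ref{estimate-C-time} which carry a polynomial factor $K(t)$, the present claim requires only the unweighted $L^2$ bounds at orders $k\le 1$, where the time dependence enters solely through the unimodular $e^{4i\lambda^2t}$ and is annihilated by Plancherel; this is precisely why no $K(t)$ appears and the $o(1)$ is uniform in both $x_{\HH}$ and $t$.
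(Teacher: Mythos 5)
Your proposal is correct and follows essentially the same route as the paper: it combines the uniform Fourier/Plancherel bounds $\|(C^{\mp}_{\RR}w_{\theta\pm})_{21}\|_{L^2}\lesssim\|\lambda^{-1}r(\lambda^{-1})\|_{L^2_{\lambda}}$ and $\|(C^{\mp}_{\RR}\partial_{x_{\HH}}w_{\theta\pm})_{21}\|_{L^2}\lesssim\|r(\lambda^{-1})\|_{L^2_{\lambda}}$ from \eqref{C-estimate-main} with the resolvent bound of Proposition \ref{inverse} and the splitting \eqref{mu-ineq1}--\eqref{mu-ineq2}, exactly as the paper does with its constants $K_2$ (carrying $\|r\|_{L^{\infty}}$) and $K_3$ (carrying $\|z^{-1}r\|_{L^{\infty}}$). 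Your explicit remark that Plancherel annihilates the unimodular factor $e^{4i\lambda^2 t}$, so that no $K(t)$ growth appears at this order, makes precise a point the paper leaves implicit.
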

\begin{proof}
From equation \eqref{C-estimate-main}, we have
\begin{equation}\label{C-estimate-1}
\|(C^-_{\RR}w_{\theta +})_{21}\|_{L^2} \leq K_1 \|\lambda^{-1}r(\lambda^{-1})\|_{L^2_{\lambda}},
\end{equation}
and similarly
\begin{equation}\label{C-estimate-2}
\|(C^-_{\RR}\partial_{x_{\HH}}w_{\theta +})_{21}\|_{L^2} \leq K_1 \|r(\lambda^{-1})\|_{L^2_{\lambda}},
\end{equation}
where $K_1$ is some finite constant independent of $x_{\HH}$, $t$, and $r$. We can easily deduce the same bounds for $C^+_{\RR}w_{\theta -}$ as well. From \eqref{mu-ineq1} and \eqref{C-estimate-1}, we have
\begin{equation}\label{mu-uniform-estimate}
\|\mu-I\|_{L^2} \leq K_2  \|\lambda^{-1}r(1/\lambda)\|_{L^2_{\lambda}},
\end{equation}
where $K_2$ is some constant that contains $\|r\|_{L^{\infty}}$ from Proposition \ref{inverse} which is bounded since $r(1/\lambda)\in H^1_{\lambda}$.
From \eqref{mu-ineq2}, using the above estimate, we have
\begin{align}
\|\partial_{x_{\HH}}\mu\|_{L^2} &\leq K_2K_3 \|\mu-I\|_{L^2}+K_2\|r(1/\lambda)\|_{L^2_{\lambda}} \nonumber \\
&\leq K_2^2K_3 \|\lambda^{-1}r(1/\lambda)\|_{L^2_{\lambda}}+K_2\|r(1/\lambda)\|_{L^2_{\lambda}}. \label{mu-x-uniform-estimate}
\end{align}
where $K_3$ is some constant that contains $\|z^{-1}r\|_{L^{\infty}}$ which is bounded since $\lambda r(1/\lambda)\in H^1_{\lambda}$.
\end{proof}

From \eqref{Reconstruct-formula-inverse}, 
we have the $(1,2)$ th element of $\partial_{x_{\HH}}m^{(1)}$ as
$$\partial_{x_{\HH}}m^{(1)}_{12}=\frac{-1}{2\pi i} \partial_x\int_{\RR}[\mu(w_{\theta-}+w_{\theta+})]_{12}ds$$

\begin{lemma} \label{smallness-m}
Suppose the same assumption for $r$ in Lemma \ref{small-mu}, then 
$$|\partial_{x_{\HH}}m^{(1)}_{12}|<1$$
uniformly in $x_{\HH}$ and $t$.
\end{lemma}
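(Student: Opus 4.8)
The plan is to bound $|\partial_{x_{\HH}} m^{(1)}_{12}|$ pointwise in $(x_{\HH}, t)$ by exploiting the explicit reconstruction formula together with the smallness of $\mu - I$ and $\partial_{x_{\HH}}\mu$ established in Lemma \ref{small-mu}. Recalling that
$$\partial_{x_{\HH}}m^{(1)}_{12}=\frac{-1}{2\pi i}\,\partial_{x_{\HH}}\int_{\RR}[\mu(w_{\theta-}+w_{\theta+})]_{12}\,ds,$$
I would first expand this derivative as in the decomposition \eqref{mu-x-decomp}, writing it as the sum of a term where the derivative hits the weights $w_{\theta\pm}$ and a term where it hits $\mu$. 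The off-diagonal $(1,2)$ entry of $w_{\theta-}+w_{\theta+}$ is $\bar r\,e^{-2i\theta}$, so the leading contribution is the linear-in-$r$ piece $I$ from \eqref{mu-x-decomp-1}, whose Fourier-transform expression \eqref{I-Fourier} shows it is controlled by $\|\lambda^{-1}r(1/\lambda)\|_{L^2_\lambda}$ after an $L^\infty_{x_{\HH}}$ estimate.

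Next I would estimate the remaining (quadratic) pieces $II$, $III$, and the second term of \eqref{mu-x-decomp}. Here the key observation is that each of these involves at least one factor of $\mu - I$ or $\partial_{x_{\HH}}\mu$, or a product of two Cauchy projections of the weights. Using the triangular structure identities $(C^{\pm}w_{\theta\mp})w_{\theta\mp}=0$ and $C^+-C^-=I$ exactly as in \eqref{II-trick} and \eqref{III}, together with the $L^\infty$-in-$\lambda$ bound $\|z^{-1}r\|_{L^\infty}\lesssim \|\lambda r(1/\lambda)\|_{H^1_\lambda}$ and the uniform smallness bounds \eqref{mu-uniform-estimate} and \eqref{mu-x-uniform-estimate}, I would conclude that every such term carries at least one factor of the small quantity $\|\lambda^{-1}r(1/\lambda)\|_{L^2_\lambda}+\|r(1/\lambda)\|_{L^2_\lambda}$, and hence is $o(1)$ uniformly in $x_{\HH}$ and $t$. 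Crucially, all the $t$-dependence sits inside the oscillatory factors $e^{\pm 4i\lambda^2 t}$, which preserve the relevant $L^2_\lambda$ and $H^1_\lambda$ norms (up to the slight norm trade-off noted before Lemma \ref{estimate-C-time}), so the smallness is genuinely $t$-uniform rather than merely pointwise-in-$t$.

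Combining the linear estimate on $I$ with the $o(1)$ bounds on the higher-order terms yields
$$|\partial_{x_{\HH}}m^{(1)}_{12}|\lesssim \|\lambda^{-1}r(1/\lambda)\|_{L^2_\lambda}+\|r(1/\lambda)\|_{L^2_\lambda}=o(1),$$
so that choosing the norm in the hypothesis of Lemma \ref{small-mu} sufficiently small forces $|\partial_{x_{\HH}}m^{(1)}_{12}|<1$ uniformly. I expect the main obstacle to be bookkeeping rather than any deep difficulty: one must verify that \emph{every} term in the full expansion of $\partial_{x_{\HH}}\int_{\RR}\mu(w_{\theta-}+w_{\theta+})\,ds$ is either linear in $r$ with the correct $L^2_\lambda$ control or else contains a genuinely small factor, and that none of the constants $K_1,K_2,K_3$ secretly depend on $x_{\HH}$ or $t$. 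The smallness hypothesis is used only to make the right-hand side strictly less than $1$; the structural estimates themselves follow verbatim from the proofs of Lemmas \ref{estimate-formula} and \ref{small-mu}, so the argument is essentially a re-reading of those proofs while tracking the dependence on $\|\lambda^{-1}r(1/\lambda)\|_{L^2_\lambda}+\|r(1/\lambda)\|_{L^2_\lambda}$.
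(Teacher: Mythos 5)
Your proposal is correct and follows essentially the same route as the paper: the same decomposition \eqref{mu-x-decomp}, \eqref{mu-x-decomp-1}, and \eqref{second-term}, the Fourier representation \eqref{I-Fourier} for the linear term, and Cauchy--Schwarz with the uniform bounds \eqref{C-estimate-1}, \eqref{C-estimate-2}, \eqref{mu-uniform-estimate}, \eqref{mu-x-uniform-estimate} for the quadratic terms $II$, $III$, $I'$, $II'$, $III'$. The only cosmetic difference is that the paper bounds the term $I$ via $\|\lambda^{-1}r(1/\lambda)\|_{L^1_{\lambda}}\lesssim \|\lambda^{-1}r(1/\lambda)\|_{L^2_{\lambda}}+\|r(1/\lambda)\|_{L^2_{\lambda}}$ rather than by an $L^2_{\lambda}$ norm alone, which is consistent with your final estimate.
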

\begin{proof}
We want to estimate \eqref{mu-x-decomp} uniformly in $x_{\HH}$ and $t$. 

First, from \eqref{I-Fourier}, we easily see that 
$$|I| \lesssim \|\frac{1}{\lambda}r(1/\lambda)\|_{L^1_{\lambda}}\lesssim \|\frac{1}{\lambda}r(1/\lambda)\|_{L^2_{\lambda}(\langle \lambda \rangle^2d\lambda)}.$$ 
The right-hand side above is bounded since $\|\frac{1}{\lambda}r(1/\lambda)\|_{L^2_{\lambda}}+\|r(1/\lambda)\|_{L^2_{\lambda}}$ is bounded. 

For $II$, from \eqref{II-trick}, \eqref{C-estimate-1} and \eqref{C-estimate-2}, we estimate as follows
\begin{align*}
|II| &\leq \|(C^+w_{\theta-})\|_{L^2}\|(C^-\partial_{x_{\HH}}w_{\theta+})\|_{L^2} + \|(C^-w_{\theta+})\|_{L^2}\|(C^+\partial_{x_{\HH}}w_{\theta-} )\|_{L^2}\\
&\lesssim  \|\lambda^{-1}r(\lambda^{-1})\|_{L^2_{\lambda}} \| r(\lambda^{-1})\|_{L^2_{\lambda}}.
 \end{align*}
The term $I'$ in \eqref{second-term} is estimated in an analogous way. 

Similarly, for $III$, $II'$ and $III'$, from \eqref{C-estimate-1}, \eqref{C-estimate-2}, \eqref{mu-uniform-estimate}, \eqref{mu-x-uniform-estimate}, they can be all estimated as
$$
|III|, |II'| \lesssim \|\lambda^{-1}r(\lambda^{-1})\|_{L^2_{\lambda}}^2 
$$
and 
$$
|III'| \lesssim \|\lambda^{-1}r(\lambda^{-1})\|_{L^2_{\lambda}}(\|\lambda^{-1}r(\lambda^{-1})\|_{L^2_{\lambda}}+\|r(\lambda^{-1})\|_{L^2_{\lambda}}).
$$
In the above inequalities, $\lesssim$ hides some constant dependent on $\|\lambda^jr(1/\lambda)\|_{L^{\infty}_{\lambda}}$, $j=-1,0$.
\end{proof}

\begin{remark}
In the direct problem, Corollary \ref{smallness-a-b} implies that smallness assumption on $r$ in Lemma \ref{small-mu} is satisfied if $\left.q\right|_{t=0}$ is sufficiently small in $X_1$ norm.
\end{remark}

\subsection{Recovery/estimate of $q$ from the reconstruction formula \eqref{Reconstruct-formula-inverse}}
 
\begin{lemma} \label{qH-construct}
Suppose ${\lambda}^{j}r(\lambda^{-1})\in H^1_{\lambda}$ for $-1 \leq j \leq m+1$ and 
$$\|\lambda^{-1}r(1/\lambda)\|_{L^2_{\lambda}}+\|r(1/\lambda)\|_{L^2_{\lambda}}$$
 is sufficiently small. Then, the potential $q_{\HH}(x_{\HH},t)$ of the form 
\begin{equation}\label{reconstruct-formula-2}
q_{\HH}\langle q_{\HH}\rangle^{-1} =\partial_{x_{\HH}}m^{(1)}_{12}
\end{equation}
belongs to $X_m(\RR^-)$ for every $t \geq 0$.
\end{lemma}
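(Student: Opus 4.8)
The plan is to invert the reconstruction formula so as to express $q_{\HH}$ explicitly as a smooth nonlinear function of $u := \partial_{x_{\HH}}m^{(1)}_{12}$, and then to transport the weighted Sobolev bounds already available for $u$ through that nonlinearity. First I would solve \eqref{reconstruct-formula-2} for $q_{\HH}$: squaring $q_{\HH}\langle q_{\HH}\rangle^{-1}=u$ and using $\langle q_{\HH}\rangle^2=1+|q_{\HH}|^2$ gives $|q_{\HH}|^2=|u|^2/(1-|u|^2)$, hence the explicit formula
$$q_{\HH}=F(u,\bar u):=\frac{u}{\sqrt{1-|u|^2}}.$$
By Lemma \ref{smallness-m} together with the smallness hypothesis on $r$, the quantity $|u|=|\partial_{x_{\HH}}m^{(1)}_{12}|$ is bounded by a constant $c<1$ uniformly in $x_{\HH}$ and $t$ (the proof of Lemma \ref{smallness-m} in fact controls $|u|$ by the small norms of $r$). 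Consequently $F$ and all of its partial derivatives in $(u,\bar u)$ are bounded on the region $\{|u|\le c\}$, a fact I will use repeatedly.

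Next I would record the weighted bounds on the derivatives of $u$. Since the reconstruction formula \eqref{Reconstruct-formula-inverse} identifies $m^{(1)}_{12}$, up to a constant, with the $(1,2)$ entry of $\int_{\RR}\mu(w_{\theta-}+w_{\theta+})\,ds$, we have $\partial_{x_{\HH}}^k u\propto \partial_{x_{\HH}}^{k+1}\big(\int_{\RR}\mu(w_{\theta-}+w_{\theta+})\,ds\big)_{12}$. Hence Lemma \ref{estimate-formula}, applied with the index $k+1$ running over $1,\dots,m+1$, yields
$$\|\langle x_{\HH}\rangle\,\partial_{x_{\HH}}^k u\|_{L^2(\RR^-)}\le K(t),\qquad k=0,1,\dots,m,$$
with $K(t)$ finite (growing at most polynomially) for each $t$; this is exactly where the hypothesis $\lambda^j r(\lambda^{-1})\in H^1_\lambda$ for $-1\le j\le m+1$ enters. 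In particular $\partial_{x_{\HH}}^k u\in L^2(\RR^-)$ for $k\le m$ and $\partial_{x_{\HH}}^{k+1}u\in L^2(\RR^-)$ for $k\le m-1$, so by the one-dimensional Sobolev embedding $\partial_{x_{\HH}}^k u\in H^1(\RR^-)\hookrightarrow L^\infty(\RR^-)$ for $k\le m-1$, with a uniform (in $t$) $L^\infty$ bound on each such derivative.

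With these ingredients the conclusion $q_{\HH}\in X_m(\RR^-)$ reduces to bounding $\langle x_{\HH}\rangle\,\partial_{x_{\HH}}^j q_{\HH}$ in $L^2(\RR^-)$ for $j=0,\dots,m$ (the case $j=0$ already gives $q_{\HH}\in L^2$ since $\langle x_{\HH}\rangle\ge 1$). For $j=0$ one notes $|q_{\HH}|=|F(u,\bar u)|\lesssim|u|$ on $\{|u|\le c\}$, so $\|\langle x_{\HH}\rangle q_{\HH}\|_{L^2(\RR^-)}\lesssim\|\langle x_{\HH}\rangle u\|_{L^2(\RR^-)}\le K(t)$. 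For $1\le j\le m$ I would expand $\partial_{x_{\HH}}^j F(u,\bar u)$ by the Fa\`a di Bruno/Leibniz rule: it is a finite sum of terms $(\partial_u^a\partial_{\bar u}^b F)(u,\bar u)\,\prod_i \partial_{x_{\HH}}^{k_i}u\,\prod_{i'}\partial_{x_{\HH}}^{l_{i'}}\bar u$ with $\sum k_i+\sum l_{i'}=j$ and each order $\ge 1$, every coefficient $\partial_u^a\partial_{\bar u}^b F$ being bounded. In each product at most one factor carries the top order $j$ (the principal terms $(\partial_u F)\partial_{x_{\HH}}^j u$ and its conjugate); there I place the weight on $\partial_{x_{\HH}}^j u$ and invoke $\|\langle x_{\HH}\rangle\partial_{x_{\HH}}^j u\|_{L^2}\le K(t)$ with $j\le m$. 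In every remaining term all factors have order $\le j-1\le m-1$; I place the weight on one factor, estimate it by $\|\langle x_{\HH}\rangle\partial_{x_{\HH}}^{k}u\|_{L^2}\le K(t)$, and estimate the others in $L^\infty$ by the embedding above. Summing gives $\|\langle x_{\HH}\rangle\partial_{x_{\HH}}^j q_{\HH}\|_{L^2(\RR^-)}\le K(t)$ for all $j\le m$, i.e.\ $q_{\HH}\in X_m(\RR^-)$ for every $t\ge 0$.

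I expect the main obstacle to be organizational rather than analytic: the genuine estimates are already packaged in Lemmas \ref{estimate-formula} and \ref{smallness-m}, so the delicate points are to set up the chain-rule bookkeeping so that (i) the smallness of $|u|$ is genuinely used to keep every coefficient $\partial_u^a\partial_{\bar u}^b F$ bounded, and (ii) the Sobolev-embedding step for intermediate factors is only ever invoked at orders $\le m-1$, where the extra derivative $\partial_{x_{\HH}}^{k+1}u\in L^2$ is available. Keeping the index accounting consistent with the one-sided domain $\RR^-$, on which all the earlier lemmas are stated, is the last thing I would verify.
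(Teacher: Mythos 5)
Your proof is correct and takes essentially the same route as the paper's: both arguments rest on Lemma \ref{smallness-m} to keep $|\partial_{x_{\HH}}m^{(1)}_{12}|$ uniformly below $1$ (hence $\langle q_{\HH}\rangle$ bounded) and on Lemma \ref{estimate-formula} for the weighted $L^2$ bounds on $\partial_{x_{\HH}}^{k+1}m^{(1)}_{12}$ for $k\le m$, with the same index accounting forcing $\lambda^j r(1/\lambda)\in H^1_\lambda$ up to $j=m+1$. The only difference is organizational: you invert the relation explicitly as $q_{\HH}=u/\sqrt{1-|u|^2}$ and run Fa\`a di Bruno, whereas the paper differentiates the implicit relation $q_{\HH}\langle q_{\HH}\rangle^{-1}=u$ and isolates the top derivative of $q_{\HH}$ via a reverse-triangle-inequality lower bound $|\partial_{x_{\HH}}^m m^{(1)}_{12}|+|A|\ge |\partial_{x_{\HH}}^m q_{\HH}|/\langle q_{\HH}\rangle^3$ --- your version is, if anything, more explicit about the $L^\infty$ control of the intermediate factors, which the paper leaves implicit.
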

\begin{proof}
From Lemma \ref{smallness-m}, we have that
$$\frac{1}{1-|\partial_{x_{\HH}}m^{(1)}_{12}|^2}$$
is bounded. This means that $|q_{\HH}|^2=\frac{|\partial_{x_{\HH}}m^{(1)}_{12}|^2}{1-|\partial_{x_{\HH}}m^{(1)}_{12}|^2}$ is also bounded. Thus, 
\begin{align*}
\|\langle x_{\HH}\rangle q_{\HH}\|_{L^2(\RR^-)} &=\|\langle x_{\HH}\rangle\langle q_{\HH}\rangle \partial_{x_{\HH}}m^{(1)}_{12}\|_{L^2(\RR^-)} \\
&\leq \|\langle q_{\HH}\rangle\|_{L^{\infty}(\RR^-)}\|\langle x_{\HH}\rangle\partial_{x_{\HH}}m^{(1)}_{12}\|_{L^2(\RR^-)} < \infty,
\end{align*}
by Lemma \ref{estimate-formula} if $\lambda^jr(\lambda^{-1}) \in H^1_{\lambda}$ for $-1 \leq j \leq 1$.
In order to estimate $\partial_{x_{\HH}}q_{\HH}$, we note that
\begin{equation} \label{diff}
 \partial_{x_{\HH}}\frac{q_{\HH}}{\langle q_{\HH}\rangle} =\frac{\partial_{x_{\HH}}q_{\HH}}{\langle q_{\HH}\rangle}\left( \frac{\frac{1}{2}|q_{\HH}|^2+1}{\langle q_{\HH}\rangle^2}\right)-\frac{\frac{1}{2}q_{\HH}^2}{\langle q_{\HH}\rangle} \frac{\partial_{x_{\HH}}\bar{q}_{\HH}}{\langle q_{\HH}\rangle^2}
 \end{equation}
and the reverse triangle inequality yields
$$|\partial_{x_{\HH}}\frac{q_{\HH}}{\langle q_{\HH}\rangle}| \geq \frac{|\partial_{x_{\HH}}q_{\HH}|}{\langle q_{\HH}\rangle^3}.$$
What follows is that $|\partial_{x_{\HH}}q_{\HH}| \leq \langle q_{\HH}\rangle^3 |\partial_{x_{\HH}}^2m^{(1)}_{12}|$. This implies that $\langle x_{\HH}\rangle\partial_{x_{\HH}}q_{\HH}\in L^2(\RR^-)$ by Lemma \ref{estimate-formula} if $\lambda^jr(\lambda^{-1}) \in H^1_{\lambda}$ for $-1 \leq j \leq 2$.

We can do the same trick for the higher derivatives, i.e., differentiation of $\frac{q_{\HH}}{\langle q_{\HH} \rangle}$ in $x_{\HH}$ after $m$ times has form 
$$ \partial_{x_{\HH}}^{m}m^{(1)}_{12}= $$
$$\partial_{x_{\HH}}^{m}\frac{q_{\HH}}{\langle q_{\HH}\rangle} =A(\partial^{m-1}_{x_{\HH}}q_{\HH},  \partial^{m-2}_{x_{\HH}}q_{\HH}, \cdots,q_{\HH})+\frac{\partial_{x_{\HH}}^{m}q_{\HH}}{\langle q_{\HH}\rangle}\left( \frac{\frac{1}{2}|q_{\HH}|^2+1}{\langle q_{\HH}\rangle^2}\right)-\frac{\frac{1}{2}q_{\HH}^2}{\langle q_{\HH}\rangle} \frac{\partial_{x_{\HH}}^{m}\bar{q}_{\HH}}{\langle q_{\HH}\rangle^2},$$
where $A$ contains the lower derivatives. Since the lower derivative terms are estimated already, i.e., $\langle x_{\HH}\rangle A \in L^2(\RR^-)$, moving $A$ to the left side and applying the reverse triangle inequality, we have 
$$|\partial_{x_{\HH}}^{m}m^{(1)}_{12}| +|A| \geq \frac{|\partial_{x_{\HH}}^mq_{\HH}|}{\langle q_{\HH}\rangle^3}.$$
This gives $\langle x_{\HH}\rangle \partial_{x_{\HH}}^mq_{\HH}\in L^2(\RR^-)$ by Lemma \ref{estimate-formula} if $\lambda^jr(\lambda^{-1}) \in H^1$ for $-1 \leq j \leq 1+m$.
\end{proof}

In Lemma \ref{qH-construct}, $q_{\HH}$ is estimated in the half line $x_{\HH}\in \RR^-$. In order to address the case of $\RR^+$, we use the different factorization from \eqref{v-triangle}. 
Set 
$$\delta(z)=\exp\left(\frac{1}{2\pi i}\int_{\RR}\frac{\log(1+|r(s)|^2)}{s-z}ds\right)$$
that satisfies $\delta_+=\delta_-(1+|r|^2)$ on $z\in \RR$ and $\delta(z)\rightarrow 1$ as $|z|\rightarrow \infty$, where $\delta_{\pm}$ are the non-tangential limits of $\delta$ to the real line. One can verify that
$$\delta_-^{\sigma_3}(e^{-i \theta \sigma_3}ve^{i \theta \sigma_3})\delta_+^{-\sigma_3}=
\begin{pmatrix}1 & 0 \\ r\delta_-^{-1}\delta_+^{-1}e^{i\theta} & 1\end{pmatrix}\begin{pmatrix} 1 & \bar{r}\delta_-\delta_+e^{-i\theta} \\ 0 & 1 \end{pmatrix}$$
and we set
\begin{equation} \label{second-factorization}
v_-= \begin{pmatrix}1 & 0 \\ - \rho e^{i\theta} & 1\end{pmatrix}, \quad v_+=\begin{pmatrix} 1 & \bar{\rho}e^{-i\theta} \\ 0 & 1 \end{pmatrix},
\end{equation}
$$\rho=r \delta_-^{-1}\delta_+^{-1}.$$
In the following, we denote $\Delta=\delta_-^{-1}\delta_+^{-1}$ and see that $\bar{\rho}=\bar{r}\bar{\Delta}$, since $\overline{\delta_+ \delta_-}=(\delta_+ \delta_-)^{-1}$ that follows from $\delta_+\delta_-=\exp\left(-iH\log(1+|r|^2)\right),$ where $H$ is the Hilbert transform, $Hf=\frac{1}{\pi}\lim_{\epsilon\rightarrow 0}\int_{|z-s|>\epsilon}\frac{f(s)}{s-z}ds$. We have the following estimate
$$\|\lambda \partial_{\lambda} (\delta_-\delta_+)\|_{L^2_{\lambda}}=\|\partial_z(\delta_-\delta_+)\|_{L^2_{z}}=\left\|\frac{\partial_z(|r|^2)}{1+|r|^2}\right\|_{L^2_{z}}\leq \|\lambda \partial_{\lambda}(|r(\lambda^{-1})|^2)\|_{L^2_{\lambda}}.$$
The right-hand side above is bounded if $r(1/\lambda), \lambda r(1/\lambda)\in H^1_{\lambda}$. Therefore, writing
$$\partial_{\lambda}(\lambda^k \rho)=\lambda \partial_{\lambda}(\Delta) \lambda^{k-1}r(1/\lambda)+\Delta\partial_{\lambda}(\lambda^kr(1/\lambda)), \quad k\geq 0,$$
we see that the first and second terms in the right-hand side above are bounded in $L^2_{\lambda}$ if $\lambda^jr(1/\lambda) \in H^1_{\lambda}$ for $j=-1,0,\dots, k$ $(k\geq 1)$, or $j=-1,0,1$ $(k=0)$. Furthermore, $\Delta(1/\lambda)$ at $\lambda=0$ is continuous, since if $\lambda^{-1}r(1/\lambda) \in H^1_{\lambda}$,
$$\frac{1}{\lambda^2}\log(1+|r(1/\lambda)|^2)=\frac{1}{\lambda^2}|r|^2+\mathcal{O}(|r|^4/\lambda^2)$$
for $|\lambda|$ sufficiently small and, thus, $\left.\Delta(1/\lambda)\right|_{\lambda=0}=1$ by the change of variables in the Hilbert transform. 

The above discussion implies that if $\lambda^{j}r(1/\lambda)\in H^1_{\lambda}$ for $j=-1,0,\cdots, m$ with $m\geq 1$, then $\lambda^{j}\rho(1/\lambda)\in H^1_{\lambda}$ for $j=-1,0,\cdots, m$ with $m\geq 1$.

Repetitions of all previous Proofs with the factorization \eqref{second-factorization} can show that Lemma \ref{qH-construct} is extended to estimating $q_{\HH}(x_{\HH})$ for $x_{\HH}$ for the whole line.  

\begin{corollary}[Corollary of Lemma \ref{epsilon-lemma} and Lemma \ref{qH-construct}] \label{x-construct}
Suppose ${\lambda}^{j}r(\lambda^{-1})\in H^1_{\lambda}$ for $-1 \leq j \leq m+1$ with $m\geq 1$ and 
$$\|\lambda^{-1}r(1/\lambda)\|_{L^2_{\lambda}}+\|r(1/\lambda)\|_{L^2_{\lambda}}$$
 is sufficiently small. Let $q_{\HH} \in X_m$ be determined in Lemma \ref{qH-construct}. Define $\epsilon(x,t)$ by
\begin{equation} \label{ode-integral-again}
\epsilon(x,t)=\int_{-\infty}^x(\langle q_{\HH}(y+\epsilon(y,t),t)\rangle-1)dy.
\end{equation}
Then, for every $t\in [0,\infty)$, there exists a unique monotone increasing bounded continuous solution $\epsilon(\cdot,t)$ in \eqref{ode-integral-again} with $\epsilon(-\infty,t)=0$. Thus, the variable $x$ is uniquely defined from
$$x_{\HH}=x+\epsilon(x,t)$$
for every $t\in [0,\infty)$.
\end{corollary}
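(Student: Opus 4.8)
The plan is to recognize the statement as nothing more than a time-parametrized restatement of Lemma~\ref{epsilon-lemma}: the variable $t$ enters \eqref{ode-integral-again} only as a passive parameter, so I would freeze $t\in[0,\infty)$ and run the entire fixed-point argument of Lemma~\ref{epsilon-lemma} verbatim for the single-variable profile $q_{\HH}(\cdot,t)$. Once existence and uniqueness are established for each fixed $t$, the conclusion holds for all $t\in[0,\infty)$ since $t$ was arbitrary, and the displayed equation \eqref{ode-integral-again} is literally \eqref{ode-integral} with $q_{\HH}$ replaced by $q_{\HH}(\cdot,t)$.

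First I would verify that the hypothesis of Lemma~\ref{epsilon-lemma} is met for every $t$. By Lemma~\ref{qH-construct}, together with the whole-line extension discussed immediately after its proof via the second factorization \eqref{second-factorization}, the regularity assumption $\lambda^{j}r(\lambda^{-1})\in H^1_{\lambda}$ for $-1\le j\le m+1$ and the smallness of $\|\lambda^{-1}r(1/\lambda)\|_{L^2_{\lambda}}+\|r(1/\lambda)\|_{L^2_{\lambda}}$ guarantee $q_{\HH}(\cdot,t)\in X_m(\RR)$ for each $t\ge 0$. Since $m\ge 1$ yields the embedding $X_m\subset X_1$, we obtain $q_{\HH}(\cdot,t)\in X_1$, which is exactly the input Lemma~\ref{epsilon-lemma} requires.

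With the hypothesis in hand, I would invoke Lemma~\ref{epsilon-lemma} for the frozen $t$: it produces finitely many subintervals on which the contraction constant $M=\sup_{\delta}\|(\langle q_{\HH}(\cdot+\delta,t)\rangle-\langle q_{\HH}(\cdot,t)\rangle)/\delta\|_{L^1}$ is below $1$, a Cauchy sequence $\{\epsilon_k\}$ in $L^\infty$ on each piece, and a gluing-by-uniqueness step delivering a unique monotone increasing bounded continuous $\epsilon(\cdot,t)$ with $\epsilon(-\infty,t)=0$. These subintervals and the constant $M$ may depend on $t$, but this is harmless since the claim is pointwise in $t$ and no uniformity is asserted. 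To recover $x$ I would differentiate $x_{\HH}=x+\epsilon(x,t)$, obtaining $\partial_x x_{\HH}=1+\epsilon'(x,t)=\langle q_{\HH}(x+\epsilon(x,t),t)\rangle\ge 1>0$; hence $x\mapsto x+\epsilon(x,t)$ is strictly increasing and continuous, and the boundedness of $\epsilon$ forces its range to be all of $\RR$, so it is a bijection and $x$ is uniquely determined from $x_{\HH}$.

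I expect no genuine obstacle, since the result is a direct corollary; the only points requiring care are leaning on the second-factorization extension so that $q_{\HH}(\cdot,t)\in X_m$ on the whole line (Lemma~\ref{qH-construct} being stated over $\RR^-$) before quoting $X_m\subset X_1$, and confirming that $\epsilon'(x,t)=\langle q_{\HH}\rangle-1$ is bounded—which holds because $X_1\subset H^1\hookrightarrow L^\infty$—so that $\epsilon(\cdot,t)$ is genuinely continuous and monotone as claimed.
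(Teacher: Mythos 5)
Your proposal is correct and follows the same route the paper intends: the corollary is a direct application of Lemma \ref{epsilon-lemma} to the profile $q_{\HH}(\cdot,t)\in X_m\subset X_1$ furnished (on the whole line, via the second factorization) by Lemma \ref{qH-construct}, applied for each frozen $t$, with the bijectivity of $x\mapsto x+\epsilon(x,t)$ following from $\partial_x x_{\HH}=\langle q_{\HH}\rangle\geq 1$ and the boundedness of $\epsilon$. Your added checks (the $X_m\subset X_1$ embedding and the whole-line extension of Lemma \ref{qH-construct}) are exactly the points the paper leaves implicit.
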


\begin{theorem} \label{construct-q}
Suppose ${\lambda}^{j}r(\lambda^{-1})\in H^1_{\lambda}$ for $-1 \leq j \leq m+1$ with $m\geq 1$ and 
$$\|\lambda^{-1}r(1/\lambda)\|_{L^2_{\lambda}}+\|r(1/\lambda)\|_{L^2_{\lambda}}$$
 is sufficiently small. Let $q_{\HH} \in X_m(dx_{\HH})$ be determined in Lemma \ref{qH-construct}, and let $\epsilon(x,t)$ and $x$ be determined in Corollary \ref{x-construct}. Then,
$$q(x,t)=q_{\HH}\left(x+\epsilon(x,t),t\right)$$
belongs to $X_m(\RR)$ for every $t \geq 0$.
\end{theorem}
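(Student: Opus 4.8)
The plan is to transport the weighted Sobolev regularity of $q_{\HH}$ from the $x_{\HH}$ variable to the $x$ variable through the bijection $x_{\HH}=x+\epsilon(x,t)$ of Corollary \ref{x-construct}. Differentiating the defining relation \eqref{ode-integral-again} gives $\tfrac{dx_{\HH}}{dx}=1+\partial_x\epsilon(x,t)=\langle q_{\HH}(x_{\HH},t)\rangle$, so that on functions $\partial_x=\langle q_{\HH}\rangle\partial_{x_{\HH}}$ and $dx=\langle q_{\HH}\rangle^{-1}dx_{\HH}$. Since $q_{\HH}(\cdot,t)\in X_m$ we have $q_{\HH}(\cdot,t)\in L^1$ (by Cauchy--Schwarz against $\langle x_{\HH}\rangle^{-1}\in L^2$), and the elementary bound $\langle q_{\HH}\rangle-1\leq|q_{\HH}|$ shows that $\epsilon$ is bounded, $0\leq\epsilon(x,t)\leq\|q_{\HH}(\cdot,t)\|_{L^1}=:E(t)<\infty$. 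Hence $|x-x_{\HH}|\leq E(t)$, and therefore $\langle x\rangle\leq(1+E(t))\langle x_{\HH}\rangle$, a comparison of weights whose constant is finite for each fixed $t$.

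For $j=0$ I would change variables directly: using $\langle q_{\HH}\rangle^{-1}\leq 1$ and the weight comparison,
\begin{equation*}
\|\langle x\rangle q\|_{L^2(dx)}^2=\int_{\RR}\langle x\rangle^2|q_{\HH}|^2\langle q_{\HH}\rangle^{-1}dx_{\HH}\leq(1+E(t))^2\|\langle x_{\HH}\rangle q_{\HH}\|_{L^2(dx_{\HH})}^2<\infty .
\end{equation*}
For the derivatives I would iterate $\partial_x=\langle q_{\HH}\rangle\partial_{x_{\HH}}$ to express $\partial_x^jq$, read as a function of $x_{\HH}$, as a finite sum of monomials, each being a bounded coefficient (a product of powers of $\langle q_{\HH}\rangle$, of $\langle q_{\HH}\rangle^{-1}$, and of undifferentiated $q_{\HH},\bar q_{\HH}$) times a product of genuine derivative factors $\partial_{x_{\HH}}^{a_i}q_{\HH}$ or $\partial_{x_{\HH}}^{a_i}\bar q_{\HH}$ with $a_i\geq 1$ and $\sum_i a_i=j$.

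Each monomial is then handled by splitting into $L^\infty$ and weighted $L^2$. The embedding $H^1\hookrightarrow L^\infty$ together with $q_{\HH}\in X_m$ gives $\partial_{x_{\HH}}^a q_{\HH}\in L^\infty$ for $a\leq m-1$, while the definition of $X_m$ gives $\langle x_{\HH}\rangle\partial_{x_{\HH}}^a q_{\HH}\in L^2$ for $a\leq m$. If a monomial has a single derivative factor, its order is $a=j\leq m$ and it is bounded in $L^2(\langle x_{\HH}\rangle^2 dx_{\HH})$ directly; if it has at least two derivative factors, then each satisfies $a_i\leq j-1\leq m-1$, so placing the factor of maximal order in $L^2(\langle x_{\HH}\rangle^2 dx_{\HH})$ and all remaining factors (of order $\leq m-1$) in $L^\infty$, H\"older's inequality and boundedness of the coefficient yield a finite weighted-$L^2$ bound. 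Summing over monomials, changing variables back as in the case $j=0$, and using $\langle x\rangle\leq(1+E(t))\langle x_{\HH}\rangle$, I obtain $\|\langle x\rangle\partial_x^j q\|_{L^2(dx)}<\infty$ for every $1\leq j\leq m$, which proves $q(\cdot,t)\in X_m(\RR)$ for each $t\geq 0$.

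The main obstacle is the structural claim underlying the third paragraph: that in every monomial produced by iterating the nonlinear chain rule, all derivative factors except the one of top order have order at most $j-1\leq m-1$, so that they lie in $L^\infty$ while only the single top-order factor needs the weighted $L^2$ bound. This is precisely what lets the $X_m$ regularity pass through the nonlinear change of variables without loss of derivatives. The remaining care is purely bookkeeping of the $t$-dependence through $E(t)=\|q_{\HH}(\cdot,t)\|_{L^1}$, which is finite for each $t$ by the $X_m$ bound of Lemma \ref{qH-construct} and is therefore harmless for the fixed-$t$ statement.
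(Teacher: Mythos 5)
Your proof is correct and follows essentially the same route as the paper: the change of measure $dx_{\HH}=\langle q_{\HH}\rangle\,dx$, the weight comparison $\langle x\rangle\lesssim\langle x_{\HH}\rangle$ coming from boundedness of $\epsilon$, and the operator identity $\partial_x=\langle q_{\HH}\rangle\partial_{x_{\HH}}$. In fact you are somewhat more explicit than the paper on the one point it glosses over with a ``$\lesssim$'', namely the product-rule bookkeeping showing that iterating $\langle q_{\HH}\rangle\partial_{x_{\HH}}$ only produces lower-order derivative factors (placeable in $L^\infty$ via $H^1\hookrightarrow L^\infty$) alongside a single top-order factor in weighted $L^2$.
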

\begin{proof}
It is fruitful to recall the change of variables, 
$$\int_{\RR}q_{\HH}(x_{\HH},t)dx_{\HH}=\int_{\RR}q_{\HH}(x+\epsilon(x,t),t)\langle q_{\HH}(x+\epsilon(x,t),t)\rangle dx=\int_{\RR} q(x,t)\langle q(x,t)\rangle dx.$$
We have
$$\|q_{\HH}\|_{L^2_{x_{\HH}}}^2=\int_{\RR}|q_{\HH}(x_{\HH},t)|^2dx_{\HH}=\int_{\RR}|q(x,t)|^2\langle q(x,t) \rangle dx \geq \|q\|_{L^2_x}^2,$$
that is, if $q_{\HH}\in L^2_{x_{\HH}}$, then $q \in L^2_x$.

Since $\langle A\rangle \langle B\rangle \geq \langle A+B\rangle$ for $A,B \in \RR$, we have the estimate
$$\langle x_{\HH}\rangle \langle \int_{\RR}\HH dy\rangle \geq \langle x\rangle.$$ 
The weight $\langle x_{\HH}\rangle$ bounds $\langle x\rangle$, as $\langle \int_{\RR}\HH dy\rangle$ is bounded if $q_{\HH}\in X_0(dx_{\HH})$. Then, $q_{\HH}\in L^2_{x_{\HH}}(\langle x_{\HH}\rangle^2dx_{\HH})$ implies $q\in L^2_x(\langle x\rangle^2dx)$.

Since $|q_{\HH}|$ is bounded from Lemma \ref{smallness-m}. Since $\langle q_{\HH}\rangle \partial_{x_{\HH}}=\partial_x$, we have  
$$\|\partial_xq\|_{L^2_x}\leq \|\langle q_{\HH}\rangle \partial_{x_{\HH}}q_{\HH}\|_{L^2_{x_{\HH}}}<\infty.$$
More generally, we have
$$\|\langle x\rangle\partial_x^mq\|_{L^2_x}\lesssim \|\langle q_{\HH}\rangle^m \langle x_{\HH}\rangle\partial_{x_{\HH}}^mq_{\HH}\|_{L^2_{x_{\HH}}}$$
$$\leq \|\langle q_{\HH}\rangle^m\|_{L^{\infty}_{x_{\HH}}} \| \langle x_{\HH}\rangle\partial_{x_{\HH}}^mq_{\HH}\|_{L^2_{x_{\HH}}}<\infty,$$
if $q_{\HH} \in X_m(dx_{\HH})$.
\end{proof}

As summary, we have the following diagram of the scattering and inverse scattering maps, i.e., for $n\geq 2$,

\begin{tikzpicture}[decoration={markings,
mark=at position 0.5 with {\arrow[line width=1pt]{>}}}
]

\path[draw, postaction=decorate] (-3,-1)  node[left]{$X_{n+1}\ni q$}-- (3,-1) node[right]{$\lambda^jr(1/\lambda) \in H^1_{\lambda},\; -1\leq j \leq n$};
\path[draw, postaction=decorate] (3,-1) -- (-3,-2) node[left]{$X_{n-1}\ni q$};
\end{tikzpicture}

The forward arrow follows from Theorem \ref{b-a-regularity}. The inverse arrow follows from Theorem \ref{construct-q}, where we set $m=n-1$.

Suppose that $q_0\in X_{\infty}$ and $\|q_0\|_{X_1}$ is sufficiently small. Let $q(x,t) \in X_{\infty}$ be a solution of the WKI equation for $t\in (0,T)$ for some $T>0$ with an initial data $q(\cdot,0)=q_0$. Theorem \ref{b-a-regularity} implies that the reflection coefficient $r$ satisfies $\lambda^{j}r(1/\lambda)\in H_{\lambda}^1$ for all $j\in \mathbb{N}\cup\{-1\}$. Theorem \ref{construct-q} implies that $q(\cdot,t)\in X_m$ for every $t\geq 0$ for all $m \in \mathbb{N}$. This implies that $T$ is an arbitrary large number. This proves Theorem \ref{main-theorem}.

\section{Soliton Solution} \label{soliton}
In the scattering problem, we have constructed the sectionally analytic matrix function $m(z)$ in \eqref{normalized-m}. We shall simply express components of them as 
$$f^+:= (m^{(+)})_1e^{-\frac{i}{z}\int_x^{\infty}\HH dy}, \quad g^+:= (m^{(+)})_2e^{-\frac{i}{z}\int_{-\infty}^x\HH dy},$$
$$f^-:= (m^{(-)})_1e^{\frac{i}{z}\int_{-\infty}^x\HH dy}, \quad g^-:= (m^{(-)})_2e^{\frac{i}{z}\int_x^{\infty}\HH dy}.$$
and 
$$c(z):=a(-1/z)e^{-\frac{i}{z}\int_x^{\infty}\HH dy}.$$
We simply have the expression
$$m(z) = \left\{ \begin{matrix} \left[ \frac{1}{c(z)}f^+, g^+ \right] & z \in \CC^+ \\
 [f^-,  \frac{1}{\overline{c(\bar{z})}}g^-] & z \in \CC^- 
 \end{matrix} \right.
$$
For simplicity, we derive one soliton solution. The method is easily generalized to the case of $N$-soliton solution. For one soliton case, we suppose that $r(z)=0$, that is, 
\begin{equation}\label{r-zero}
m_+=m_-, \quad z\in \RR
\end{equation}
and $c(z)$ has a simple zero at $z_1$ bounded away from $\RR$. The goal is to recover the potential $q$ that corresponds to the above assumptions.
Since $c(z_1)=0$, then so is $a(-1/z_1)=0$. From $a=\det((\psi_+)_1,(\psi_-)_2)$ in \eqref{a-det}, we see that $(\psi_+)_1$ and $(\psi_-)_2$ are linearly dependent at $\lambda_1=-\frac{1}{z_1}$. One can easily verify that this linear dependence implies 
\begin{equation} \label{relation-1}
f^+(z_1)=\gamma e^{2i\theta(z_1)}g^+(z_1),
\end{equation}
where $\gamma\in \CC$ is some constant and $\theta(z_1)=\frac{1}{z_1}x_{\HH}+\frac{2}{z_1^2}t$. Furthermore, by symmetry of the WKI spectral problem, one can verify that $g^-(\bar{z}_1)=\begin{pmatrix} 0 & -1\\ 1 &0\end{pmatrix} \overline{f^+(z_1)}$ and $f^-(\bar{z}_1)=\begin{pmatrix} 0 & 1\\ -1 &0\end{pmatrix} \overline{g^+(z_1)}$. Applying these relations to \eqref{relation-1}, we have
\begin{equation} \label{relation-2}
g^-(\bar{z}_1)=-\bar{\gamma} e^{-2i\theta(\bar{z}_1)}f^-(\bar{z}_1).
\end{equation}
Let the contour $C^+ \subset \CC^+$ be union of a semi-circle and a small circle that enclose $z_1$ (for the case of $N$-solitons, there are $N$ small circles) as shown below
\begin{figure}[htbp] 
   \centering
 \begin{tikzpicture}[decoration={markings,
mark=at position 0.5 with {\arrow[line width=1pt]{>}},
}
]

\path[draw,postaction=decorate] (-3,0) -- (3,0);
\path[draw, postaction=decorate] (3,0) arc(0:180:3);

\path[draw, postaction=decorate] (2,1) arc(360:0:0.2);

\path[draw] (1.8,0) -- (1.8,0.8) [dashed];

\draw[fill] (1.8,1) circle [radius = 0.02];


\node at (0,2) {$D^+$};
\end{tikzpicture}
\end{figure}

Let $C^-\subset \CC^-$ be the contour that is the reflected contour of $C^+$ with respect to the $x$ axis after reversing orientations. Small clock-wise circle of $C^-$ encloses $\bar{z}_1$. By the Cauchy integral formula, we have
$$\frac{1}{2\pi i}\int_{C^{\pm}}\frac{m(s)-I}{s-z}ds=\left\{ \begin{matrix} m(z)-I & z \in D^{\pm}
\\ 0 & z \in \CC^{\mp} \end{matrix}\right.
$$
We can express 
\begin{equation} \label{cauchy-integral}
m(z)-I=\frac{1}{2\pi i}\left(\int_{C^{+}}+\int_{C^{-}}\right)\frac{m(s)-I}{s-z}ds, \quad z\in D^+\cup D^-.
\end{equation}
In the above equation \eqref{cauchy-integral}, we carry out the following procedures--take the radius of the semi-circle in $C^{\pm}$ to infinity and of the small circle in $C^{\pm}$ to zero, the horizontal line in $C^{\pm}$ to the real line, and apply relations \eqref{r-zero}, \eqref{relation-1}, and \eqref{relation-2}. The resulting equation of equation \eqref{cauchy-integral} is  

$$m(z)=I+\frac{\bar{\beta} e^{-2i\theta(\bar{z}_1)}}{\bar{z}_1-z}m(\bar{z}_1)\begin{pmatrix} 0 & 1 \\ 0 & 0\end{pmatrix}-\frac{\beta e^{2i\theta(z_1)}}{z_1-z}m(z_1)\begin{pmatrix} 0 & 0 \\ 1 & 0\end{pmatrix},$$
where $\beta=\gamma/a'(z_1)$. 
From above, we can solve for $g^+(z_1)$ and $f^-(\bar{z}_1)$. In particular, the first components of $f^-(\bar{z}_1)$ and $g^+(z_1)$ are found as  
$$(f^-(\bar{z}_1))_1=\frac{1}{1+\frac{|\beta|^2}{|z_1-\bar{z}_1|^2}e^{2i\theta(z_1)-2i\theta(\bar{z}_1)}},\quad (g^+(\bar{z}_1))_1=\frac{\bar{\beta}}{\bar{z}_1-z_1}\frac{e^{-2i\theta(\bar{z}_1)}}{1+\frac{|\beta|^2}{|z_1-\bar{z}_1|^2}e^{2i\theta(z_1)-2i\theta(\bar{z}_1)}}.$$
Choose $\frac{|\beta|}{|z_1-\bar{z}_1|}=1$. Also, $\mbox{arg}\beta=0$ for convenience. Also, denote $\lambda_1=\xi+i\eta$ for $z_1=-1/\lambda_1$. The $(1,2)$ th element of $\lim_{z\rightarrow \infty}zm(z)$ is given by 
\begin{align}
m^{(1)}_{12} &=\lim_{z\rightarrow \infty}z(m(z))_{12} \nonumber\\
&=-\bar{\beta}e^{-\theta(\bar{z}_1)}(f^-(\bar{z}_1))_1\nonumber\\
&=-\frac{2\eta}{\xi^2+\eta^2}\frac{1}{e^{-2i\bar{\lambda}_1x_{\HH}+4i\bar{\lambda}_1^2t}+e^{-2i \lambda_1x_{\HH}+4i \lambda_1^2t}}\nonumber \\
&=-\frac{2\eta}{\xi^2+\eta^2}\frac{e^{-i(-2\xi x_{\HH}+4(\xi^2-\eta^2))t}}{e^{-2\eta x_{\HH}+8\xi\eta t}+e^{2\eta x_{\HH}-8\xi\eta t}}. \label{m12}
\end{align}
The $(1,1)$ th element of $\lim_{z\rightarrow \infty}z(m(z)-I)$ is given by
\begin{align}
m^{(1)}_{11} &=\lim_{z\rightarrow \infty}z(m(z)-I)_{11} \nonumber\\
&=-\beta e^{-\theta(z_1)}(g^+(\bar{z}_1))_1\nonumber\\
&=i\frac{2\eta}{\xi^2+\eta^2}\frac{e^{2\eta x_{\HH}-8\xi\eta t}}{e^{-2\eta x_{\HH}+8\xi\eta t}+e^{2\eta x_{\HH}-8\xi\eta t}}. \label{m11}
\end{align}

Differentiating \eqref{m12} in $x_{\HH}$ gives 
$$\partial_{x_{\HH}}m^{(1)}_{12}=2 \frac{2\eta}{\xi^2+\eta^2}e^{-i(-2\xi x_{\HH}+4(\xi^2-\eta^2))t} [\eta \Tanh(*)-i\xi]\sech(*),$$
where $*=2\eta x_{\HH}-8\xi\eta t$. From \eqref{reconstruct-potential}, the potential $q_{\HH}$ is related to $\partial_{x_{\HH}}(m_1)_{12}$ as 
$$q_{\HH}\langle q_{\HH}\rangle^{-1} =\partial_{x_{\HH}}m^{(1)}_{12}.$$ 
Since we have
$$|\partial_{x_{\HH}}m^{(1)}_{12}|^2=\frac{2^2\eta^2}{(\xi^2+\eta^2)^2}[\eta^2\Tanh^2(*)+\xi^2]\sech^2(*),$$
we find
$$|q_{\HH}|^2=\frac{4\eta^2}{\xi^2+\eta^2}\frac{\cosh^2(*)-\frac{\eta^2}{\xi^2+\eta^2}}{(\cosh^2(*)-\frac{2\eta^2}{\xi^2+\eta^2})^2},\quad\langle q_{\HH}\rangle=\frac{\cosh(*)^2}{\cosh^2(*)-\frac{2\eta^2}{\xi^2+\eta^2}}.$$
Expressing $\partial_{x_{\HH}}m^{(1)}_{12}$ as 
$$\partial_{x_{\HH}}m^{(1)}_{12}=2i \frac{2\eta}{\sqrt{\xi^2+\eta^2}}e^{-i(-2\xi x_{\HH}+4(\xi^2-\eta^2))t} \cosh(2\eta x_{\HH}-8\xi\eta t+i\alpha)\sech^2(*)$$
where $\frac{\eta}{\xi}=\tan\alpha$, we obtain
$$q_{x_{\HH}}=\langle q_{x_{\HH}}\rangle \partial_{x_{\HH}}m^{(1)}_{12}=2i\frac{\eta}{\sqrt{\xi^2+\eta^2}}e^{-i(-2\xi x_{\HH}+4(\xi^2-\eta^2))t} \frac{ \cosh(2\eta x_{\HH}-8\xi\eta t+i\alpha)}{\cosh^2(2\eta x_{\HH}-8\xi\eta t)-\frac{2\eta^2}{\xi^2+\eta^2}}.$$
Lastly, recall $x_{\HH}=x+\epsilon(x,t)$ and from the formula \eqref{x-explicit}, we can recover $\epsilon$ as
$$\epsilon(x,t)=i^{-1} \lim_{z\rightarrow \infty}z(m(z)-I)_{11}=i^{-1} m^{(1)}_{11}.$$
We have done this computation in \eqref{m11}, so we obtain the expression
\begin{equation}\label{epsilon} 
\epsilon(x,t)=\frac{\eta}{\xi^2+\eta^2}[\tanh(2\eta(x-4\xi t+\epsilon(x,t)))+1].
\end{equation}
We finally arrived the one soliton solution
$$q(x,t)=2i\frac{\eta}{\sqrt{\xi^2+\eta^2}}e^{-i(-2\xi (x+\epsilon(x,t))+4(\xi^2-\eta^2)t)} \frac{ \cosh(2\eta (x+\epsilon(x,t))-8\xi\eta t+i\alpha)}{\cosh^2(2\eta (x+\epsilon(x,t))-8\xi\eta t)-\frac{2\eta^2}{\xi^2+\eta^2}},$$
where $\epsilon(x,t)$ is found by an implicit equation \eqref{epsilon}. This coincides with one soliton in the original paper \cite{Shimizu-Wadati-1980} after the change of parameter $\lambda \rightarrow -\lambda$ because of our Lax pair with opposite sign for $\lambda$.

\begin{figure}[htbp] 
   \centering
   \includegraphics[width=3.5in]{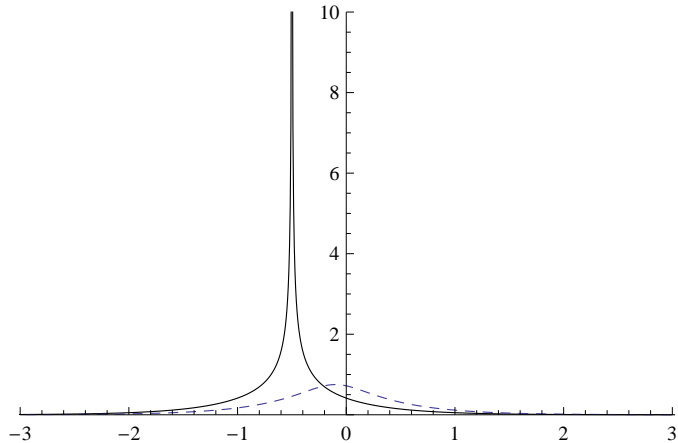} 
   \caption{$|q(x,0)|$ v.s. $x$ :the solid line corresponds to the case $\xi=\eta=1$ and the dashed line corresponds to $\xi=3$ and $\eta=1$.}
   \label{fig:example}
\end{figure}

The case $|\xi|>|\eta|>0$ gives smooth soliton, and the case $|\xi|=|\eta|>0$ gives bursting soliton, whose maximum hight is infinity.

\end{document}